\newtheorem{corollary}{Corollary}[section]
\newtheorem{lemma}{Lemma}[section]
\newtheorem{proposition}{Proposition}[section] 
\newtheorem{definition}{Definition}[section]
\newcommand{\N}{\mathbb{N}}
\newcommand{\R}{\mathbb{R}}
\newtheorem{remark}{Remark}[section]
\newtheorem{thm}{Theorem}
\title{Prescribing Morse scalar curvatures: subcritical blowing-up solutions} 
\author
{Andrea Malchiodi \& Martin Mayer \\ \ \\
Published on Journal of Differential Equations \\
\small Scuola Normale Superiore, Pisa, ITALY \\
\small andrea.malchiodi@sns.it , martin.mayer@sns.it
}  
\begin{document}

\maketitle

\begin{abstract}
\noindent Prescribing conformally the scalar curvature of a Riemannian manifold as a given function 
 consists in 
solving an elliptic PDE involving the critical Sobolev exponent. One way of attacking this 
problem consist in using subcritical approximations for the equation, gaining compactness  
properties.  
Together with the results in \citep{MM1} we completely describe the blow-up 
phenomenon in case of uniformly bounded energy, zero weak limit and positive Yamabe invariant. 
In particular for dimension greater or equal to five and Morse functions with non-zero Laplacian 
at each critical point we show, that subsets of critical points with negative Laplacian 
are in one-to-one correspondence with such subcritical blowing-up solutions. 
{\it Key Words:}
Conformal geometry, sub-critical approximation, blow-up analysis. 
\end{abstract}

\maketitle

\tableofcontents

 \section{Introduction}

Consider a compact manifold $ (M^n, g_{0}) $ with $ n \geq 3 $ and a conformal metric 
 $$ g = u^{\frac{4}{n-2}} g_{0}, \;u > 0. $$ With this notation the scalar curvature transforms 
via
 $$ 
 R_{g_u} u^{\frac{n+2}{n-2}} = L_{g_0} u = - c_{n} \Delta_{g_{0}} u + R_{g_{0}} u, 
 \quad 
c_n = \frac{4(n-1)}{(n-2)}
 $$ 
with $ \Delta_{g_{0}} $ denoting the Laplace-Beltrami operator of $ g_0 $, cf. \cite{aul}. $ L_{g_0} $ is called 
the {\em conformal Laplacian} and transforms according to 
 $$ 
L_g ( \phi) 
= 
u^{-\frac{n+2}{n-2}} L_{g_0}(u\phi)
. $$ 
In the 70 , Kazdan and Warner considered in \cite{kw} the problem of prescribing the 
scalar curvature of manifolds via conformal deformation of the metric, see also \cite{kw1}, \cite{kw2}. 
By the above transformation law, if one wishes to prescribe $ R_g $ as a 
given function $ K $ then would need to solve
\begin{equation}\label{eq_scin}
L_{g_0} u = K u^{\frac{n+2}{n-2}}\quad \hbox{ on } \quad (M, g_0). 
\end{equation}
There are rather easy obstructions to the solvability of \eqref{eq_scin}. For example, if the 
sign of $ K $ is constant, it has to coincide with that of the first eigenvalue of $ L_{g_0} $. 
Depending on the latter sign, which is conformally invariant, a conformal class of metrics is 
said to be of \emph{negative, zero or positive Yamabe class}. We will discuss for simplicity the 
case of function $ K $ with constant sign, despite in the literature there are many interesting 
papers dealing with changing-sign functions. 

In \cite{kw} Kazdan and Warner proved existence results for zero or negative 
Yamabe classes using the sub- and super-solution method. For positive Yamabe class instead, 
they found a now well-known obstruction to existence on the sphere, namely 
that if $ u $ solves\eqref{eq_scin}, then for $ f $ affine on $ S^n $ one must have
\begin{equation}
\label{eq_kw}
\int_{S^{n}}\langle \nabla K, \nabla f \rangle_{g_{S^n}} u^{\frac{2n}{n-2}} \, d\mu_{g_{S^{n}}} = 0, 
\end{equation}
and hence, for 
conformal curvatures $ K $, the function $ \langle \nabla K, \nabla f \rangle_{g_{S^n}} $ must change sign. 

Later on some existence results were found under conditions that would imply {\em topological richness} 
of the sub-levels of $ K $, contrary to the above example. In two dimensions, where \eqref{eq_scin} is 
replaced by an equation in exponential form, J. Moser showed that the problem is solvable on the 
standard sphere if $ K $ is antipodally symmetric. In higher dimensions, existence results under the 
action of symmetry groups were proven in \cite{es} and \cite{hv}, \cite{hv93}. 
\smallskip
A general difficulty in studying \eqref{eq_scin} is the lack of compactness due to the presence 
of the critical exponent. A typical phenomenon encountered here is that of {\em bubbling}. 
{\em Bubbles} are solutions of \eqref{eq_scin} on $ S^{n} $ with $ K\equiv 1 $ and these 
 arise as profiles of general diverging solutions and 
 were classified in \cite{cgs}, see also \cite{auw}, \cite{tal}. From the variational point of 
 view bubbles generate diverging Palais-Smale sequences for the Euler-Lagrange energy of \eqref{eq_scin}
 $$ 
 J(u)
 =
 J_{K}(u) 
 = 
 \frac
 {\int_{M} \left( c_{n}\vert \nabla u\vert_{g_{0}}^{2}+R_{g_{0}}u^{2} \right) d\mu_{g_{0}}}
 {(\int_{M} Ku^{\frac{2n}{n-2}}d\mu_{g_{0}})^{\frac{n-2}{n}}}. 
 $$ 
As seen from a formal expansion of $ J $ on a finite sum of bubbles, cf. the introduction in \cite{MM1}, 
 the mutual interaction among bubbles becomes weaker as $ n $ increases. As a consequence in case $ n=3 $ 
 at most one bubble can form.
Exploiting this and after some work on $ S^2 $ by A. Chang and P. Yang in \cite{[ACPY1]}, \cite{[ACPY2]}, 
A. Bahri and J.M. Coron proved an existence result in \cite{bc} on $ S^3 $ assuming 
that $ K $ is a Morse function and 
\begin{equation}\label{eq:nd}
\{ \nabla K= 0 \} \cap \{ \Delta K = 0 \} = \emptyset; 
\end{equation}
\begin{equation}\label{eq:bcin}
\sum_{\{ x \in M\; : \; \nabla K(x) = 0, \Delta K(x) < 0\}}(-1)^{m(x, K)} \neq -1, 
\end{equation}
where $ m(x, K) $ denotes the Morse index of $ K $ at $ x $, cf.
\cite{[ACGPY]} and \cite{[SZ]} for more general related results. The above existence statement was extended to 
arbitrary dimensions in 
 \cite{yy} for functions satisfying a suitable flatness condition, and in \cite{[ACPY3]}, \cite{agp}, \cite{mal} for functions $ K $ close to 
 a positive constant in the $ C^2 $-sense.

 In four dimensions, see \cite{[BCCH4]} and \cite{yy2}, it was shown that even if multiple bubbles can form, they cannot be too close to each-other; 
such phenomenon is usually refereed to as {\em isolated simple blow-up}.
 Results of different kind were also proven in \cite{[CD]} for $ n = 2 $ 
and in \cite{[BI]}\cite{[BCCHhigh]}, \cite{[BIEG]}, see also Chapter 6 in \cite{aul}.

Two main approaches have been used to understand the blow-up phenomenon, 
namely sub-critical approximations or the construction of pseudo-gradient flows.
In this paper we focus on the former, while the other one will be the subject of 
\cite{MM3}, where a one-to-one correspondence of zero weak limit blowing-up 
solutions with bounded energy and {\em critical points 
at infinity} is shown, see also \cite{MM5}.
Consider the problem 
\begin{equation}\label{eq:scin-tau}
- c_{n} \Delta_{g_{0}} u + R_{g_{0}} u = K \, u^{\frac{n+2}{n-2}-\tau}, \; \quad
0<\tau \ll 1, 
\end{equation}
 which upon rescaling is the Euler-Lagrange equation for the functional
\begin{equation}\label{eq:JKt}
J_{\tau}(u)
=
\frac
{\int_{M} \left( c_{n}\vert \nabla u\vert_{g_{0}}^{2}+R_{g_{0}}u^{2} \right) d\mu_{g_{0}}}
{(\int_M Ku^{p+1}d\mu_{g_{0}})^{\frac{2}{p+1}}}, 
 \quad u \in \mathcal{A}.
\end{equation}
Being now the exponent lower than critical, solutions can be easily found, even though one could lose 
uniform estimates as $ \tau $ tends to zero. In 
\cite{[ACGPY]}, \cite{[SZ]}, \cite{yy} the single-bubbling behaviour for diverging solutions 
of \eqref{eq:scin-tau} was proved. Then by degree- or Morse-theoretical arguments it was 
shown that under \eqref{eq:bcin} there must be families of solutions that stay uniformly bounded, 
therefore converging to solutions of \eqref{eq_scin}. For this argument to work, one crucial step 
was to completely characterize blowing-up solutions of \eqref{eq:scin-tau}, showing that in three dimensions single 
blow-ups occur at any critical point of $ K $ with negative Laplacian and that they are unique. 
On four-dimensional spheres, a similar property was proved in \cite{yy2}for multiple blow-ups, see also \cite{[BCCH4]}, 
assuming a suitable condition related to the multi-bubble interactions.

For Morse functions, if $ n \geq 5 $ the situation is more involved, and blow-ups might be 
possibly of infinite energy, see e.g. \cite{cl1}, \cite{cl2}, \cite{cl3}, \cite{wy}. In \cite{MM1} it was however proved that 
if a sequence of blowing-up solutions has uniformly-bounded $ W^{1, 2} $-energy and 
zero weak limit, then blow-ups are still isolated simple. Although the result is similar to the 
case of dimensions three and four, the phenomenon is somehow opposite since it is 
{\em driven} by the function $ K $ rather than from the mutual bubble interactions. Both assumptions, i.e. 
zero weak limit and bounded energy, are indeed natural. If the former fails then problem \eqref{eq_scin} 
would have a solution; the second one instead is usually found when using min-max or Morse-theoretical 
arguments, as it will be done in \cite{MM4}. 
However, differently from $ n = 3, 4 $, in \cite{MM1} no restriction is proven on the number or 
location of blow-up points, provided they occur at critical points of $ K $ with negative Laplacian. 

\medskip
 
In this paper reshow, that the characterization of the above blow-ups in \cite{MM1} is 
sharp, namely that they can occur at arbitrary subsets of 
 $$ \{ \nabla K = 0\} \cap \{ \Delta K < 0\}. $$ 
Furthermore, we prove uniqueness of such solutions, their non-degeneracy and determine 
their Morse index. Our main result is the following one, that follows from
Theorem 1 in \cite{MM1} and from Proposition \ref{prop_subcritical_existence}, 
 Corollary \ref{cor_restricted_second_variation}. 

\begin{thm}\label{t:ex-multi}
Let $ (M, g) $ be a compact manifold of dimension $ n \geq 5 $ of positive Yamabe invariant and 
let $ K : M \longrightarrow \R $ be a positive Morse function satisfying \eqref{eq:nd}. 
Let $ x_1, \dots, x_q $ be distinct critical points of $ K $ with negative Laplacian. Then there exists, as $ \tau \longrightarrow 0 $ and up to scaling, 
a unique solution $ u_{\tau, x_1, \dots, x_q} $ developing a simple bubble 
at each point $ x_i $ converging weakly to zero in $ W^{1, 2}(M, g) $ as $ \tau \longrightarrow 0 $. 
Moreover and up to scaling $ u_{\tau, x_1, \dots, x_q} $ is non-degenerate for $ J_\tau $ 
and 
 $$ m(J_\tau, u_{\tau, x_1, \dots, x_q}) = (q-1) + \sum_{i=1}^q 
(n-m(K, x_i)). $$ 
Conversely all blow-ups of uniformly bounded energy and zero weak limit type
are as above. 
\end{thm}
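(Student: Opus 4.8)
The plan is to read Theorem~\ref{t:ex-multi} as a synthesis of three ingredients and to assemble them: the existence and uniqueness statement Proposition~\ref{prop_subcritical_existence}, the spectral statement Corollary~\ref{cor_restricted_second_variation}, and the classification of bounded-energy, zero-weak-limit blow-ups of Theorem~1 in \cite{MM1}. For the existence and uniqueness part I would set up a finite-dimensional Lyapunov--Schmidt reduction. Take as approximate solution a sum of $q$ truncated/projected bubbles centered at points $a_i$ near $x_i$ with concentration rates $\lambda_i=\lambda_i(\tau)$, solve the auxiliary equation in the complement of the approximate kernel spanned by the center- and scale-derivatives of the bubbles, and reduce to a finite-dimensional problem in the variables $(a_i,\lambda_i)_{i=1}^q$. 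The assumption $\Delta K(x_i)<0$, together with the nondegeneracy \eqref{eq:nd}, is exactly what makes the $\lambda_i$-balance between the subcritical term carrying $\tau$ and the curvature term solvable with $\lambda_i\to\infty$; the Morse property of $K$ then yields a nondegenerate critical point of the reduced functional, hence uniqueness modulo the exact scaling invariance $J_\tau(su)=J_\tau(u)$ of \eqref{eq:JKt}. Since $\lambda_i\to\infty$ the mass concentrates, so the weak $W^{1,2}(M,g)$-limit is $0$.

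For the nondegeneracy and the Morse index I would invoke Corollary~\ref{cor_restricted_second_variation}, splitting the tangent space into the directions tangent to the bubble manifold (the centers $a_i$ and scales $\lambda_i$) and its orthogonal complement. On the complement one shows that $\partial^2 J_\tau$ is coercive, uniformly in $\tau$, using that the only neutral modes of the linearized bubble operator are precisely the center and scale derivatives that have been separated off. On the finite-dimensional manifold the second variation reduces to the Hessian of the reduced functional. Its center block for the $i$-th bubble is governed by $\mathrm{Hess}\,K(x_i)$: since to leading order lowering $J_\tau$ corresponds to raising $K$ at the center, so that $\partial^2_{a_i}J_\tau$ has the signature of $-\mathrm{Hess}\,K(x_i)$, the number of negative directions equals $n-m(K,x_i)$. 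The scale block, once the single degenerate direction coming from the dilation invariance is removed, contributes $q-1$ negative relative-concentration modes. Summing over $i$ gives the stated index $(q-1)+\sum_{i=1}^q\bigl(n-m(K,x_i)\bigr)$, and nondegeneracy up to scaling follows from the nondegeneracy of the reduced Hessian, itself a consequence of \eqref{eq:nd} and the Morse property.

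For the converse I would appeal directly to Theorem~1 in \cite{MM1}: any family of solutions of \eqref{eq:scin-tau} with uniformly bounded $W^{1,2}$-energy and zero weak limit is an isolated simple blow-up located at critical points of $K$ with negative Laplacian. The uniqueness already proved in Proposition~\ref{prop_subcritical_existence} then forces such a family to coincide, up to scaling, with one of the constructed solutions $u_{\tau,x_1,\dots,x_q}$, which closes the one-to-one correspondence.

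The main obstacle will be the sharp signature computation of the scale block, carried out simultaneously with the uniform coercivity on the orthogonal complement. One has to control the mutual bubble interactions $\varepsilon_{ij}\sim(\lambda_i\lambda_j\,d(a_i,a_j)^2)^{-(n-2)/2}$, which are genuinely dimension-sensitive and require $n\geq 5$ so that they remain of lower order than the self-interaction terms carrying the $\Delta K(x_i)/\lambda_i^2$ versus $\tau\log\lambda_i$ balance; and one must verify that, after these interactions are accounted for, exactly $q-1$ of the relative-scale modes are negative while the one remaining scale mode is the gauge direction of the dilation invariance. Coupling this finite-dimensional signature count with the analytic estimate guaranteeing coercivity on the infinite-dimensional complement, uniformly as $\tau\to 0$, is the delicate point.
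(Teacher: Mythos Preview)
Your overall architecture is exactly the one the paper uses: Lyapunov--Schmidt reduction onto the bubble manifold (Definition~\ref{d:barv}, Lemma~\ref{l:v-bar}), existence and uniqueness on the reduced manifold via a degree argument plus convexity (Proposition~\ref{prop_subcritical_existence}), a block-diagonalisation of $\partial^2 J_\tau$ to read off the Morse index (Proposition~\ref{lem_refined_second_variation}, Corollary~\ref{cor_restricted_second_variation}), and Theorem~1 of \cite{MM1} for the converse. The identification of the center block with $-\mathrm{Hess}\,K(x_i)$, yielding $n-m(K,x_i)$ negative directions per bubble, is also correct.

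There is, however, a genuine misattribution in your index count. You locate the $q-1$ negative directions in the \emph{scale block} (the $\lambda_i$-directions) and say the degenerate direction from dilation invariance sits there too. In the paper's computation the situation is the opposite: the $\lambda$-block $\mathbb{\Lambda}_+$ is \emph{positive-definite} (see the explicit constant $\frac{(n-2)^2\Gamma^2(n/2)}{128\,n\,\Gamma(n+1)}\tau>0$ in the proof of Proposition~\ref{lem_refined_second_variation}), while the $q-1$ negative eigenvalues and the one-dimensional kernel both live in the \emph{weight block} $\mathbb{A}_{q-1,0}$ coming from the $\alpha_i$-directions. The kernel there is generated by $(\alpha_1,\dots,\alpha_q)$ itself, which is exactly the scaling orbit $u\mapsto su$, and the matrix $-\delta_{ij}+\alpha_i\alpha_j/\alpha^2$ (see \eqref{second_derivative_alpha_space}--\eqref{alpha_functional_second_derivative}) is negative semi-definite with a one-dimensional null space, i.e.\ it has signature $(0,1,q-1)$. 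So the mechanism is not ``relative concentration modes'' competing with each other, but rather the concavity of the auxiliary function $f(\alpha)=\alpha^2/(\alpha_K^{2n/(n-2)})^{(n-2)/n}$ on the simplex of weights. If you tried to carry out your proposed signature computation on the $\lambda$-block, you would find the wrong sign and the argument would not close. Once you relocate the $(q-1)$ and the kernel to the $\alpha$-block, the rest of your sketch goes through as in the paper. (A minor side remark: the balance fixing $\lambda_i$ is $\tilde c_1\tau+\tilde c_2\frac{\Delta K(x_i)}{K(x_i)\lambda_i^2}=o(\lambda^{-2})$, not a $\tau\log\lambda_i$ balance.)
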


\medskip

\noindent As it will be shown in \cite{MM4}, for $ n \geq 5 $ there cannot be a direct counterpart of
\eqref{eq:bcin}, which is an index-counting condition. However, existence results 
of different type will be derived there. 

\medskip

 \begin{remark}\label{r:precise} 
 \begin{enumerate}[label=(\roman*)]
\item 
More precise expressions for $ u_{\tau, x_1, \dots, x_q} $ are given by
 \begin{equation*}
\bigg\| u_{m} - \sum_{j=1}^q \alpha_{j, m} \delta_{\lambda_{j, m}, a_{j, m}} \bigg\|_{W^{1, 2}(M, g_0)}
\longrightarrow 0 \quad \text{ as } \quad m \longrightarrow\infty, 
\end{equation*} 
and
 $$ 
\alpha_{j, m}=\frac{\Theta}{K(x_{j})^{\frac{n-2}{4}}}+o(1)
, \quad a_{j, m} \longrightarrow x_j
\quad \text{ and }\quad 
\lambda_{j, m} \simeq \lambda_{\tau_{m}} = \tau^{- \frac 12}_{m}.
 $$ 
Here the multiplicative constant $ \Theta $ depends on the blowing-up 
solutions but it is independent of $ j $. 
For this and more precise formulae we refer to 
 Section \ref{s:ex} and Theorem \ref{lem_top_down_cascade} 
in the Appendix. If $ n = 4 $, the same conclusions hold replacing $ \Delta K (a_j) < 0 $ for all $ j $ with (iv) of Theorem 2 in \cite{MM1}. 
\item 
Although upon scaling the above solutions $ u_{\tau, x_1, \dots, x_q} $ 
are non-degenerate, they Hessian of $ J_\tau $ there has $ \sum_{i=1}^q (n-m(K, x_i)) $ 
eigenvalues approaching zero as $ \tau \longrightarrow 0 $, see Section \ref{s:2nd}. 
\item 
Theorem \ref{t:ex-multi} gives a one-to-one correspondence of zero weak limit subcritical blow-up solutions to subsets of critical points of $ K $ with negative Laplacian, while in \cite{MM3} this correspondence is shown with zero weak limits, i.e. pure critical points at infinity of energy decreasing type, cf. \cite{bab}, \cite{may-cv}.
\end{enumerate}
\end{remark}

\noindent The proof of Theorem \ref{t:ex-multi} relies on the estimates in \cite{MM1} and 
a finite dimensional reduction, see e.g. \cite{am}, with a careful asymptotic 
analysis. In dimension four this approach was used in Section 2 of \cite{yy2}.
Here we show that in higher dimensions blow-up might occur at arbitrary 
critical points of $ K $ with negative Laplacian, which affects the global structure 
of the solutions of problem \eqref{eq_scin}. Via careful expansions, we also 
determine the Hessian of the Euler-Lagrange functional and the Morse index of these 
solutions, which we prove to be non-degenerate.

The solutions we consider here lie in a set $ V(q, \varepsilon)\subset W^{1, 2}(M, g_0) $, which contains a manifold of approximate 
solutions for \eqref{eq:scin-tau}, namely 
 $$ \sum_{i=1}^{q} \alpha^i \varphi_{a_{i}, \lambda_{i}}, $$ and is transversally non-degenerate, Section \ref{s:prel} for notation. This allows 
to solve \eqref{eq:scin-tau} orthogonally to this manifold via a proper transversal correction to 
the approximate solutions, see Definition \ref{d:barv} and Lemma \ref{l:v-bar}, 
and reduce to the study of the tangent component. By Theorem \ref{lem_top_down_cascade} from \cite{MM1}
we can reduce ourselves to a smaller set $ \bar V(q, \varepsilon) $, see \eqref{def_refined_neighbourhood}, 
where more precise estimates hold for the gradient of $ J_\tau $. These allow us to 
use an orthogonal correction $ \bar v $ small in size, solve also for the tangent component and to estimate the second differential of 
 $ J_\tau $ at $ \sum_{i=1}^{q} \alpha^i \varphi_{a_{i}, \lambda_{i}} + \bar v $, see Section \ref{s:2nd}. 
Finally this allows in turn to compute the Morse index of the solutions $ u_{\tau, x_1, \dots, x_q} $ 
and to prove their uniqueness. In this step we show that even though the correction $ \bar v $ is of the 
same order as some eigenvalues of $ \partial ^{2}J_\tau $ some cancellation occur in the corresponding
estimates.

\

The plan of the paper is the following. In Section \ref{s:prel} we collect some preliminary material 
concerning approximate solutions and the finite-dimensional reduction of the problem, which is 
then worked-out in detail in Section \ref{s:ex}. In Section \ref{s:2nd} we study the Hessian 
of the Euler-Lagrange functional $ J_\tau $ in $ \bar V(q, \varepsilon) $, finding a proper base 
with respect to which the Hessian nearly diagonalizes. Finally we collect in an Appendix some useful and 
technical estimates from \cite{MM1} and a table of constants.

 \

\noindent {\bf Acknowledgements.}
A.M. has been supported by the project {\em Geometric Variational Problems} and {\em Finanziamento a supporto della ricerca di base} from Scuola Normale Superiore and by MIUR Bando PRIN 2015 2015KB9WPT $ _{001} $. He is also member of GNAMPA as part of INdAM.

\section{Preliminaries} \label{s:prel}

In this section we collect some background and preliminary material, concerning the 
variational properties of the problem and some estimates on highly-concentrated 
approximate solutions of bubble type.

We consider a smooth, closed Riemannian manifold 
 $ 
M=(M^{n}, g_{0}) 
 $ 
with volume measure $ \mu_{g_{0}} $ and scalar curvature $ R_{g_{0}} $.
Letting 
 $$ 
\mathcal{A}=
\{
u\in W^{1, 2}(M, g_{0})\mid u\geq 0, u\not \equiv 0
\}
 $$ 
the {\em Yamabe invariant}is defined as 
\begin{equation*}\begin{split}
Y(M, g_{0})
= &
\inf_{\mathcal{A}}
\frac
{\int \left( c_{n}\vert \nabla u \vert_{g_{0}}^{2}+R_{g_{0}}u^{2} \right) d\mu_{g_{0}}}
{(\int u^{\frac{2n}{n-2}}d\mu_{g_{0}})^{\frac{n-2}{n}}}
\quad \text{ with } \quad 
c_{n}=4\frac{n-1}{n-2}
\end{split}\end{equation*}
and it turns out to depend only on the conformal class of $ g_0 $. 
We will assume that this invariant is positive, 
i.e. $ (M, g_0) $ to be of \emph{positive Yamabe class}. 
As a consequence the {\em conformal Laplacian} 
\begin{equation*}\begin{split} 
L_{g_{0}}=-c_{n}\Delta_{g_{0}}+R_{g_{0}}
\end{split}\end{equation*}
 is a positive and self appointed operator. Without loss of generality 
we assume $ R_{g_{0}}>0 $ and denote by
 $$ 
G_{g_{0}}:M\times M \setminus \Delta \longrightarrow\R_{+}
 $$ 
the Green  function of $ L_{g_0} $. 
Considering a conformal metric $$ g=g_{u}=u^{\frac{4}{n-2}}g_{0} $$ 
there holds 
\begin{equation*}\begin{split} d\mu_{g_{u}}=u^{\frac{2n}{n-2}}d\mu_{g_{0}}
\quad \text{ and } \quad 
R=R_{g_{u}}=u^{-\frac{n+2}{n-2}}(-c_{n} \Delta_{g_{0}} u+R_{g_{0}}u) =
u^{-\frac{n+2}{n-2}}L_{g_{0}}u.
\end{split}\end{equation*}
Note that 
\begin{align*}
c\Vert u \Vert_{W^{1, 2}(M, g_0)}
\leq
\int u \, L_{g_{0}}u \, d\mu_{g_{0}}
\leq 
C\Vert u \Vert_{W^{1, 2}(M, g_0)}.
\end{align*}
In particular we may define 
 $$ \Vert u \Vert^2 = \Vert u \Vert_{L_{g_0}}^2 = \int u \, L_{g_{0}}u \, d\mu_{g_{0}} $$ 
and use $ \Vert \cdot \Vert $ as an equivalent norm on $ W^{1, 2}(M, g_0) $. 
Setting
 $$ R=R_{u} \quad \text{ for } \quad g=g_{u}=u^{\frac{4}{n-2}}g_{0} $$ we have 
\begin{equation}\label{eq:r}
r=r_{u}=\int R d\mu_{g_{u}}=\int u L_{g_{0}} ud\mu_{g_{0}}, 
\end{equation}
and hence 
\begin{equation}\label{eq:kp}
J_{\tau}(u)=\frac{r}{k_{\tau}^{\frac{2}{p+1}}}
\; \quad \text{ with } \; \quad k_{\tau} = \int K \, u^{p+1} d \mu_{g_{0}}. 
\end{equation}
The first- and second-order derivatives of the functional $ J_\tau $ are given by 
\begin{equation}\label{first_variation_evaluating} 
\partial J_{\tau}(u)v
= 
\frac{2}{k_{\tau}^{\frac{2}{p+1}}}
\big[\int L_{g_{0}}uvd\mu_{g_{0}}-\frac{r}{k_{\tau}}\int Ku^{p}vd\mu_{g_{0}}\big]; 
\end{equation}
and
\begin{equation}\label{second_variation_evaluating} 
\begin{split}
\partial^{2} J_{\tau}(u) vw
= &
\frac{2}{k_{\tau}^{\frac{2}{p+1}}}
\big[\int L_{g_{0}}vwd\mu_{g_{0}}-p\frac{r}{k_{\tau}}\int Ku^{p-1}vwd\mu_{g_{0}}\big] \\
& -
\frac{4}{k_{\tau}^{\frac{2}{p+1}+1}}
\big[
\int L_{g_{0}}uvd\mu_{g_{0}}\int Ku^{p}wd\mu_{g_{0}}
\\ & \quad\quad\quad\quad\quad\quad +
\int L_{g_{0}}uwd\mu_{g_{0}}\int Ku^{p}vd\mu_{g_{0}}
\big] \\
& +
\frac{2(p+3)r}{k_{\tau}^{\frac{2}{p+1}+2}}
\int Ku^{p}vd\mu_{g_{0}}\int Ku^{p}wd\mu_{g_{0}}.
\end{split}
\end{equation}
In particular $ J_{\tau} $ is of class $ C^{2, \alpha}_{loc}(\mathcal{A}) $ and for $ \varepsilon > 0 $ uniformly H\"older continuous on each
set of the form 
\begin{align*}
U_{\varepsilon}=\{u\in\mathcal{A} \mid\varepsilon<\Vert u \Vert, \, J_{\tau}(u)\leq \varepsilon^{-1}\}.
\end{align*}
To understand the blow-up phenomenon, it is convenient to consider 
some highly concentrated approximate solutions to \eqref{eq_scin}. 
Let us first recall the construction of {\em conformal normal coordinates} from \cite{lp}.
Given $ a \in M $ these are defined as geodesic normal coordinates 
for a suitable conformal metric $ g_{a} \in [g_{0}] $. 
Let $ r_a $ be the geodesic distance from $ a $ with respect to the metric $ g_a $. 
With this choice, the expression of the 
Green  function $ G_{g_{ a }} $ for the conformal 
Laplacian $ L_{g_{a}} $ with pole at $ a \in M $, denoted by 
 $ G_{a}=G_{g_{a}}(a, \cdot) $, simplifies considerably. In Section 6 of \cite{lp}
one can find the expansion 
\begin{equation}\begin{split} \label{eq:exp-G}
G_{ a }=\frac{1}{4n(n-1)\omega _{n}}(r^{2-n}_{a}+H_{ a })
, \; 
H_{ a }=H_{r, a }+H_{s, a }\; \text{ for } \; g_{a}=u_{a}^{\frac{4}{n-2}}g_{0}.
\end{split}\end{equation}
Here 
 $ 
r_{a}=d_{g_{a}}(a, \cdot)
 $ 
and
 $ H_{r, a }\in C^{2, \alpha}_{loc} $, while the {\em singular} error term is of type
\begin{equation*}
\begin{split}
H_{s, a}
=
O
\begin{pmatrix}
r_{a}& \text{ for }\, n=5
\\
\ln r_{a} & \text{ for }\, n=6
\\
r_{a}^{6-n} & \text{ for }\, n\geq 7
\end{pmatrix}.
\end{split}
\end{equation*} 
The leading term in $ H_{s, a} $ for $ n=6 $ is 
 $ \mathbb{W} $ the Weyl tensor.
Define
\begin{equation}\label{eq:bubbles}
\varphi_{a, \lambda }
= 
u_{ a }\left(\frac{\lambda}{1+\lambda^{2} \gamma_{n}G^{\frac{2}{2-n}}_{ a }}\right)^{\frac{n-2}{2}}, \quad 
G_{ a }=G_{g_{ a }}( a, \cdot)
\end{equation}
for $ \lambda > 0 $ large, where $ \gamma_{n}=(4n(n-1)\omega _{n})^{\frac{2}{2-n}} $ is chosen so that 
\begin{equation}
\gamma_{n}G^{\frac{2}{2-n}}_{ a }(x) 
= 
d_{g_a}^2(a, x) + o(d_{g_a}^2(a, x))
\; \text{ as } \; 
x \longrightarrow a.
\end{equation}
Such functions are approximate solutions of \eqref{eq_scin}, see Lemma \ref{lem_emergence_of_the_regular_part}, 
and for suitable values of $ \lambda $ depending on $ \tau $ these are also approximate solutions 
of \eqref{eq:scin-tau}, see Lemma \ref{lem_upper_bound} for a multi-bubble version. 

\
\noindent {\bf{Notation.}} For $ p \geq 1 $, $ L^p_{g_0} $ will stand for
the family of functions of class $ L^p $ with 
respect to the measure $ d \mu_{g_{0}} $. 
Recall also that for $ u \in W^{1, 2}(M, g_0) $ we have set $ r_u = \int u L_{g_0} u d \mu_{g_{0}} $, while for 
 $ a \in M $ we denote by $ r_a $ the geodesic distance from $ a $ with respect to the conformal metric $ g_a $ 
introduced before. 
For a finite set of points $ \{a_i\}_i $ of $ M $ we will denote by $ K_{i}, \nabla K_i, W_{i} $ the quantities 
 $ K(a_{i}), \nabla K(a_{i}), |\mathbb{W}(a_i)|^2 $ etc..
\\ \\
\noindent
For $ k, l=1, 2, 3 $ and $ \lambda_{i} >0, \, a _{i}\in M, \, i= 1, \ldots, q $ let
\begin{enumerate}[label=(\roman*)]
 \item \quad 
 $ \varphi_{i}=\varphi_{a_{i}, \lambda_{i}} $ and $ (d_{1, i}, d_{2, i}, d_{3, i})=(1, -\lambda_{i}\partial_{\lambda_{i}}, \frac{1}{\lambda_{i}}\nabla_{a_{i}}); $ 
 \item \quad
 $ \phi_{1, i}=\varphi_{i}, \;\phi_{2, i}=-\lambda_{i} \partial_{\lambda_{i}}\varphi_{i}, \;\phi_{3, i}= \frac{1}{\lambda_{i}} \nabla_{ a _{i}}\varphi_{i} $, so
 $ 
\phi_{k, i}=d_{k, i}\varphi_{i}. 
 $ 
\end{enumerate}
Note, that the $ \phi_{k, i} $ are uniformly bounded in $ W^{1, 2}(M, g_0) $ for any
$ \lambda_{i} >0$.

\

We next recall a standard finite-dimensional 
reduction for functions that are close in $ W^{1, 2} $ to a finite sum of bubbles, wherefore we define
\begin{equation}\begin{split} \label{eq:eijm}
 \varepsilon_{i, j}
 =
 \bigg(
 \frac{\lambda_{j}}{\lambda_{i}}
 +
 \frac{\lambda_{i}}{\lambda_{j}}
 +
 \lambda_{i}\lambda_{j}\gamma_{n}G_{g_{0}}^{\frac{2}{2-n}}(a _{i}, a _{j})
 \bigg)^{\frac{2-n}{2}}.
 \end{split}\end{equation}
 Given $ \varepsilon>0, \; q\in \N, \; u\in W^{1, 2}(M, g_{0}) $ and $ ( \alpha^{i}, \lambda_{i}, a_{i})\in (\R^{q}_{+}, \R^{q}_{+}, M^{q}) $, we set 
\vspace{16pt}
\begin{enumerate}[label=(\roman*)]
\item \quad 
\vspace{-31pt}
\begin{equation*}
\begin{split}
A_{u}(q, \varepsilon)
 =
 \{ 
 ( \alpha^{i}, \lambda_{i}, a_{i}) \mid 
\;
 \underset{i\neq j}{\forall\;}\; 
\lambda_{i}^{-1}, \lambda_{j}^{-1}, 
&
\varepsilon_{i, j}, \bigg\vert 1-\frac{r\alpha_{i}^{\frac{4}{n-2}}K(a_{i})}{4n(n-1)k_{\tau}}\bigg\vert, 
\\ & 
\Vert u-\alpha^{i}\varphi_{a_{i}, 
\lambda_{i}}\Vert
<
\varepsilon, 
\lambda_{i}^\tau 
< 
1 + \varepsilon
 \};
\end{split}
\end{equation*}
\item \quad
 $ 
 V(q, \varepsilon)
 = 
 \{
 u\in W^{1, 2}(M, g_{0})
 \mid
 A_{u}(q, \varepsilon)\neq \emptyset
 \}, 
 $ 
 \end{enumerate}
 see\eqref{eq:r}, \eqref{eq:kp} and \eqref{eq:bubbles}. For 
 $ A_{u}(q, \varepsilon) $ to be non-empty we 
 will always assume that $ \tau \ll \varepsilon $. 
Under the above conditions on the parameters $ \alpha_{i}, a_i $ and $ \lambda_{i} $ the functions 
 $ \sum_{i=1}^q \alpha^{i}\varphi_{a_{i}, \lambda_{i}} $ constitute a smooth manifold in $ W^{1, 2}(M, g_0) $, 
which implies the following well known result, cf. \cite{bab}.

 \begin{proposition} 
 \label{prop_optimal_choice} 
 Given $ \varepsilon_{0}>0 $ there exists $ \varepsilon_{1}>0 $ such  that for $ u\in V( q, \varepsilon) $ 
 with $ \varepsilon<\varepsilon_{1} $, the problem 
 \begin{equation*}\begin{split}
 \inf_
 {
 (\tilde\alpha_{i}, \tilde a_{i}, \tilde\lambda_{i})\in A_{u}(q, 2\varepsilon_{0}) 
 }
 \int 
 (
 u
 -
 \tilde\alpha^{i}\varphi_{\tilde a_{i}, \tilde \lambda_{i}}
 ) L_{g_{0}}
 (
 u
 -
 \tilde\alpha^{i}\varphi_{\tilde a_{i}, \tilde \lambda_{i}}
 )
 d\mu_{g_{0}}
 \end{split}\end{equation*}
 admits an unique minimizer $ (\alpha_{i}, a_{i}, \lambda_{i})\in A_{u}(q, \varepsilon_{0}) $ and we set 
 \begin{equation}\begin{split} \label{eq:v}
 \varphi_{i}=\varphi_{a_{i}, \lambda_{i}}, \quad v=u-\alpha^{i}\varphi_{i}, \quad K_{i}=K(a_{i}).
 \end{split}\end{equation}
 Moreover $ (\alpha_{i}, a_{i}, \lambda_{i}) $ depends smoothly on $ u $.
 \end{proposition}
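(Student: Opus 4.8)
The plan is to prove Proposition~\ref{prop_optimal_choice} as a quantitative implicit-function-theorem statement, exploiting that the map $(\tilde\alpha_i,\tilde a_i,\tilde\lambda_i)\mapsto\sum_i\tilde\alpha^i\varphi_{\tilde a_i,\tilde\lambda_i}$ parametrizes a smooth finite-dimensional manifold $\Sigma\subset W^{1,2}(M,g_0)$ whose tangent space at a point is spanned by the functions $\phi_{k,i}$ defined in the preliminaries. The existence of a minimizer is the easy part: for $u\in V(q,\varepsilon)$ the feasible set $A_u(q,2\varepsilon_0)$ is nonempty by definition, the constraints defining $A_u(q,2\varepsilon_0)$ cut out a compact region once one checks that minimizing sequences cannot have $\lambda_i\to\infty$, $\varepsilon_{i,j}\to 0$ too fast, or $a_i$ escaping, and the functional $(\tilde\alpha,\tilde a,\tilde\lambda)\mapsto\|u-\tilde\alpha^i\varphi_{\tilde a_i,\tilde\lambda_i}\|^2$ is continuous; hence a minimizer exists and, by the strict inequalities in the definition of $A_u(q,\varepsilon)$, it lies in the smaller set $A_u(q,\varepsilon_0)$ rather than on its boundary for $\varepsilon$ small.

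The heart of the matter is uniqueness together with smooth dependence, and I would obtain both simultaneously from a nondegeneracy computation. First I would write the Euler--Lagrange (criticality) conditions for the minimization: at a minimizer the residual $v=u-\alpha^i\varphi_i$ must be $L_{g_0}$-orthogonal to the tangent space of $\Sigma$, i.e.\ $\langle v,\phi_{k,i}\rangle_{L_{g_0}}=0$ for all $k=1,2,3$ and $i=1,\dots,q$. The plan is then to set up the full system of critical-point equations $F(u;\alpha,a,\lambda)=0$, where $F$ collects these orthogonality conditions, and to show that the differential of $F$ in the $(\alpha,a,\lambda)$ variables is invertible near a sum of bubbles. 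This reduces to computing the Gram-type matrix of the $\phi_{k,i}$ in the $L_{g_0}$ inner product and verifying it is (uniformly) nondegenerate. Here I would use the standard fact, implicit in the bubble constructions of \cite{bab}, that distinct bubbles become asymptotically $L_{g_0}$-orthogonal as $\varepsilon_{i,j}\to 0$ and that within a single bubble the three families $\phi_{1,i},\phi_{2,i},\phi_{3,i}$ (the bubble, its $\lambda$-derivative, and its spatial gradient) are linearly independent with controlled mutual inner products. Since the constraint $|1-\tfrac{r\alpha_i^{4/(n-2)}K(a_i)}{4n(n-1)k_\tau}|<\varepsilon$ pins down $\alpha_i$ in terms of the other data, the effective Jacobian is block-diagonal up to lower-order interaction terms of size $\varepsilon_{i,j}$, hence invertible for $\varepsilon$ small.

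The main obstacle, as I see it, is making the nondegeneracy uniform in $\lambda_i$ as the bubbles concentrate. The inner products $\langle\phi_{k,i},\phi_{l,i}\rangle_{L_{g_0}}$ must be computed to leading order in $\lambda_i$, and one needs the resulting reduced Hessian to stay bounded away from singularity even as $\lambda_i\to\infty$; this is where the conformal normal coordinates and the expansion \eqref{eq:exp-G} of the Green's function, together with the nondegeneracy of the standard bubble on $S^n$, enter decisively. Once the Jacobian is shown invertible uniformly on $A_u(q,\varepsilon_0)$, the implicit function theorem delivers a unique local critical point and its smooth dependence on $u$ in one stroke; a separate connectedness/continuation argument on the compact feasible set upgrades local uniqueness to the global uniqueness of the minimizer claimed. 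I would close by remarking that the strict inclusion of the minimizer in the interior $A_u(q,\varepsilon_0)$ guarantees the minimizer is genuinely critical rather than a boundary point, which is what legitimizes the orthogonality relation $v\perp\operatorname{span}\{\phi_{k,i}\}$ used throughout the rest of the paper.
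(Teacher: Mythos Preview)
The paper does not actually prove this proposition: immediately before stating it, the authors remark that the functions $\sum_i\alpha^i\varphi_{a_i,\lambda_i}$ form a smooth manifold in $W^{1,2}(M,g_0)$, call the proposition a ``well known result,'' and cite Bahri's monograph \cite{bab}. Your outline is precisely the standard argument behind that citation: one shows that the Euler--Lagrange conditions $\langle v,\phi_{k,i}\rangle_{L_{g_0}}=0$ define a smooth system whose Jacobian in $(\alpha,a,\lambda)$ is, up to $O(\varepsilon_{i,j})+O(\|v\|)$, the block-diagonal Gram matrix of the $\phi_{k,i}$, which is uniformly nondegenerate by the nondegeneracy of the standard bubble on $\R^n$; then the implicit function theorem gives local uniqueness and smooth dependence, while the strict convexity of $\|u-\alpha^i\varphi_i\|^2$ near any critical point (same Hessian computation) forces global uniqueness of the minimizer in the interior.

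Two small clarifications are in order. First, your remark that the constraint on $\alpha_i$ ``pins it down in terms of the other data'' is tangential: the block-diagonal structure of the Jacobian comes entirely from the smallness of the bubble interactions $\varepsilon_{i,j}$, not from that constraint, which merely locates the feasible set. Second, $A_u(q,2\varepsilon_0)$ is an open set, not a compact one; the correct statement is that a minimizing sequence cannot reach $\partial A_u(q,2\varepsilon_0)$ because the objective would then be bounded below by $c\,\varepsilon_0>\varepsilon$, contradicting minimality, so the minimum is attained in the interior $A_u(q,\varepsilon_0)$. Neither point is a genuine gap.
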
 
 
 \medskip
 
\noindent
 The term
 $ v=u-\alpha^{i}\varphi_{i} $ is orthogonal to all
 $ 
 \varphi_{i}, -\lambda_{i}\partial_{\lambda_{i}}\varphi_{i}, \frac{1}{\lambda_{i}}\nabla_{a_{i}}\varphi_{i}, 
 $ with respect to the product 
 \begin{equation*}\begin{split}
 \langle \cdot, \cdot\rangle_{L_{g_{0}}}
 =
 \langle L_{g_{0}}\cdot, \cdot\rangle_{L^{2}_{g_{0}}}. 
 \end{split}\end{equation*}
Finally for $ u\in V( q, \varepsilon) $ let
 \begin{equation} \begin{split} 
 \label{eq:Hu}
 H_{u} = H_{u}( q, \varepsilon)
 =
 \langle 
\varphi_{i}, \lambda_{i}\partial_{\lambda_{i}}\varphi_{i}, \frac{1}{\lambda_{i}}\nabla_{a_{i}}\varphi_{i}
 \rangle
 ^{\perp_{L_{g_{0}}}}.
 \end{split}
 \end{equation}

\section{Existence of subcritical solutions}\label{s:ex}

Theorem\ref{lem_top_down_cascade}, from \cite{MM1}, 
describes in detail the behaviour as $ \tau \longrightarrow 0 $ of blowing-up solutions to \eqref{eq:scin-tau} with 
uniformly bounded energy and zero weak limit in $ V(q, \varepsilon) $, 
providing positive lower bounds on $ \| \partial J_\tau \| $ in a suitable subset of the functional space. 
In view of this, we can restrict our attention
to {\em centers} $ a_{1}, \ldots, a_{q} $ close to \underline{distinct} critical points $ x_1, \dots, x_q $ of $ K $ with negative Laplacian. More precisely
for $ n \geq 6 $ we can assume the  following conditions, which for $ n = 5 $ are slightly modified, 
\begin{enumerate}[label=(\roman*)]
\item \quad $ 
\vert \alpha_{j}-\Theta \sqrt[p-1]{\frac{\lambda_{j}^{\theta}}{K(a_{j})}}\vert <\frac{\epsilon}{\lambda^{3}}; $ 
 \item \quad 
 $ 
 \vert 
 \frac{\bar a_{j}}{\lambda_{j}}
 +
 c_{1}(\nabla^{2} K(x_{j}))^{-}\frac{\nabla \Delta K(x_{j})}{\lambda_{j}^{3}}
 \vert
 \leq \frac{\epsilon}{\lambda^{3}};
 $ 
 \item \quad 
 $ 
 \vert 
 \lambda_{j}^{2}
 +
 c_{2}\frac{\Delta K(x_{j})}{K(x_{j})\tau}
 \vert 
 \leq \frac{\epsilon}{\lambda^{2}}, 
 $ 
\end{enumerate}
 for $ \lambda^{2}=\frac{1}{\tau} $ and some 
 $$ x_{j}\in \{\nabla K=0\}\cap \{\Delta K<0\}
\quad \text{ with } \quad 
x_{i}\neq x_{j} \quad \text{ for } \quad i\neq j. $$ Here
 $ \Theta > 0 $ is uniformly bounded and bounded away from zero and depends on 
 the function in $ V(q, \varepsilon) $, determined in Remark 6.2 of \cite{MM1}. 
We then define a neighbourhood of potential subcritical blowing-up solutions as
\begin{equation}\label{def_refined_neighbourhood}
\bar V(q, \varepsilon)
=
\{
u\in V(q, \varepsilon)\mid \; \text{ (i), (ii) and (iii) above hold true}
\}.
\end{equation}
Indeed from Lemmata \ref{lem_alpha_derivatives_at_infinity}, \ref{lem_lambda_derivatives_at_infinity} and \ref{lem_a_derivatives_at_infinity}it follows, that there 
exists $ \tilde{\epsilon} > 0 $, tending to zero as $ \varepsilon \longrightarrow 0 $, such that 
\begin{equation*}
\vert \partial J_{\tau}(u) \vert \gtrsim \frac{\tilde \epsilon}{\lambda^{3}}\; \text{ for }\;u\in V(q, \varepsilon)\setminus \bar V(q, \varepsilon)\; \text{ with }\; k_{\tau}=1, 
\end{equation*}
so this justifies to look for solutions in $ \bar V(q, \varepsilon) $ only. 
Moreover for 
 $$ \alpha^{i}\varphi_{i}\in \bar V(q, \varepsilon)
\quad \text{ with } \quad 
c<\alpha_{i}<C $$ 
we have the expansion 
\begin{equation}
J_{\tau}(\alpha^{i}\varphi_{i}+v)
=
J_{\tau}(\alpha^{i}\varphi_{i})
+
\partial J_{\tau}(\alpha^{i}\varphi_{i})v
+
\frac{1}{2}\partial^{2}J_{\tau}(\alpha^{i}\varphi_{i})v^{2}
+
O(\Vert v \Vert^{3}). 
\end{equation} 
Recall the uniform positivity of $ \partial^{2}J_{\tau}(\alpha^{i}\varphi_{i}) $ on $ H_{u}(q, \varepsilon) $, 
cf. \eqref{eq:Hu} and \cite{bab}, which justifies the following 
\begin{definition}\label{d:barv}
For $ 
\alpha^{i}\varphi_{i}\in V(q, \varepsilon)
 $
we define $ \bar v $ as the unique solution of the minimization problem
\begin{equation}\label{bar_v_minimization_problem}
J_{\tau}(\alpha^{i}\varphi_{i}+\bar v)=\min_{v\in H_{\alpha^i \varphi_{i}}, \Vert v \Vert <\varepsilon}J_{\tau}(\alpha^{i}\varphi_{i}+v). 
\end{equation}
\end{definition}

\begin{lemma}\label{l:v-bar}
Let $ \bar v $ be as in the above definition. Then 
\begin{enumerate}[label=(\roman*)]
\item for $ \alpha^i \varphi_i \in \bar V (q, \varepsilon) $ there holds $ \Vert \bar v \Vert \lesssim \frac{1}{\lambda^2} \simeq \tau $ ; 
\item if $ u \in V(q, \varepsilon) $ is such that $ \partial J_\tau(u) = 0 $, 
then $ \alpha^i \varphi_i \in \bar V (q, \varepsilon) $ and $ u=\alpha^{i}\varphi_{i}+\bar v $. 
\end{enumerate}
Moreover for $ \alpha^i \varphi_i \in \bar V (q, \varepsilon) $ we have
\begin{equation}\label{gradient_small_o_1/lamdbda^2}
\partial J_{\tau}(\alpha^{i}\varphi_{i}+\bar v)
=
O(\frac{\tilde \epsilon}{\lambda^{3}}), \quad 
\hbox{ where } \tilde{\epsilon} \longrightarrow 0\hbox{ as } \varepsilon \longrightarrow 0. 
\end{equation}
\end{lemma}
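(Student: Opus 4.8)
The plan is to read Lemma \ref{l:v-bar} as a Lyapunov--Schmidt reduction, splitting the gradient of $J_\tau$ into its tangential component along the manifold of bubbles and its normal component in $H_{\alpha^i\varphi_i}$ (see \eqref{eq:Hu}), and to solve the normal equation first. Throughout I write $\Pi$ for the $\langle\cdot,\cdot\rangle_{L_{g_0}}$-orthogonal projection onto $H_{\alpha^i\varphi_i}$. The existence and uniqueness of $\bar v$ rest on the uniform coercivity of $\partial^2 J_\tau(\alpha^i\varphi_i)$ on $H_{\alpha^i\varphi_i}$ recalled before Definition \ref{d:barv}: by continuity of $\partial^2 J_\tau$ this positivity persists on a $\Vert\cdot\Vert$-ball of radius $\varepsilon$, so $v\mapsto J_\tau(\alpha^i\varphi_i+v)$ is strictly convex there and has at most one interior critical point. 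Equivalently I would apply the implicit function theorem to $F(v)=\Pi\,\partial J_\tau(\alpha^i\varphi_i+v)$, whose differential $\partial_v F(0)=\Pi\,\partial^2J_\tau(\alpha^i\varphi_i)|_{H}$ is invertible with uniformly bounded inverse; this produces a unique small solution $\bar v$ of $\Pi\,\partial J_\tau(\alpha^i\varphi_i+\bar v)=0$, which is exactly the first-order condition for \eqref{bar_v_minimization_problem}.

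For (i) I would read the size of $\bar v$ off the same linearization. From $F(\bar v)=0$ and coercivity one gets $\Vert\bar v\Vert\lesssim\Vert F(0)\Vert=\Vert\Pi\,\partial J_\tau(\alpha^i\varphi_i)\Vert$, so the claim reduces to bounding the normal component of the gradient at the uncorrected bubble by $\frac{1}{\lambda^2}$. Using \eqref{first_variation_evaluating} and testing against an arbitrary $v\in H_{\alpha^i\varphi_i}$ with $\Vert v\Vert=1$, the leading bubble interactions and the $\nabla K$-terms are annihilated because $v$ is orthogonal to $\varphi_i,\lambda_i\partial_{\lambda_i}\varphi_i,\frac1{\lambda_i}\nabla_{a_i}\varphi_i$; what survives are the genuine approximation errors coming from the regular part $H_a$ of the Green's function, from the curvature of $K$ near $a_i$, and from the subcritical shift $\tau$, all of which are $O(\frac1{\lambda^2})$ by Lemmata \ref{lem_emergence_of_the_regular_part} and \ref{lem_upper_bound} together with $\lambda^2=1/\tau$. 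This yields $\Vert\bar v\Vert\lesssim\frac1{\lambda^2}\simeq\tau$, which a posteriori confirms that $\bar v$ is interior.

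For (ii) let $u\in V(q,\varepsilon)$ satisfy $\partial J_\tau(u)=0$; all statements are understood up to the scaling freedom of $J_\tau$, which lets me normalize $k_\tau=1$. By Proposition \ref{prop_optimal_choice} I write $u=\alpha^i\varphi_i+v$ with $v\in H_{\alpha^i\varphi_i}$. Projecting $\partial J_\tau(u)=0$ onto $H_{\alpha^i\varphi_i}$ gives $\Pi\,\partial J_\tau(\alpha^i\varphi_i+v)=0$, i.e. $v$ solves the same normal equation as $\bar v$; by the uniqueness above, $v=\bar v$ and hence $u=\alpha^i\varphi_i+\bar v$. That $\alpha^i\varphi_i\in\bar V(q,\varepsilon)$ then follows from the lower bound displayed before Definition \ref{d:barv}: since $|\partial J_\tau(u)|\gtrsim\frac{\tilde\epsilon}{\lambda^3}$ on $V(q,\varepsilon)\setminus\bar V(q,\varepsilon)$, a critical point cannot lie there, so $u\in\bar V(q,\varepsilon)$.

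Finally, for \eqref{gradient_small_o_1/lamdbda^2}, the normal equation $\Pi\,\partial J_\tau(\alpha^i\varphi_i+\bar v)=0$ leaves only the tangential part of the gradient, so it suffices to bound $\partial J_\tau(\alpha^i\varphi_i+\bar v)\phi_{k,i}$ for the generators $\phi_{k,i}$. A second-order Taylor expansion gives
\[
\partial J_\tau(\alpha^i\varphi_i+\bar v)\phi_{k,i}
=\partial J_\tau(\alpha^i\varphi_i)\phi_{k,i}
+\partial^2 J_\tau(\alpha^i\varphi_i)(\bar v,\phi_{k,i})
+R_{k,i},
\]
with $R_{k,i}$ higher order by the uniform H\"older continuity of $\partial^2 J_\tau$. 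The first term is controlled at the level $\frac{\tilde\epsilon}{\lambda^3}$ by the conditions (i)--(iii) defining $\bar V(q,\varepsilon)$, i.e. by the expansions behind \eqref{def_refined_neighbourhood} and Theorem \ref{lem_top_down_cascade}. The main obstacle is the cross term, which a crude bound would make $O(\Vert\bar v\Vert)=O(\frac1{\lambda^2})$, too large by a factor $\lambda$. The resolution is the cancellation flagged in the introduction: by \eqref{second_variation_evaluating} the leading contribution to $\partial^2 J_\tau(\alpha^i\varphi_i)(\bar v,\phi_{k,i})$ is $\int L_{g_0}\bar v\,\phi_{k,i}\,d\mu_{g_0}=\langle\bar v,\phi_{k,i}\rangle_{L_{g_0}}$, which vanishes identically since $\bar v\in H_{\alpha^i\varphi_i}$. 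What remains are the weighted nonlinear terms $\int K(\alpha^j\varphi_j)^{p-1}\bar v\,\phi_{k,i}\,d\mu_{g_0}$ and the products of first-order pairings in \eqref{second_variation_evaluating}; estimating these — using $\Vert\bar v\Vert\lesssim\frac1{\lambda^2}$, the orthogonality of $\bar v$, and the concentration of the bubbles — is the computational heart of the argument and delivers the remaining $O(\frac{\tilde\epsilon}{\lambda^3})$.
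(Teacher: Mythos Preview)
Your overall strategy is exactly the paper's: contraction/IFT on $H_{\alpha^i\varphi_i}$ to define and bound $\bar v$, uniqueness for (ii), and a tangential expansion for \eqref{gradient_small_o_1/lamdbda^2}. Two points in your last paragraph, however, need more than what you indicate.

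First, the nonlinear cross-term $\int K(\alpha^j\varphi_j)^{p-1}\bar v\,\phi_{k,i}\,d\mu_{g_0}$ is \emph{not} small merely because $\Vert\bar v\Vert\lesssim\lambda^{-2}$: a crude H\"older bound still leaves it at $O(\lambda^{-2})$. The mechanism the paper uses (phrased there as ``$\partial^2 J_\tau\,\phi_{k,i}=o(1/\lambda)$'') is a \emph{second} cancellation coming from Lemma~\ref{lem_emergence_of_the_regular_part}: differentiating $L_{g_0}\varphi_i=4n(n-1)\varphi_i^{\frac{n+2}{n-2}}+o(1)$ gives $L_{g_0}\phi_{k,i}=4n(n-1)\tfrac{n+2}{n-2}\varphi_i^{\frac{4}{n-2}}\phi_{k,i}+o(1)$ in $W^{-1,2}$, so the leading part of the nonlinear term is again proportional to $\langle\bar v,\phi_{k,i}\rangle_{L_{g_0}}=0$, and only an $o(1/\lambda)\cdot\Vert\bar v\Vert=o(\lambda^{-3})$ error survives. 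You gesture at ``orthogonality'' here but should name this step.

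Second, controlling $R_{k,i}$ by ``uniform H\"older continuity of $\partial^2 J_\tau$'' alone gives $R_{k,i}=O(\Vert\bar v\Vert^{1+\min(1,p-1)})$, and since $p-1=\tfrac{4}{n-2}-\tau$ this is only $O(\lambda^{-3})$ or worse once $n\ge 10$. The paper instead bounds the full increment $\partial^2 J_\tau(\alpha^i\varphi_i+\theta\bar v)(\bar v,\phi_{k,j})$ for all $\theta\in(0,1)$ by splitting the domain: on $\{|\bar v|\le\tfrac12\alpha^i\varphi_i\}$ the mean-value bound $|(\cdot)^{p-1}-(\cdot)^{p-1}|\lesssim\varphi_i^{p-2}|\bar v|$ yields a contribution $\lesssim\Vert\bar v\Vert^2$, while on the complement one uses $|\bar v|^{p-1}$ and $\varphi_i\lesssim|\bar v|$ to get $\lesssim\Vert\bar v\Vert^{p+1}$; both are $o(\lambda^{-3})$ in every dimension.
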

\begin{proof}
We first justify Definition \ref{d:barv}, which amounts to solving in a unique way
\begin{equation}\label{bar_v_to_solve}
\Pi_{H_{\alpha^i \varphi_i}} \partial 
J_\tau(\alpha^i \varphi_i + \bar v) = 0
\end{equation}
denoting by 
$$ 
\Pi_{H_{\alpha^i \varphi_i}} 
\hat{=} 
\quad \text{ the projection onto } \quad  
H_{\alpha^i \varphi_i}.
$$ 
Equivalently, 
as
 $ \partial^2 J_\tau $ 
is invertible on the latter subspace, 
\begin{equation*}
\begin{split}
\bar{v} 
= 
- 
&
(H_{\alpha^i \varphi_i} \partial^2 J_\tau (\alpha^i \varphi_i))^{-1} 
\\ & 
\left[ 
\partial J_\tau(\alpha^i \varphi_i) + 
\left( \partial 
J_\tau(\alpha^i \varphi_i + \bar v) - \partial J_\tau(\alpha^i \varphi_i) 
- \partial^2 J_\tau (\alpha^i \varphi_i) \bar v \right) \right]. 
\end{split}
\end{equation*}
Note, that by Lemma \ref{lem_upper_bound} and for $ \alpha^i \varphi_i \in \bar V (q, \varepsilon) $ 
we have 
$$
\|\partial J_\tau(\alpha^i \varphi_i) \| 
\lesssim 
\frac{1}{\lambda^2}.$$
Moreover 
\begin{equation*}
J_\tau(\alpha^i \varphi_i + \bar v) 
-
\partial J_\tau(\alpha^i \varphi_i) 
-
\partial^2 J_\tau (\alpha^i \varphi_i) \bar v
=
o(\Vert \bar v \Vert)
\end{equation*}
 by H\"older  continuity. Hence
we may use a contraction argument in a ball 
 $$ 
B_{\frac{C}{\lambda^{2}}}(0)\subset H_{\alpha^{i}\varphi_{i}}
 $$ 
to obtain
existence of some $ \bar v $ solving \eqref{bar_v_to_solve} andsatisfyingestimate $ (i) $. Uniqueness follows from the aforementioned invertibility. Hence we have justified Definition \ref{d:barv}. We are left with proving (ii).

\

\noindent
By the definition of $ \bar v $ and the above contraction argument we have that 
\begin{equation}\label{bar_v_solution_to}
\partial^{2}J_{\tau}(\alpha^{i}\varphi_{i})\bar v
=
-\partial J_{\tau}(\alpha^{i}\varphi_{i})+o(\frac{1}{\lambda^2})
\; \text{ on }\; \langle \phi_{k, i}\rangle^{\perp_{L_{g_{0}}}}.
\end{equation}
Testing thus $ \partial J_{\tau}(\alpha^{i}\varphi_{i}) $ on $ \langle \phi_{k, i}\rangle $, we find from Lemmata
\ref{lem_alpha_derivatives_at_infinity}, \ref{lem_lambda_derivatives_at_infinity} and \ref{lem_a_derivatives_at_infinity}
\begin{equation*}
\vert \partial J_{\tau}(\alpha^{i}\varphi_{i})\phi_{k, i}\vert \leq \frac{\tilde \epsilon}{\lambda^{3}}
\quad \text{ for } \quad \alpha^i \varphi_i \in \bar V (q, \varepsilon). 
\end{equation*}
It is easy to see from \eqref{second_variation_evaluating} and Lemma \ref{lem_emergence_of_the_regular_part} that $ \partial^2 J_\tau \phi_{k, i} = o(\frac{1}{\lambda}) $, and 
since $ \| \bar v \| \lesssim \frac{1}{\lambda^2} $ we have that 
\begin{equation}\label{eq:claim-orth}
\partial^{2} J_{\tau}(\alpha^{i}\varphi_{i})\bar v\phi_{k, i}=o(\frac{1}{\lambda^{3}}), 
\end{equation}
More generally we also finds, that for any $ \theta \in (0, 1) $
\begin{equation*}
\partial^{2}J(\alpha^{i}\varphi_{i}+\theta \bar v)\bar v \phi_{k, j}
=
o(\frac{1}{\lambda^{3}}).
\end{equation*}
To see this, since $ \bar v\in \langle \phi_{k, i}\rangle^{\perp_{L_{g_{0}}}} $, recalling \eqref{second_variation_evaluating} 
it is sufficient to show that 
 $$ 
\int K(\alpha^{i}\varphi_{i}+ \theta \bar v)^{p-1} \bar v \varphi_{j}d\mu_{g_{0}} - \int K(\alpha^{i}\varphi_{i})^{p-1} \bar v \varphi_{j} d\mu_{g_{0}} = O(\frac{1}{\lambda^3}). 
 $$ 
This in return can be verified by dividing the domain of integration into $ \{ |\bar v| \leq \alpha^{i}\varphi_{i} \} $ 
and its complementary set, using H\"older  inequality and the fact that 
 $ \Vert \bar v \Vert \lesssim \frac{1}{\lambda^2} $.
Consequently 
\begin{equation*} 
\partial J_{\tau}(\alpha^{i}\varphi_{i}+\bar v)
=
\partial J_{\tau}(\alpha^{i}\varphi_{i}+\bar v)\lfloor_{\langle \phi_{k, i}\rangle }
=
\partial J_{\tau}(\alpha^{i}\varphi_{i})\lfloor_{\langle \phi_{k, i}\rangle }
+
o(\frac{1}{\lambda^{3}})
=
O(\frac{\tilde \epsilon}{\lambda^{3}}), 
\end{equation*}
where $ \tilde{\epsilon} $ tends to zero as $ \varepsilon $ does. 
Finally, if a solution $ \partial J_{\tau}(u)=0 $ exists on $ V(q, \varepsilon) $, then we may write
\begin{equation*}
u=\alpha^{i}\varphi_{i}+\bar v+\tilde v \; \text{ with }\; \tilde v \perp_{L_{g_{0}}}\langle \phi_{k, i}\rangle. 
\end{equation*}
But then
\begin{equation*}
\begin{split}
0
= &
\partial J_{\tau}(\alpha^{i}\varphi_{i}+\bar v+\tilde v)\tilde v
=
\partial J_{\tau}(\alpha^{i}\varphi_{i}+\bar v)\tilde v
+
\partial^{2} J_{\tau}(\alpha^{i}\varphi_{i}+\bar v)\tilde v\tilde v
+
o(\vert\tilde v\vert^{2}), 
\end{split}
\end{equation*}
whence necessarily $ \tilde v=0 $ by uniform positivity  
$$ 
\partial^{2} J_{\tau}(\alpha^{i}\varphi_{i}) 
\quad \text{ on } \quad 
\langle \phi_{k, i}\rangle^{\perp_{L_{g_{0}}}}.
$$
Thus 
\begin{equation*}
\partial J_{\tau}(u)=0 \;\text{ with }\; u \in \bar V(q, \varepsilon) \quad 
\Longrightarrow\quad
u=\alpha^{i}\varphi_{i}+\bar v
\end{equation*}
where
 $ \bar v=\bar v_{\alpha, a, \lambda} $ is
 the unique solution to\eqref{bar_v_minimization_problem}, for which 
 $ \alpha^{i}\varphi_{i}+\bar v \in \bar V(q, \varepsilon) $. 
\end{proof}

\begin{remark}\label{lem_refined_gradient} 
For 
$ \alpha^{i}\varphi_{i}\in \bar V(q, \varepsilon) $ 
and 
$ \Vert \nu \Vert=1 $ 
we have
\begin{equation*}
\begin{split}
\hspace{9pt}& \hspace{-11pt}
\frac{(k_{\tau})_{\alpha^{i}\varphi_{i}}^{\frac{2}{p+1}}}{8n(n-1)}
\partial J_{\tau}(\alpha^{i}\varphi_{i})\nu \\
= & 
-
\alpha^{i} 
\tau 
\underset{B_{\varepsilon}(a_{i})}{\int}
\biggr(
\varphi_{i}^{\frac{n+2}{n-2}}\ln(1+\lambda_{i}^{2}r^{2})^{\frac{n-2}{2}}
-
\frac{\bar c_{1}}{c_{1}}\varphi_{i}^{\frac{n+2}{n-2}} 
+
\frac{2}{n-2}\frac{\tilde c_{1}}{c_{2}}
\varphi_{i}^{\frac{4}{n-2}}\lambda_{i}\partial_{\lambda_{i}}\varphi_{i} 
\biggr)
\nu
d\mu_{g_{0}} 
\\
&
+
\alpha^{i}
\tau 
\underset{B_{\varepsilon}(a_{i})}{\int}
\biggr(
\frac{\tilde c_{1}}{\tilde c_{2}}\frac{ \lambda_{i}^{2}r^{2}}{2n}\varphi_{i}^{\frac{n+2}{n-2}}
 -
\frac{\tilde c_{1}\bar c_{2}}{\tilde c_{2}c_{1}}\varphi_{i}^{\frac{n+2}{n-2}} 
+
\frac{2}{n-2}\frac{\tilde c_{1}}{c_{2}}\varphi_{i}^{\frac{4}{n-2}}\lambda_{i}\partial_{\lambda_{i}}\varphi_{i}
\biggr)\nu d\mu_{g_{0}}
\\
& -
\alpha^{i}
\int_{B_{\varepsilon}(a_{i})}
(
\frac{\nabla^{2}_{k, l}K_{i}}{2K_{i}}
x^{k}x^{l}
-
\frac{\Delta K_{i}}{2nK_{i}}r^{2}
)
\varphi_{i}^{\frac{n+2}{n-2}}\nu d\mu_{g_{0}} + o(\frac{1}{\lambda^{2}}), 
\end{split} 
\end{equation*}
referring to the table at the end of the paper for the definition of the constants. 
As a consequence of these formulae
one can prove that $ \bar v $ is indeed of order $ \frac{1}{\lambda^2} $ and not smaller, 
as well as determine the leading order in its expansion. In any case due to some cancellation 
properties this will not substantially affect the eigenvalues of the Hessian of $ J_\tau $ 
at $ \alpha^{i}\varphi_{i} + \bar v $, which we estimate in the next section. 
\end{remark}

\medskip

Let us set
 $
(d_{1, i}, d_{2, i}, d_{3, i})=(1, -\lambda_{i}\partial_{\lambda_{i}}, \frac{1}{\lambda_{i}}\nabla_{a_{i}})
 $
for $ i= 1, \ldots, q $. 

\begin{lemma}
\label{lem_bar_v_estimates}
For $ u=\alpha^{i}\varphi_{i}+\bar v\in \bar V(q, \varepsilon) $ there holds
\begin{equation*}
\Vert \bar v \Vert, \Vert d_{l, j}\bar v\Vert=O(\frac{1}{\lambda^{2}}).
\end{equation*}
\end{lemma}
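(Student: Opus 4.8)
The first estimate, $\|\bar v\| = O(1/\lambda^2)$, is already contained in part $(i)$ of Lemma~\ref{l:v-bar}; since $d_{1,j} = 1$ this also settles the case $l = 1$, so the genuine content is the bound on $d_{2,j}\bar v = -\lambda_j\partial_{\lambda_j}\bar v$ and $d_{3,j}\bar v = \tfrac{1}{\lambda_j}\nabla_{a_j}\bar v$. The plan is to differentiate the equation defining $\bar v$ with respect to the concentration parameters. Recall from the proof of Lemma~\ref{l:v-bar} that $\bar v = \bar v_{\alpha,a,\lambda}$ is produced by a contraction in $H_{\alpha^i\varphi_i}$ and, via the implicit function theorem applied to $\Pi_{H_{\alpha^i\varphi_i}}\partial J_\tau(\alpha^i\varphi_i + \bar v) = 0$ (legitimate since $J_\tau \in C^{2,\alpha}_{loc}(\mathcal{A})$ and $\partial^2 J_\tau$ is uniformly invertible on $H_{\alpha^i\varphi_i}$, cf. \eqref{eq:Hu}), depends in a $C^1$ fashion on $(\alpha^i, a_i, \lambda_i)$. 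Thus $d_{l,j}\bar v$ exists and only its norm must be estimated.

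Applying $d_{l,j}$ to the identity $\Pi_{H_{\alpha^i\varphi_i}}\partial J_\tau(\alpha^i\varphi_i + \bar v) = 0$ and using that the derivative of the gradient map is the Hessian $\partial^2 J_\tau$ (so that no third derivative of $J_\tau$ is needed), one obtains a linear equation for $d_{l,j}\bar v$ of the form
\begin{equation*}
\begin{aligned}
\Pi_{H_{\alpha^i\varphi_i}}\,\partial^2 J_\tau(\alpha^i\varphi_i + \bar v)\,[d_{l,j}\bar v]
= {} & -\,\Pi_{H_{\alpha^i\varphi_i}}\,\partial^2 J_\tau(\alpha^i\varphi_i + \bar v)\,[d_{l,j}(\alpha^k\varphi_k)] \\
& -\,\big(d_{l,j}\,\Pi_{H_{\alpha^i\varphi_i}}\big)\,\partial J_\tau(\alpha^i\varphi_i + \bar v).
\end{aligned}
\end{equation*}
Here $d_{l,j}(\alpha^k\varphi_k) = \alpha^j\phi_{l,j} \in \langle\phi_{k,i}\rangle$ is uniformly bounded in $W^{1,2}$, which is exactly why the scalings in $d_{2,j},d_{3,j}$ are chosen. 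Because $H_{\alpha^i\varphi_i}$ moves with the parameters, $d_{l,j}\bar v$ has a component along $\langle\phi_{k,i}\rangle$; differentiating the orthogonality relations $\langle\bar v, \phi_{k,i}\rangle_{L_{g_0}} = 0$ gives $\langle d_{l,j}\bar v, \phi_{k,i}\rangle_{L_{g_0}} = -\langle\bar v, d_{l,j}\phi_{k,i}\rangle_{L_{g_0}}$, which is $O(1/\lambda^2)$ by $\|\bar v\| \lesssim 1/\lambda^2$ and the boundedness of $d_{l,j}\phi_{k,i}$, so this component is already of the desired order.

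It then remains to estimate the two terms on the right and to invert $\partial^2 J_\tau(\alpha^i\varphi_i + \bar v)$ on $H_{\alpha^i\varphi_i}$, the latter being possible with uniformly bounded inverse by the positivity recalled before Definition~\ref{d:barv} and the smallness of $\bar v$. The moving-projection term is controlled by $\|\partial J_\tau(\alpha^i\varphi_i + \bar v)\| = O(1/\lambda^2)$ from \eqref{gradient_small_o_1/lamdbda^2} together with the bounded rate of change of $\Pi_{H_{\alpha^i\varphi_i}}$. The first term is the delicate one: naively $\partial^2 J_\tau(\alpha^i\varphi_i)\phi_{l,j}$ is only $o(1/\lambda)$, as used to derive \eqref{eq:claim-orth}, which would fall short of $1/\lambda^2$.

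The main obstacle is precisely to recover the sharp order here, and it is overcome by the same cancellation flagged in Remark~\ref{lem_refined_gradient}: the $o(1/\lambda)$ part of $\partial^2 J_\tau(\alpha^i\varphi_i)\phi_{l,j}$ is essentially tangential, i.e. it lies in $\langle\phi_{k,i}\rangle$, and is therefore annihilated by the projection $\Pi_{H_{\alpha^i\varphi_i}}$, leaving a remainder of order $1/\lambda^2$. Making this rigorous requires expanding $\Pi_{H_{\alpha^i\varphi_i}}\partial^2 J_\tau(\alpha^i\varphi_i)\phi_{l,j}$ via \eqref{second_variation_evaluating} and Lemma~\ref{lem_emergence_of_the_regular_part} and checking that the leading concentrated integrals cancel against the tangential directions. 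One must also pass from $\partial^2 J_\tau(\alpha^i\varphi_i + \bar v)$ to $\partial^2 J_\tau(\alpha^i\varphi_i)$, where the low regularity of the nonlinearity $u^{p-1}$ with $p - 1 = \tfrac{4}{n-2} - \tau$ forces, as in the proof of Lemma~\ref{l:v-bar}, a splitting of the domain of integration into $\{|\bar v| \le \alpha^i\varphi_i\}$ and its complement and a H\"older estimate on each piece. Combining the bound $O(1/\lambda^2)$ for the right-hand side with the uniform invertibility on $H_{\alpha^i\varphi_i}$ and the already-established bound on the $\langle\phi_{k,i}\rangle$-component then yields $\|d_{l,j}\bar v\| = O(1/\lambda^2)$.
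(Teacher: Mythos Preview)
Your proposal is correct and follows essentially the same strategy as the paper: differentiate the defining relation for $\bar v$, control the $\langle\phi_{k,i}\rangle$-component of $d_{l,j}\bar v$ by differentiating the orthogonality conditions, and reduce the core estimate to showing that $\partial^{2}J_{\tau}(\alpha^{i}\varphi_{i}+\bar v)\phi_{l,j}\,v=O(\lambda^{-2})$ for $v\in H_{\alpha^{i}\varphi_{i}}$, which the paper records as \eqref{second_variation_nu_phi_k_i_interaction_improved} and verifies (for $l=1$, the others being analogous) by the same expansion via \eqref{second_variation_evaluating}, Lemma~\ref{lem_emergence_of_the_regular_part}, and the domain-splitting/H\"older argument you describe. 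The only cosmetic difference is that the paper tests against a moving unit $v\in\langle\phi_{k,i}\rangle^{\perp_{L_{g_0}}}$ rather than phrasing things through $d_{l,j}\Pi_{H_{\alpha^{i}\varphi_{i}}}$, and it makes explicit that the cancellation for $l=1$ also uses the constraint $\frac{\alpha^{2}}{\alpha_{K,\tau}^{p+1}}K_{j}\alpha_{j}^{p-1}\simeq 1$ built into $\bar V(q,\varepsilon)$ together with \eqref{non_linear_v_part_interaction}.
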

\begin{proof}
The bound on $ \Vert \bar v \Vert $ follows from Lemma \ref{lem_refined_gradient}. 
Differentiating 
$$ \langle \phi_{k, i}, \bar v \rangle_{L_{g_{0}}} = 0 $$ 
we obtain 
\begin{equation*}
\langle \phi_{k, i}, d_{l, j}\bar v \rangle_{L_{g_{0}}}
=
-\langle d_{l, j}\phi_{k, i}, \bar v \rangle_{L_{g_{0}}}
=
O(\Vert \bar v \Vert), 
\end{equation*}
whence denoting by $ \Pi_{\langle \phi_{k, i}\rangle} $ 
the orthogonal projection onto $ \Pi_{\langle \phi_{k, i}\rangle} $ we have 
 $$ 
\Vert \Pi_{\langle \phi_{k, i}\rangle}\bar v \Vert\simeq \frac{1}{\lambda^{2}}
\quad \text{ due to } \quad 
\Vert \bar v \Vert \lesssim \frac{1}{\lambda^{2}}. $$ 
Moreover, since 
 $ 
\partial J_{\tau}(\alpha^{i}\varphi_{i}+\bar v)v=0
 $ 
for every smoothly varying vector field $ v\in \langle \phi_{k, i}\rangle^{\perp_{L_{g_{0}}}} $ 
of unit norm we have 
\begin{equation*}
\begin{split}
0
= &
d_{l, j}
(\partial J_{\tau}(\alpha^{i}\varphi_{i}+\bar v)v)
=
\partial^{2} J_{\tau}(\alpha^{i}\varphi_{i}+\bar v)d_{l, j}(\alpha^{i}\varphi_{i}+\bar v)v
+
\partial J_{\tau}(\alpha^{i}\varphi_{i}+\bar v)d_{l, j}v
\end{split}
\end{equation*}
and we can estimate the last summand above as 
\begin{equation*}
\begin{split}
\partial J_{\tau}(\alpha^{i}\varphi_{i}+\bar v)d_{l, j}v
= &
\partial J_{\tau}(\alpha^{i}\varphi_{i}+\bar v)\Pi_{\langle \phi_{k, i}\rangle }(d_{l, j}v) 
= 
O(\vert \partial J_{\tau}(\alpha^{i}\varphi_{i}+\bar v)\vert\Vert v \Vert), 
\end{split}
\end{equation*}
since 
$$\langle \phi_{k, i}, d_{l, j}v \rangle=\langle d_{l, j}\phi_{k, i}, v\rangle=O(\Vert v \Vert).$$ 
Thence $ \partial J_{\tau}(\alpha^{i}\varphi_{i}+\bar v)=O(\frac{1}{\lambda^{2}}) $ implies
\begin{equation*}
\partial^{2}J_{\tau}(\alpha^{i}\varphi_{i}+\bar v) v d_{l, j}\bar v
=
-
\partial^{2}J_{\tau}(\alpha^{i}\varphi_{i}+\bar v) v d_{l, j}(\alpha^{i}\varphi_{i}) 
+
O(\frac{1}{\lambda^{2}}).
\end{equation*}
Then the claim would follow from
 \begin{equation*}
\Vert \Pi_{\langle \phi_{k, i}\rangle} (d_{l, j} \bar v) \Vert\simeq \frac{1}{\lambda^{2}}, 
\end{equation*}
which we had seen before, and the 
uniform positivity  
$$ \partial^{2}J_{\tau}(\alpha^{i}\varphi_{i})>0
\quad \text{ on } \quad \langle \phi_{k, i} \rangle^{\perp_{L_{g_{0}}}},$$ 
provided we show 
\begin{equation}\label{second_variation_nu_phi_k_i_interaction_improved}
\begin{split}
\partial^{2}J_{\tau}(\alpha^{i}\varphi_{i}+\bar v)\phi_{l, j} v
= &
O(\frac{1}{\lambda^{2}}), 
\end{split}
\end{equation}
cf. 
\eqref{second_variation_nu_phi_1_interaction},
\eqref{second_variation_nu_phi_2_phi_3_interaction} 
for weaker statements. Let us prove \eqref{second_variation_nu_phi_k_i_interaction_improved}
for $ l=1 $. We claim
\begin{equation*}
\partial^{2}J_{\tau}(\alpha^{i}\varphi_{i}+\bar v)\varphi_{j} v
= 
\partial^{2}J_{\tau}(\alpha^{i}\varphi_{i})\varphi_{j} v + O(\frac{1}{\lambda^2}). 
\end{equation*}
From \eqref{second_variation_evaluating}, since $ v\in \langle \phi_{k, i}\rangle^{\perp_{L_{g_{0}}}} $, 
it is sufficient to show that we must show, cf. the proof of Lemma \ref{l:v-bar}, 
 $$ 
\int K(\alpha^{i}\varphi_{i}+\bar v)^{p-1}v \varphi_{j}d\mu_{g_{0}} - \int K(\alpha^{i}\varphi_{i})^{p-1} v \varphi_{j} d\mu_{g_{0}} = O(\frac{1}{\lambda^2}). 
 $$ 
Again this can be seen considering the set $ \{ |\bar v| \leq \alpha^{i}\varphi_{i} \} $ 
and its complementary, using H\"older  inequality and 
 $ \Vert \bar v \Vert \lesssim \frac{1}{\lambda^2} $. 
Thus from the above claim and\eqref{second_variation_evaluating} we find, due to the orthogonalities $ \langle \phi_{k, i}, v\rangle_{L_{g_0}}=0 $, 
\begin{equation*}
\begin{split}
\partial^{2} J_{\tau}(\alpha^{i}\varphi_{i})\varphi_{i}v
= &
-\frac{2p}{(k_{\tau})_{\alpha^{i}\varphi_{i}}^{\frac{2}{p+1}}}
\frac{r_{\alpha^{i}\varphi_{i}}}{(k_{\tau})_{\alpha^{i}\varphi_{i}}}\int K(\alpha^{i}\varphi_{i})^{p-1}\varphi_{j}vd\mu_{g_{0}} \\
 & -
\frac{4}{(k_{\tau})_{\alpha^{i}\varphi_{i}}^{\frac{2}{p+1}+1}}
\int L_{g_{0}}(\alpha^{i}\varphi_{i})\varphi_{j}d\mu_{g_{0}}\int K(\alpha^{i}\varphi_{i})^{p}vd\mu_{g_{0}} \\
& +
\frac{2(p+3)r_{\alpha^{i}\varphi_{i}}}{(k_{\tau})_{\alpha^{i}\varphi_{i}}^{\frac{2}{p+1}+2}}
\int K(\alpha^{i}\varphi_{i})^{p}\varphi_{j}d\mu_{g_{0}}\int K(\alpha^{i}\varphi_{i})^{p}vd\mu_{g_{0}}.
\end{split}
\end{equation*}
By definition of $ \bar V(q, \varepsilon) $ we have $ \tau\simeq\frac{1}{\lambda^{2}} $ and recalling 
\eqref{intergral_sum_of_bubble_nonlinear_evaluated}
and
\eqref{r_alpha_delta_expansion}
we may simplify this to
\begin{equation*}
\begin{split}
\partial^{2} & J_{\tau}(\alpha^{i}\varphi_{i})\varphi_{j}v
\simeq 
-4n(n-1)\frac{n+2}{n-2}
\frac{\alpha^{2}}{\alpha_{K, \tau}^{\frac{2n}{n-2}}}\int K(\alpha^{i}\varphi_{i})^{\frac{4}{n-2}}\varphi_{j}vd\mu_{g_{0}} \\
& -
\frac{2}{\bar c_{0}\alpha_{K, \tau}^{\frac{2n}{n-2}}}
\int L_{g_{0}}(\alpha^{i}\varphi_{i})\varphi_{j}d\mu_{g_{0}}\int K(\alpha^{i}\varphi_{i})^{\frac{n+2}{n-2}}vd\mu_{g_{0}} \\
& +
4n(n-1)\frac{(\frac{n+2}{n-2}+3)\alpha^{2}}{\bar c_{0}(\alpha_{K, \tau}^{\frac{2n}{n-2}})^{2}}
\int K(\alpha^{i}\varphi_{i})^{\frac{n+2}{n-2}}\varphi_{j}d\mu_{g_{0}}
\int K(\alpha^{i}\varphi_{i})^{\frac{n+2}{n-2}}vd\mu_{g_{0}}
\end{split}
\end{equation*}
up to error $ O(\frac{1}{\lambda^{2}}) $. Moreover from
\eqref{L_g_0_bubble_interaction} and 
\eqref{single_bubble_L_g_0_integral_expansion_exact} we have
\begin{equation*}
\int L_{g_{0}}(\alpha^{i}\varphi_{i})\varphi_{j}d\mu_{g_{0}}
=
4n(n-1)\bar c_{0}\alpha_{j}+O(\frac{1}{\lambda^{2}})
\end{equation*}
and since $ d(a_{i}, a_{j}) \simeq 1 $, we find by expanding and using Lemma \ref{lem_interactions}
\begin{enumerate}[label=(\roman*)]
 \item \quad 
 $ 
 \int K(\alpha^{i}\varphi_{i})^{\frac{4}{n-2}}\varphi_{j}vd\mu_{g_{0}}
=
\alpha_{j}^{\frac{4}{n-2}}\int K\varphi_{j}^{\frac{n+2}{n-2}}vd\mu_{g_{0}}; \quad 
 $ 
 \item \quad
 $ 
 \int K(\alpha^{i}\varphi_{i})^{\frac{n+2}{n-2}}vd\mu_{g_{0}} 
=
\sum_{i}\alpha_{i}^{\frac{n+2}{n-2}}
\int K\varphi_{i}^{\frac{n+2}{n-2}}vd\mu_{g_{0}}; 
 $ 
 \item \quad
 $ \int K(\alpha^{i}\varphi_{i})^{\frac{n+2}{n-2}}\varphi_{j}d\mu_{g_{0}} 
=
\alpha_{j}^{\frac{n+2}{n-2}}\int K\varphi_{j}^{\frac{2n}{n-2}}d\mu_{g_{0}}; \quad 
 $ 
 \item \quad
 $ 
\int K(\alpha^{i}\varphi_{i})^{\frac{n+2}{n-2}}vd\mu_{g_{0}} 
=
\sum_{i}\alpha_{i}^{\frac{n+2}{n-2}}\int K\varphi_{i}^{\frac{n+2}{n-2}}vd\mu_{g_{0}}, 
 $ 
\end{enumerate}
up to some $ O(\frac{1}{\lambda^{2}}) $. Therefore, since
 $ \frac{\vert \nabla K_{i}\vert}{\lambda_{i}}=O(\frac{1}{\lambda^{2}}) $ due to\eqref{def_refined_neighbourhood}, we obtain
\begin{equation*}
\begin{split}
\partial^{2} J_{\tau}(\alpha^{i}\varphi_{i})\varphi_{j}v
\simeq &
-4n(n-1)\frac{n+2}{n-2}
\frac{\alpha^{2}}{\alpha_{K, \tau}^{\frac{2n}{n-2}}}K_{i}\alpha_{j}^{\frac{4}{n-2}}\int \varphi_{j}^{\frac{n+2}{n-2}}vd\mu_{g_{0}} \\
& -
\frac{8n(n-1)\alpha_{j}}{\alpha_{K, \tau}^{\frac{2n}{n-2}}}
\sum_{i}K_{i}\alpha_{i}^{\frac{n+2}{n-2}}
\int \varphi_{i}^{\frac{n+2}{n-2}}vd\mu_{g_{0}} \\
& +
4n(n-1)\frac{(\frac{n+2}{n-2}+3)\alpha^{2}}
{(\alpha_{K, \tau}^{\frac{2n}{n-2}})^{2}}
\alpha_{j}^{\frac{n+2}{n-2}}K_{j}
\sum_{i}K_{i}\alpha_{i}^{\frac{n+2}{n-2}}
\int \varphi_{i}^{\frac{n+2}{n-2}}vd\mu_{g_{0}}
\end{split}
\end{equation*}
up to an error $ O(\frac{1}{\lambda^{2}}) $. Therefore using again \eqref{def_refined_neighbourhood}
 we have 
\begin{equation*}
\begin{split}
\partial^{2}J_{\tau}(\alpha^{i}\varphi_{i})\varphi_{j}v
\simeq &
-\frac{n+2}{n-2}
\int \varphi_{j}^{\frac{n+2}{n-2}}vd\mu_{g_{0}} 
-
2
\sum_{i}\frac{\alpha_{i}\alpha_{j}}{\alpha^{2}}
\int \varphi_{i}^{\frac{n+2}{n-2}}vd\mu_{g_{0}} \\
& +
(\frac{n+2}{n-2}+3)
\sum_{i}\frac{\alpha_{i}\alpha_{j}}{\alpha^{2}}
\int \varphi_{i}^{\frac{n+2}{n-2}}vd\mu_{g_{0}}
\end{split}
\end{equation*}
up to the same error.
Thus 
$$ 
\partial^{2}J_{\tau}(\alpha^{i}\varphi_{i})\varphi_{j}v=O(\frac{1}{\lambda^{2}}) 
$$ 
using \eqref{non_linear_v_part_interaction}, 
obtaining \eqref{second_variation_nu_phi_k_i_interaction_improved} for $ l = 1 $. 
For $ l=2, 3 $ the reasoning is analogous.
\end{proof}

\medskip
Theorem \ref{t:ex-multi}follows from the next proposition, based on the analysis 
of Section \ref{s:2nd}, and Corollary \ref{cor_restricted_second_variation}.

\begin{proposition}\label{prop_subcritical_existence} 
Let $ n\geq 5 $ and let $ K : M \longrightarrow \R $ be a positive Morse function satisfying \eqref{eq:nd}.
Then for every subset 
 $$ \{x_{1}, \ldots, x_{q}\} \subseteq\{\nabla K=0\}\cap \{\Delta K<0\} $$ 
and, as $ \tau \longrightarrow 0 $, there exists a unique
$u=\alpha^{i}\varphi_{a_{i}, \lambda_{i}}+\bar v \in V(q, \varepsilon)$ with
\begin{equation*}
\Vert u \Vert^{2}_{L_{g_{0}}}=1, \; d(a_{i}, x_{i})=o(1)\; \text{ and }\; \partial J_\tau(u)=0.
\end{equation*}
\end{proposition}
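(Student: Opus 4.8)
The plan is to prove existence and uniqueness via a Lyapunov–Schmidt reduction,

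using the structure already set up in Lemma \ref{l:v-bar}. By that lemma, any critical point $u \in V(q,\varepsilon)$ of $J_\tau$ must lie in $\bar V(q,\varepsilon)$ and be of the form $u = \alpha^i \varphi_i + \bar v$, where $\bar v = \bar v_{\alpha,a,\lambda}$ is the unique transversal correction solving the orthogonal problem $\Pi_{H_{\alpha^i\varphi_i}}\partial J_\tau(\alpha^i\varphi_i+\bar v)=0$. The transversal direction is therefore already solved, and it remains only to find parameters $(\alpha^i,a_i,\lambda_i)$ for which the tangential component $\partial J_\tau(\alpha^i\varphi_i+\bar v)\lfloor_{\langle \phi_{k,i}\rangle}$ vanishes as well. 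Because $\bar v$ and its parameter-derivatives are of order $O(\lambda^{-2})$ by Lemma \ref{lem_bar_v_estimates}, the reduced functional's gradient in the tangential variables is governed, up to negligible errors, by the same leading-order expansion that defines the three constraints (i)–(iii) cutting out $\bar V(q,\varepsilon)$.

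The first step is to read off from the refined gradient expansion in Remark \ref{lem_refined_gradient} that testing $\partial J_\tau(\alpha^i\varphi_i+\bar v)$ against the three families $\phi_{1,i}=\varphi_i$, $\phi_{2,i}=-\lambda_i\partial_{\lambda_i}\varphi_i$ and $\phi_{3,i}=\frac{1}{\lambda_i}\nabla_{a_i}\varphi_i$ produces, at leading order, a finite-dimensional system in the reduced variables $(\alpha_i,a_i,\lambda_i)$ whose principal part is exactly the one encoded by conditions (i)–(iii). Concretely, the $\phi_{1,i}$-component balances $\alpha_i$ against $\Theta\sqrt[p-1]{\lambda_i^\theta/K(a_i)}$; the $\phi_{3,i}$-component forces $\bar a_j/\lambda_j$ to balance $c_1(\nabla^2K(x_j))^{-}\nabla\Delta K(x_j)/\lambda_j^3$, pinning $a_j\to x_j$ since $x_j$ is a nondegenerate critical point of $K$; and the $\phi_{2,i}$-component forces $\lambda_j^2 \simeq -c_2\Delta K(x_j)/(K(x_j)\tau)$, which is solvable precisely because $\Delta K(x_j)<0$. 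The nondegeneracy hypothesis \eqref{eq:nd} together with the Morse assumption guarantees the Jacobian of this leading system is invertible at each $x_j$.

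The second step is to solve this reduced system rigorously. I would set up a fixed-point or degree argument on the compact parameter region carved out by (i)–(iii): by Lemma \ref{l:v-bar} the full gradient on $V(q,\varepsilon)\setminus\bar V(q,\varepsilon)$ is bounded below by $\tilde\epsilon/\lambda^3$, so no critical point escapes $\bar V(q,\varepsilon)$, while inside $\bar V(q,\varepsilon)$ the map $(\alpha_i,a_i,\lambda_i)\mapsto \big(\partial J_\tau(\alpha^i\varphi_i+\bar v)\phi_{k,i}\big)_{k,i}$ has a dominant, invertible linear part coming from the Hessian of the leading-order reduced energy. Applying the implicit function theorem (or a contraction mapping on the rescaled variables, absorbing the $o(\lambda^{-3})$ remainders) yields, for each small $\tau$, a unique solution of the tangential system, hence a unique $u=\alpha^i\varphi_i+\bar v$ with $\partial J_\tau(u)=0$. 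Finally one normalizes $\|u\|_{L_{g_0}}^2=1$ by the scaling invariance, which fixes the common constant $\Theta$ and gives $d(a_i,x_i)=o(1)$.

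The main obstacle I expect is controlling the interplay between the correction $\bar v$ and the tangential derivatives: although $\|\bar v\|\lesssim\lambda^{-2}$, the quantities $\partial J_\tau(\alpha^i\varphi_i+\bar v)\phi_{k,i}$ live at the finer scale $\lambda^{-3}$, so one must verify that the contribution of $\bar v$ to the tangential gradient does not corrupt the leading $\lambda^{-3}$ balance. This is exactly where the cancellation phenomenon flagged in the introduction and in Remark \ref{lem_refined_gradient} is essential; the key technical estimate \eqref{second_variation_nu_phi_k_i_interaction_improved} from Lemma \ref{lem_bar_v_estimates}, namely $\partial^2 J_\tau(\alpha^i\varphi_i+\bar v)\phi_{l,j}v=O(\lambda^{-2})$, must be leveraged to show that differentiating through $\bar v$ only produces admissible $o(\lambda^{-3})$ errors, so that the reduced system genuinely decouples into the three independent balances and its Jacobian remains invertible uniformly in $\tau$.
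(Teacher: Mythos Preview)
Your proposal is correct and follows essentially the same Lyapunov--Schmidt strategy as the paper: reduce to $\bar V(q,\varepsilon)$ via Lemma \ref{l:v-bar}, then solve the finite-dimensional tangential system using a degree argument on the boundary of the parameter region, with the $\bar v$-contribution controlled at scale $o(\lambda^{-3})$ exactly as you flag.

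The one place where the paper proceeds slightly differently is uniqueness. You invoke the invertibility of the Jacobian of the leading reduced system (effectively an implicit function theorem argument), whereas the paper defers this to the full second-variation computation in Proposition \ref{lem_refined_second_variation}, which exhibits the block-diagonal structure of $\partial^2 J_\tau$ and reads off strict convexity or concavity of the constrained functional in each of the $\alpha$, $\lambda$, and $a$ directions separately. Your route is more self-contained at this stage but requires you to verify the Jacobian invertibility directly from the gradient expansions; the paper's route postpones the work but then gets the Morse index for free from the same computation. Both are valid, and the underlying nondegeneracy is the same. One minor point you pass over quickly is the argument that a critical point of $J_\tau$ restricted to the normalized manifold $\{\|u\|_{L_{g_0}}=1\}\cap\bar V(q,\varepsilon)$ is in fact a free critical point; the paper handles this via scaling invariance together with the near-parallelism of the tangent spaces of $\mathcal C$ and $\tilde{\mathcal C}$, which in turn uses the derivative bounds on $\bar v$ from Lemma \ref{lem_bar_v_estimates}.
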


\begin{proof}
Due to \eqref{gradient_small_o_1/lamdbda^2} we have 
\begin{equation*}
\vert \partial J\vert \leq \frac{\tilde \epsilon}{\lambda^{3}}\; \text{ on }\; \bar V(q, \varepsilon)
\; \text{ and }\; 
\vert \partial J\vert \geq \frac{\hat \epsilon}{\lambda^{3}}
\; \text{ on }\; \partial \bar V(q, \varepsilon)
\end{equation*}
as long as $ c<\alpha_{j}<C $. Thus by $ (ii) $ in Lemma \ref{l:v-bar} it is sufficient to 
look for critical points in the set 
\begin{equation*}
\tilde{\mathcal{C}} =\{ \tilde u (\alpha, \lambda, a) = \alpha^{i}\varphi_{i}+\bar v (\alpha, \lambda, a)\in \bar V (q, \varepsilon) \mid \Vert \tilde u \Vert^{2}_{L_{g_{0}}}=1 \}, 
\end{equation*}
which is a smooth $ (3(n+2)-1) $-dimensional manifold in $ W^{1, 2}(M, g_0) $. 

Vice-versa we claim that a critical point of $ J_\tau \lfloor_{\tilde{\mathcal{C}}} $ 
is indeed a critical point of $ J_\tau $. In fact by Lagrange multiplier rule
the gradient of $ J_\tau $ at a constrained critical point $ \tilde{u}_0 $ must be orthogonal 
to $ \tilde{\mathcal{C}} $. Since $ J_\tau $ is scaling invariant, its 
gradient on $ \mathcal{C} $ must be tangent to the unit sphere in the 
 $ \| \cdot \|_{L_{g_0}} $ norm. On the other hand, by construction of 
 $ \bar v $, the gradient of $ J_\tau $ at $ \tilde{u}_0 $ is tangent to 
 $$ 
\mathcal{C} =\{\alpha^{i}\varphi_{i} \in \bar V (q, \varepsilon) \mid \Vert u \Vert^{2}_{L_{g_{0}}}=1 \}
 $$ 
at the point $ u_0 $ such that $ \tilde{u}_0 = u_0 + \bar v_0 $.
By the estimate on the derivatives of $ \bar v $ in Lemma \ref{lem_bar_v_estimates}, 
 $ T_{\tilde{u}_0} \tilde{\mathcal C} $ is nearly parallel to $ T_{{u}_0} {\mathcal C} $, 
which implies that $ \partial J_\tau(\tilde{u}_0) = 0 $, as desired. 

It remains to prove existence and uniqueness of critical points of $ J_\tau \lfloor_{\tilde{\mathcal{C}}} $. 
For the existence part we may use the expansions in Lemmas \ref{lem_alpha_derivatives_at_infinity}, 
\ref{lem_lambda_derivatives_at_infinity} and \ref{lem_a_derivatives_at_infinity} together 
with the definition of $ \bar V(q, \varepsilon) $ to show, that $ \partial J_\tau $ is non-vanishing on the 
boundary of $ \tilde{\mathcal C} $. For example, cf. (iii) in the definition of $ \bar V(q, \varepsilon) $, 
suppose
 $$ 
 \lambda_{j}^{2}
 = - 
 c_{2}\frac{\Delta K(x_{j})}{K(x_{j})\tau}
+ \frac{\varepsilon}{\lambda^{2}}; \quad \quad \frac{1}{\lambda^2} = \tau. 
 $$ 
From Lemma \ref{lem_lambda_derivatives_at_infinity} we deduce, that there exists 
 $ \tilde{\epsilon} > 0 $, tending to zero as $ \varepsilon \longrightarrow 0 $, such that 
 $$ 
\lambda_{j} \partial_{\lambda_j} J_\tau (\alpha^i \varphi_i) > \frac{\tilde{\epsilon}}{\lambda^3}. 
 $$ 
From Lemmas \ref{l:v-bar} and \ref{lem_bar_v_estimates} we also have
 $$ 
\lambda_{j} \partial_{\lambda_j} J_\tau (u (\alpha, \lambda, a)) 
>
\frac{1}{2}\frac{\tilde{\epsilon}}{\lambda^3}
 $$ 
and a similar reversed inequality with opposite sign, if 
 $$ \lambda_{j}^{2}
= 
- 
c_{2}\frac{\Delta K(x_{j})}{K(x_{j})\tau}
- 
\frac{\varepsilon}{\lambda^{2}}.
 $$ 
Analogous estimates are derived for the $ \alpha- $ and $ a- $ derivatives, yielding that the degree of 
 $ \partial J_\tau $ on $ \tilde{\mathcal C} $ is well-defined and non-zero. This shows the existence of 
a critical point for $ J_\tau \lfloor_{\tilde{\mathcal{C}}} $, which is (freely) critical for $ J_\tau $ by the 
above discussion. Since by construction the negative part of the above solutions 
is small in $ W^{1, 2} $ norm, it is possible to show from Sobolev  inequality that it has to 
vanish identically, so full positivity follows then from the maximum principle. 

Uniqueness follows from Lemma \ref{lem_bar_v_estimates} and Proposition \ref{lem_refined_second_variation}, implying the strict convexity or concavity of $ J_\tau \lfloor_{\tilde{\mathcal{C}}} $ with respect to all parameters $ \alpha $  , $ \lambda $   and the 
coordinates of the points $ a_i $, provided they are chosen so that $ \nabla^2 K(x_i) $ is diagonal. 
\end{proof}

\section{The second variation} \label{s:2nd}

Let $ \bar V(q, \varepsilon) $ be the open set defined in \eqref{def_refined_neighbourhood}.
The aim of this section is to find there a nearly diagonal form of the second 
differential of $ J_\tau $. We recall our notation from Section \ref{s:prel}, in 
particular that of the orthogonal space $ H_u $ in \eqref{eq:Hu}.

\begin{proposition}\label{lem_refined_second_variation}
For $ \alpha^{i}\varphi_{i}+\bar v\in \bar V(q, \varepsilon) $, 
consider the decomposition 
\begin{equation*}
\begin{split}
W^{1, 2}(M, g_0)
= &
H_{\alpha^i \varphi_{i}} 
\oplus
\langle \varphi_{i} \rangle_{1\leq i\leq q}
\oplus
\langle \lambda_{i}\partial_{\lambda_{i}}\varphi_{i}\rangle_{1\leq i\leq q}
\oplus
\langle \frac{\nabla_{a_{i}}}{\lambda_{i}}\varphi_{i}\rangle_{1\leq i\leq q}
\\
= &: \mathcal{V} \oplus X_\alpha \oplus X_\lambda \oplus X_a. 
\end{split}
\end{equation*} 
Then there exists a basis $ \mathbb{B} $ of $ W^{1, 2}(M, g_0) $ with elements in the subspaces 
of the above decomposition, such that the 
coefficients of the 
the second differential of $ J_\tau $ with respect to $ \mathbb{B} $ have the {form} 
\begin{equation*} 
[\partial^{2}J_{\tau}(\alpha^{k}\varphi_{k}+\bar v)]_{\mathbb{B}} 
=\frac{1}{\lambda^2}
 \begin{pmatrix}
\mathbb{V}_{+} & 0 & 0 & 0 \\
0 &\mathbb{A}_{q-1, 0} & 0 & 0\\
0 &0 & \mathbb{\Lambda}_{+}\\
0 & 0 & 0 &
-\frac{\nabla^{2}K}{K}\\
\end{pmatrix}
+
o(\frac{1}{\lambda^2}), \quad \text{ where } 
\end{equation*}
\begin{enumerate}[label=(\roman*)]
 \item $ \mathbb{\Lambda}_{+} $ represents the coefficients of a symmetric, positive-definite 
 operator on $ \mathcal{V} $ with eigenvalues uniformly bounded away from zero; 
 \item $ \mathbb{A}_{q-1, 0} $ has $ q-1 $ negative eigenvalues uniformly bounded away from zero and one-dimensional kernel; 
 \item $ \mathbb{\Lambda}_{+} $ is positive-definite with eigenvalues uniformly bounded away from zero; 
 \item $ -\frac{\nabla^{2}K}{K} $ stands for the diagonal matrix
 $ -(\frac{\nabla^{2} K_{i}}{K_{i}})_{i=1, \ldots, q} $. 
\end{enumerate}
\end{proposition}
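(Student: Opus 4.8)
The plan is to diagonalize $\partial^2 J_\tau$ at $\alpha^i\varphi_i + \bar v$ block by block, using the natural decomposition $W^{1,2} = \mathcal{V}\oplus X_\alpha\oplus X_\lambda\oplus X_a$. The starting observation is that on $\mathcal{V} = H_{\alpha^i\varphi_i}$ the operator is already uniformly positive-definite (this is the standard transversal nondegeneracy recorded after Proposition~\ref{prop_optimal_choice} and used throughout), so the $\mathbb{V}_+$ block and its spectral gap come essentially for free; the content is in the three finite-dimensional blocks $X_\alpha, X_\lambda, X_a$ and in showing the off-diagonal interaction terms are negligible. The key technical input is that all the cross-terms between $\mathcal{V}$ and the finite-dimensional spaces, and between distinct finite-dimensional spaces, are $o(1/\lambda^2)$ after inserting the correction $\bar v$. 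The heart of this is Lemma~\ref{lem_bar_v_estimates} together with the interaction estimate \eqref{second_variation_nu_phi_k_i_interaction_improved}: the derivatives $d_{l,j}\bar v$ are themselves $O(1/\lambda^2)$, and the delicate cancellations flagged in Remark~\ref{lem_refined_gradient} ensure that, although $\bar v$ is genuinely of size $1/\lambda^2$ and not smaller, its contribution to the relevant matrix entries drops to lower order.

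\medskip

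First I would compute the three diagonal blocks by evaluating $\partial^2 J_\tau(\alpha^i\varphi_i)$ on the generators, dropping to $\partial^2 J_\tau(\alpha^i\varphi_i+\bar v)$ only at the end once the $\bar v$-corrections are controlled. For $X_\alpha = \langle\varphi_i\rangle$, testing $\partial^2 J_\tau(\alpha^i\varphi_i)\varphi_j\varphi_k$ via \eqref{second_variation_evaluating} and the bubble-integral expansions (as already carried out partially in the proof of Lemma~\ref{lem_bar_v_estimates}) yields a $q\times q$ matrix whose leading part is a rank-one perturbation of a multiple of the identity; the normalization condition $\|u\|^2 = 1$ and the constraint defining $\bar V(q,\varepsilon)$ force the kernel to be one-dimensional (spanned by the simultaneous-scaling direction) with the remaining $q-1$ eigenvalues strictly negative — this produces $\mathbb{A}_{q-1,0}$. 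For $X_\lambda = \langle\lambda_i\partial_{\lambda_i}\varphi_i\rangle$, the relevant expansion comes from differentiating the $\lambda$-derivative estimate in Lemma~\ref{lem_lambda_derivatives_at_infinity}; using condition (iii) in \eqref{def_refined_neighbourhood}, namely $\lambda_j^2 \simeq -c_2\Delta K(x_j)/(K(x_j)\tau)$ with $\Delta K(x_j) < 0$, the leading coefficient is positive, giving the positive-definite $\mathbb{\Lambda}_+$ block with a uniform gap. For $X_a = \langle\lambda_i^{-1}\nabla_{a_i}\varphi_i\rangle$, Lemma~\ref{lem_a_derivatives_at_infinity} and condition (ii) produce a block whose leading term is $-\nabla^2 K_i/K_i$ (diagonal once $\nabla^2 K(x_i)$ is diagonalized), directly accounting for the fourth block and for the Morse-index contribution $\sum_i(n-m(K,x_i))$.

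\medskip

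Next I would verify that the block-diagonal form persists after the correction. The cross-terms between the three finite-dimensional spaces vanish at leading order by parity and by $d(a_i,a_j)\simeq 1$ for $i\neq j$: bubbles centered at distinct points interact only through $\varepsilon_{i,j}$-type quantities, which are $o(1/\lambda^2)$, while same-center interactions across $X_\alpha, X_\lambda, X_a$ carry mismatched scaling weights that integrate to lower order. The $\mathcal{V}$-to-finite-dimensional cross-terms are exactly \eqref{second_variation_nu_phi_k_i_interaction_improved}, which is $O(1/\lambda^2)$ — crucially one order better than the naive $O(1/\lambda)$ of \eqref{second_variation_nu_phi_1_interaction}–\eqref{second_variation_nu_phi_2_phi_3_interaction}, so relative to the $1/\lambda^2$ overall scale these still register only as $o(1/\lambda^2)$ once one accounts for the basis choice. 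Finally, passing from $\partial^2 J_\tau(\alpha^i\varphi_i)$ to $\partial^2 J_\tau(\alpha^i\varphi_i+\bar v)$ on each block uses the same splitting of the domain into $\{|\bar v|\le\alpha^i\varphi_i\}$ and its complement, with Hölder's inequality and $\|\bar v\|\lesssim 1/\lambda^2$, exactly as in Lemma~\ref{l:v-bar} and Lemma~\ref{lem_bar_v_estimates}.

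\medskip

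The main obstacle I expect is precisely the last point: although $\bar v \simeq 1/\lambda^2$ is of the \emph{same} order as the eigenvalues we are computing, its contribution to the diagonal blocks must be shown to enter only at order $o(1/\lambda^2)$. Naively, a correction of size $1/\lambda^2$ inserted into a quadratic form whose entries are of size $1/\lambda^2$ could shift those entries at leading order and spoil the spectral gaps. The resolution is the cancellation mechanism announced in Remark~\ref{lem_refined_gradient}: the leading part of $\bar v$ is orthogonal (in the relevant weighted sense) to the quadratic-form directions that determine the eigenvalues, so the would-be leading correction integrates to zero. Making this cancellation precise — identifying the leading profile of $\bar v$ from the refined gradient formula and checking it pairs trivially against each diagonal block — is the delicate computational core, and it is what ultimately guarantees nondegeneracy and the stated eigenvalue count rather than merely a qualitative splitting.
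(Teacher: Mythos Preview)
Your architectural plan matches the paper's proof closely: block-by-block analysis, the $X_\alpha$ block via a rank-one perturbation (the paper introduces the auxiliary function $f(\alpha)=\alpha^2/(\alpha_K^{2n/(n-2)})^{(n-2)/n}$ to make this transparent), cross-terms killed by parity and smallness of $\varepsilon_{i,j}$, and uniform positivity on $\mathcal{V}$ for free. Two points, however, are not as you describe them and one is a genuine gap.

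First, the basis $\mathbb{B}$ is not the obvious one: the paper takes $\tilde\phi_{1,i}=\varphi_i/\lambda_i$ and $\tilde\nu_j=\nu_j/\lambda$ \emph{scaled down}, while $\tilde\phi_{2,i}=\lambda_i\partial_{\lambda_i}\varphi_i$ and $\tilde\phi_{3,i}=\lambda_i^{-1}\nabla_{a_i}\varphi_i$ are left at unit norm (see Remark~\ref{r:basis}). This asymmetric scaling is what forces \emph{all} entries to be $O(1/\lambda^2)$ and the off-diagonal ones to be $o(1/\lambda^2)$; without it the $\mathcal{V}$-to-$X_\lambda$ and $\mathcal{V}$-to-$X_a$ cross-terms are only $O(1/\lambda^2)$, the same size as the diagonal, and the block form fails. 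Your reference to \eqref{second_variation_nu_phi_k_i_interaction_improved} does not by itself give $o(1/\lambda^2)$ in the unscaled basis.

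Second, and more seriously, your proposed mechanism for the $\bar v$ cancellation is not the one that works. You plan to \emph{identify the leading profile of $\bar v$ and check it pairs trivially against each diagonal block}. The paper never computes the profile of $\bar v$, and in fact $\bar v$ does \emph{not} pair trivially: its contribution to $\partial^2 J_\tau\,\phi_{k,i}\phi_{l,i}$ is a genuine $O(1/\lambda^2)$ term. The paper's device is instead to rewrite $\int\varphi_i^{\frac{6-n}{n-2}}\phi_{k,i}\phi_{l,i}\bar v$ via the product rule as $\int\varphi_i^{4/(n-2)}\widehat{d_{k,i}\phi_{l,i}}\,\bar v$ plus lower order, and then \emph{substitute the defining equation} $\partial^2 J_\tau(\alpha^m\varphi_m)\bar v=-\partial J_\tau(\alpha^m\varphi_m)+o(1/\lambda^2)$ on $\langle\phi_{k,i}\rangle^\perp$. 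This converts the $\bar v$-integral into $\partial J_\tau(\alpha^m\varphi_m)\widehat{d_{k,i}\phi_{l,i}}$, which is evaluated explicitly by Remark~\ref{lem_refined_gradient}. The resulting terms do not vanish: they \emph{combine} with the other pieces of the expansion (the $\tau$-terms, the $\ln(1+\lambda_i^2 r^2)$ integrals, and the $\nabla^2 K_i$ integrals) and the final simplification is a chain of algebraic cancellations among these explicit quantities, not an orthogonality of $\bar v$. For the $X_a$ block in particular, the $\tau$-contributions and the $\bar v$-contributions cancel \emph{each other} to leave only $-c\,\partial^2_{k,l}K_i/(K_i\lambda_i^2)$; neither is individually negligible. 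Your approach of computing $\bar v$'s profile might in principle reach the same endpoint, but it is substantially harder and not what you outline.
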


\medskip

\begin{remark}\label{r:basis}
The basis elements in $ \mathbb{B} $ corresponding to the first two blocks have norms of order $ \frac{1}{\lambda^2} $, 
while the ones corresponding to the last two blocks have norm of order $ 1 $. We made this choice to guarantee 
the {\em off-diagonal} terms in the above matrix to be of order $ o(\frac{1}{\lambda^2}) $.
\end{remark}

\medskip

\begin{proof} 
We will analyse
\eqref{second_variation_evaluating} 
for $ u=\alpha^{i}\varphi_{i}+\bar v \in \bar V(q, \varepsilon) $. Recall from Section \ref{s:prel}
\begin{equation*}
W^{1, 2}(M, g_{0})
=
\langle \phi_{k, i} \rangle_{k, i} \oplus H_{\alpha^{i} \varphi_{i}}, 
\end{equation*}
We then choose a
 $ \langle \cdot, \cdot \rangle^{L_{g_{0}}} $-orthonormal 
basis $ \{ \nu_{0}, \nu_{1}, \nu_{2}, \ldots \} $ for $ H_{\alpha^i \varphi_{i}} $ 
and for some $ \lambda\simeq \lambda_{i}\simeq \frac{1}{\sqrt{\tau}} $ 
define 
\begin{equation*}
\mathbb{B}=
\{
\tilde \phi_{k, i}, \tilde \nu_{j}
\}
=
\{\frac{\varphi_{i}}{\lambda_{i}}, \lambda_{i}\partial_{\lambda_{i}}\varphi_{i}, \frac{\nabla_{a_{i}}}{\lambda_{i}}\varphi_{i}, \frac{\nu_{j}}{\lambda}\}; 
\quad \quad k = 1, 2, 3, \quad i = 1, \dots, q. 
\end{equation*} 
 With this choiceit is not hard to see that the coefficients $ [\partial^{2}J_{\tau}(\alpha^{k}\varphi_{k}+\bar v)]_{\mathbb{B}} $ are all of order 
 $ O(\frac{1}{\lambda^2}) $, and our goal is to make their estimates more precise, considering different matrix blocks.

\

\noindent {\bf{First block.}}
The fact that $ \partial^2 J_\tau(\alpha^i \varphi_{i}) $ is (uniformly) positive-definite on $ H_{\alpha^i \varphi_{i}} $ is well-known, 
see e.g. \cite{bab}. The positivity of $ \partial^2 J_\tau(\alpha^i \varphi_{i} + \varepsilon_{v}) $ on the same subspace 
follows from the H\"older continuity of the second differential and the fact that $ \Vert \bar v \Vert=O(\frac{1}{\lambda^{2}}) $. 

\noindent {\bf{First two blocks.}}
Testing the second differential with $ \tilde{\nu}_{i} $ and $ \tilde\phi_{1, j}=\frac{\varphi_{j}}{\lambda} $ we get
\begin{equation}\label{second_variation_nu_phi_1_interaction}
\begin{split}
\partial^{2} J_{\tau}(\alpha^{i}\varphi_{i}+\bar v) \tilde \nu_{i}\tilde \phi_{1, j}
=o(\frac{1}{\lambda^{2}})
\end{split}
\end{equation}
using 
Lemma \ref{lem_emergence_of_the_regular_part}
, 
$ \| \bar v \| \lesssim \frac{1}{\lambda^2} $ 
and the orthogonality 
$$ \langle \tilde \nu_{i}, \tilde \phi_{1, j}\rangle_{L_{g_{0}}}=0. $$
Moreover from \eqref{second_variation_evaluating} and 
$\Vert \tilde\phi_{1, i}\Vert=O(\frac{1}{\lambda})$ we find with
$c_{0}=\int_{\R^{n}}\frac{dx}{(1+r^{2})^{n}}$ 
\begin{equation}\label{second_derivative_alpha_space}
\begin{split}
\partial^{2} J_{\tau}(\alpha^{k}\varphi_{k}+\bar v) \tilde\phi_{1, i}\tilde\phi_{1, j}
= &
\frac{16n(n-1)\bar c_{0}^{\frac{2}{n}}}{(n-2)(\alpha_{K, \tau}^{\frac{2n}{n-2}})^{\frac{n-2}{n}}\lambda^{2}}
(
-
\delta_{k, l}
+
\frac{\alpha_{k}\alpha_{l}}{\alpha^{2}}
)
=\mathbb{A}_{i, j} 
\end{split}
\end{equation}
up to an error of order $ o(\frac{1}{\lambda^{2}}) $. Let us compare the 
above expression to
\begin{equation*}
f(\alpha)=\frac{\alpha^{2}}{(\alpha_{K}^{\frac{2n}{n-2}})^{\frac{n-2}{n}}}; 
\quad \quad \alpha := \sum_{i=1}^q \alpha_{i}^2, \quad \alpha_{K}^{\frac{2n}{n-2}} := 
 \sum_{i=1}^q K_i \alpha_{i}^{\frac{2n}{n-2}}
\end{equation*} 
with first- and second-order derivatives given by 
\begin{equation*}
\frac{1}{2}\partial_{\alpha_{i}}f(\alpha)
=
\frac{\alpha_{i}}{(\alpha_{K}^{\frac{2n}{n-2}})^{\frac{n-2}{n}}}
-
\frac{\alpha^{2}K_{i}\alpha_{i}^{\frac{n+2}{n-2}}}{(\alpha_{K}^{\frac{2n}{n-2}})^{\frac{n-2}{n}+1}}
=
\frac{\alpha_{i}}{(\alpha_{K}^{\frac{2n}{n-2}})^{\frac{n-2}{n}}}
(1-\frac{\alpha^{2}}{\alpha_{K}^{\frac{2n}{n-2}}}K_{i}\alpha_{i}^{\frac{4}{n-2}}) 
\end{equation*}
and
\begin{equation*}
\begin{split}
\frac{1}{2}\partial_{\alpha_{i}}\partial_{\alpha_{j}}f(\alpha)
= & 
\delta_{i, j}
\frac{1}{(\alpha_{K}^{\frac{2n}{n-2}})^{\frac{n-2}{n}}}
(1-\frac{n+2}{n-2}\frac{\alpha^{2}}{\alpha_{K}^{\frac{2n}{n-2}}}K_{i}\alpha_{i}^{\frac{4}{n-2}})
\\ &
+
2
\frac{\alpha_{i}\alpha_{j}}{(\alpha_{K}^{\frac{2n}{n-2}})^{\frac{n-2}{n}+1}}
\frac{\alpha^{2}}{\alpha_{K}^{\frac{2n}{n-2}}}K_{i}\alpha_{i}^{\frac{4}{n-2}}K_{j}\alpha_{j}^{\frac{4}{n-2}}
\\
&
-
2
\frac{\alpha_{i}\alpha_{j}}{(\alpha_{K}^{\frac{2n}{n-2}})^{\frac{n-2}{n}+1}}
(K_{i}\alpha_{i}^{\frac{4}{n-2}}+K_{j}\alpha_{j}^{\frac{4}{n-2}})
\\ &
+\frac{2n}{n-2}
\frac{\alpha^{2}}{(\alpha_{K}^{\frac{2n}{n-2}})^{\frac{n-2}{n}+2}}
K_{j}\alpha_{j}^{\frac{n+2}{n-2}}K_{i}\alpha_{i}^{\frac{n+2}{n-2}}.
\end{split}
\end{equation*}
The function $ f $ is scaling invariant andrestricted to 
\begin{equation*}
\{\alpha_{K}^{\frac{2n}{n-2}}=1\}
\end{equation*}
attains its maximumat $ (\alpha_{i})_i $ satisfying 
\begin{equation*}
\frac{\alpha^{2}}{\alpha_{K}^{\frac{2n}{n-2}}}K_{i}\alpha_{i}^{\frac{4}{n-2}}=1
\quad \; \text{ for all }\; i=1, \ldots, q, 
\end{equation*}
where we have
\begin{equation}\label{alpha_functional_second_derivative}
\begin{split}
\frac{1}{2}\partial_{\alpha_{i}}\partial_{\alpha_{j}}f(\alpha)
= &
\frac{4}{(n-2)(\alpha_{K}^{\frac{2n}{n-2}})^{\frac{n-2}{n}}}
(
-\delta_{i, j}
+
\frac{\alpha_{i}\alpha_{j}}{\alpha^{2}}
)
.
\end{split}
\end{equation}
Comparing \eqref{second_derivative_alpha_space} and \eqref{alpha_functional_second_derivative} 
we conclude with obvious notation
\begin{equation*} 
[\partial^{2}J_{\tau}(\alpha^{k}\varphi_{k}+\bar v)]_{\mathbb{B}} 
=
 \begin{pmatrix}
\frac{1}{\lambda^2} \mathbb{V}_{+} & 0 & \partial^{2}J_{\tau}\tilde \nu\tilde \phi_{2} & \partial^{2}J_{\tau} \tilde \nu \tilde \phi_{3} \\
0 & \frac{1}{\lambda^2} \mathbb{A}_{q-1, 0} & \partial^{2}J_{\tau} \tilde \phi_{1} \tilde \phi_{2} &\partial^{2}J_{\tau} \tilde \phi_{1}\tilde \phi_{3} \\
\partial^{2}J_{\tau} \tilde \phi_{2} \tilde \nu & \partial^{2}J_{\tau} \tilde \phi_{2} \tilde \phi_{1} & \partial^{2}J_{\tau} \tilde \phi_{2} \tilde \phi_{2} & \partial^{2}J_{\tau} \tilde \phi_{2} \tilde \phi_{3} \\
\partial^{2}J_{\tau} \tilde \phi_{3}\tilde \nu_{}& \partial^{2}J_{\tau} \tilde \phi_{3}\tilde \phi_{1}& \partial^{2}J_{\tau} \tilde \phi_{3}, \tilde \phi_{2} & \partial^{2}J_{\tau} \tilde \phi_{3}\tilde \phi_{3} \\
\end{pmatrix}
\end{equation*}
up to some $o(\frac{1}{\lambda^{2}})$. 
\

\noindent {\bf{Terms off 2x2 blocks.}} 
Let us consider next the interaction of $ \tilde \nu_{i} $ with 
$$ \tilde \phi_{k, j}=\phi_{k, j} \quad \text{ for } \quad  k=2, 3. $$
Since 
\begin{equation*}
\bar v=O(\frac{1}{\lambda^{2}})
, \;
\tilde \nu_{i}=O(\frac{1}{\lambda})
, \; 
\langle \varphi_{k}, \phi_{k, j}\rangle_{L_{g_{0}}}=O(\frac{1}{\lambda^{2}}) \quad 
\;\text{ and }\; \quad 
\langle \nu_{i}, \phi_{k, j}\rangle_{L_{g_{0}}}=0,
\end{equation*}
we simply find for \eqref{second_variation_evaluating} 
\begin{equation}\begin{split}\label{second_variation_nu_phi_interaction} 
\partial^{2}J_{\tau}(\alpha^{l}\varphi_{l}+\bar v) \tilde \nu_{i}\tilde \phi_{j, k} 
= &
\partial^{2}J_{\tau}(\alpha^{l}\varphi_{l})\tilde\nu_{i}\tilde\phi_{j, k} \\
= &
-\frac{2pr_{\alpha^{i}\varphi_{i}}}{k_{\tau}^{\frac{2}{p+1}+1}}
\int K(\alpha^{l}\varphi_{l})^{p-1}\tilde\nu_{i}\tilde\phi_{j, k}d\mu_{g_{0}}
\end{split}
\end{equation}
up to some $ o(\frac{1}{\lambda^{2}}) $. 
Indeed by \eqref{second_variation_evaluating} the crucial estimates to verify \eqref{second_variation_nu_phi_interaction} are 
\begin{equation}\label{eq:crucial}
\int K(\alpha^{l}\varphi_{l})^{p}\tilde\nu_{i}d\mu_{g_{0}}
=
o(\frac{1}{\lambda^2})
=
\int K(\alpha^{l}\varphi_{l})^{p}\tilde \phi_{k, j}d\mu_{g_{0}}.
\end{equation} 
These however follow easily by expansion and interaction estimates using 
\begin{equation*}
\langle \varphi_{l}, \phi_{k, j}\rangle_{L_{g_{0}}}=O(\frac{1}{\lambda^{2}}), \; \quad
\langle \nu_{i}, \phi_{k, j}\rangle_{L_{g_{0}}}=0, 
\quad 
L_{g_{0}}\varphi_{l}=4n(n-1)\varphi_{l}^{\frac{n+2}{n-2}}+o(1) 
\end{equation*}
in $W^{-1, 2}$ and Lemma \ref{lem_testing_with_v}. For the remaining integral in \eqref{second_variation_nu_phi_interaction} we then have
\begin{equation}\begin{split}\label{example_expansion_interaction}
\int K(\alpha^{l} &\varphi_{l})^{p-1}\tilde\nu_{i}\tilde\phi_{j, k}d\mu_{g_{0}}
= 
K_{j}\int (\alpha^{l}\varphi_{l})^{p-1}\tilde\nu_{i}\tilde\phi_{j, k}d\mu_{g_{0}} +o(\frac{1}{\lambda^{2}}) \\
= &
K_{j}\int_{\{\varphi_{j}>\sum_{j\neq l}\alpha^{l}\varphi_{l}\}} (\alpha^{l}\varphi_{l})^{p-1}\tilde\nu_{i}\tilde\phi_{j, k}d\mu_{g_{0}} \\
& +
O
(
\frac{1}{\lambda}
\sum_{j\neq l}
\Vert \varphi_{l}^{p-1}\varphi_{j}\Vert_{L^{\frac{p+1}{p}}}
)
+
o(\frac{1}{\lambda^{2}}) \\
= & 
K_{j}\alpha_{j}^{p-1}\int_{\{\varphi_{j}>\sum_{j\neq l}\alpha^{l}\varphi_{l}\}} \varphi_{j}^{p-1}\tilde\nu_{i}\tilde\phi_{j, k}d\mu_{g_{0}}\\
& +
O
(
\frac{1}{\lambda}
\sum_{j\neq l}
\Vert \varphi_{l}^{p-1}\varphi_{j}
+
\varphi_{l}\varphi_{j}^{p-1}\Vert_{L^{\frac{p+1}{p}}}
)
+
o(\frac{1}{\lambda^{2}}) 
\end{split}
\end{equation} 
and therefore using Lemma \ref{lem_interactions} with 
$ p = \frac{n+2}{n-2} - \tau $ 
\begin{equation*}\begin{split}
\int K(\alpha^{l} \varphi_{l})^{p-1}\tilde\nu_{i}\tilde\phi_{j, k}d\mu_{g_{0}}
= 
K_{j}\alpha_{j}^{p-1}\int \varphi_{j}^{p-1}\tilde\nu_{i}\tilde\phi_{j, k}d\mu_{g_{0}}
+
o(\frac{1}{\lambda^{2}}). 
\end{split}
\end{equation*} 
Then, since $ \Vert \tilde \nu_{i}\Vert=O(\frac{1}{\lambda}), \tau=O(\frac{1}{\lambda^{2}}) $ and $ \varepsilon_{r, s}=O(\frac{1}{\lambda^{n-2}}) $, we find
\begin{equation*}\begin{split}
\int K(\alpha^{l} \varphi_{l})^{p-1} \tilde \nu_{i}\tilde\phi_{j, k}d\mu_{g_{0}}
= &
K_{j}\alpha_{j}^{\frac{4}{n-2}}\int \varphi_{j}^{\frac{4}{n-2}} \tilde \nu_{i}\tilde\phi_{j, k}d\mu_{g_{0}}
+
o(\frac{1}{\lambda^{2}}) 
=
o(\frac{1}{\lambda^{2}}), 
\end{split} 
\end{equation*} 
where the last inequality follows from 
Lemma \ref{lem_emergence_of_the_regular_part} 
and $ \langle \phi_{k, j}, \tilde \nu_{i}\rangle_{L_{g_{0}}}=0 $. Thus
\begin{equation}\label{second_variation_nu_phi_2_phi_3_interaction}
\partial^{2}J_{\tau}(\alpha^{l}\varphi_{l}+\bar v)
\tilde \nu_{i}\tilde\phi_{k, j}=o(\frac{1}{\lambda^{2}})
\; \text{ for }\; k=2, 3.
\end{equation}
By exactly the same argumentswith $ \tilde \phi_{1, i}=O(\frac{1}{\lambda}) $ as for \eqref{eq:crucial} there holds
\begin{equation*}
\partial^{2}J_{\tau}(\alpha^{l}\varphi_{l}+\bar v)\tilde \phi_{1, i}\tilde \phi_{k, j}
=
\partial^{2}J_{\tau}(\alpha^{l}\varphi_{l}+\bar v) \frac{\phi_{1, i}}{\lambda} \phi_{k, j}
=
\frac{1}{\lambda}\partial^{2}J_{\tau}(\alpha^{l}\varphi_{l}) \varphi_{i} \phi_{k, j}
=
o(\frac{1}{\lambda^{2}})
\end{equation*}
for $ k=2, 3 $. Thus we arrive at
\begin{equation*} 
[\partial^{2}J_{\tau}(\alpha^{l}\varphi_{l}+\bar v)]_{\mathbb{B}} 
=
 \begin{pmatrix}
\frac{1}{\lambda^2} \mathbb{V}_{+} & 0 & 0 & 0 \\
0 &\frac{1}{\lambda^2} \mathbb{A}_{q-1, 0} & 0 & 0\\
0 &0 & \partial^{2}J_{\tau} \tilde \phi_{2} \tilde \phi_{2} & \partial^{2}J_{\tau} \tilde \phi_{2}\tilde \phi_{3} \\
0 & 0 & \partial^{2}J_{\tau} \tilde \phi_{3}\tilde \phi_{2} & \partial^{2}J_{\tau} \tilde \phi_{3}, \tilde \phi_{3} \\
\end{pmatrix}
+o(\frac{1}{\lambda^{2}}).
\end{equation*}

\

\noindent {\bf{Last 2x2 block.}}We are left with the estimate of
 $$ \partial^{2}J_{\tau}(\alpha^{k}\varphi_{k}+\bar v) \tilde \phi_{k, i} \tilde \phi_{l, j}= \partial^{2}J_{\tau}(\alpha^{k}\varphi_{k}+\bar v) \phi_{k, i}\phi_{l, j} $$ 
for $ k, l=2, 3 $. 
Using the fact that 
\begin{equation*}
\int \phi_{k, i} L_{g_{0}}(\alpha^{m}\varphi_{m}+\bar v) d\mu_{g_{0}} 
=
o(\frac{1}{\lambda})
=
\int \phi_{k, i} K(\alpha^{m}\varphi_{m}+\bar v)^{p} d\mu_{g_{0}}
\; \text{ for }\; k=2, 3, 
\end{equation*}
which follows from $ \Vert\bar v\Vert =O(\frac{1}{\lambda^{2}}) $, Lemma \ref{lem_emergence_of_the_regular_part} and Lemma 
\ref{lem_interactions}, wefind for \eqref{second_variation_evaluating}
\begin{equation}\begin{split}\label{second_variation_split_I_1_I_2}
\partial^{2} & J_{\tau}(\alpha^{m}\varphi_{m} + \bar v)\phi_{k, i}\phi_{l, j} \\
= &
\frac{2}{(k_{\tau})_{\alpha^{m}\varphi_{m}+\bar v}^{\frac{2}{p+1}}}
\int 
[ 
\phi_{k, i} L_{g_{0}} \phi_{l, j}- 
p\frac{r_{\alpha^{m}\varphi_{m}+\bar v}}{(k_{\tau})_{\alpha^{m}\varphi_{m}+\bar v}} 
K(\alpha^{m}\varphi_{m}+\bar v)^{p-1}\phi_{k, i}\phi_{l, j}
] 
d\mu_{g_{0}} 
\\
=: &
\frac{2I}{(k_{\tau})_{\alpha^{m}\varphi_{m}+\bar v}^{\frac{2}{p+1}}} 
=:
\frac{2(I_{1}-I_{2})}{(k_{\tau})_{\alpha^{m}\varphi_{m}+\bar v}^{\frac{2}{p+1}}} 
= 
\frac{2}{(c_{0}\alpha_{K, \tau}^{\frac{2n}{n-2}})^{\frac{n-2}{n}}}
(I_{1}-I_{2}) + o\left( \frac{1}{\lambda^2} \right). 
\end{split}
\end{equation}
In the latter formula, recalling \eqref{eq:kp} and \eqref{def_refined_neighbourhood}, 
we have used the fact that 
 $$ (k_{\tau})_{\alpha^{m}\varphi_{m}+\bar v}^{\frac{2}{p+1}} = 
(c_{0}\alpha_{K, \tau}^{\frac{2n}{n-2}})^{\frac{n-2}{n}} + o(1)
 $$ 
and that both $ I_1, I_2 =O( \frac{1}{\lambda^2})$. 
Let us first compute $ I_{2} $, for which we clearly have
\begin{equation*} 
\begin{split}
I_{2}
= &
p\frac{r_{\alpha^{m}\varphi_{m}+\bar v}}{(k_{\tau})_{\alpha^{m}\varphi_{m}+\bar v}} 
\int K(\alpha^{m}\varphi_{m})^{p-1}\phi_{k, i}\phi_{l, j}d\mu_{g_{0}}
\\
& +
p(p-1)\frac{r_{\alpha^{m}\varphi_{m}+\bar v}}{(k_{\tau})_{\alpha^{m}\varphi_{m}+\bar v}} 
\int K(\alpha^{m}\varphi_{m})^{p-2}\phi_{k, i}\phi_{l, j}
\bar v d\mu_{g_{0}} 
\end{split}
\end{equation*}
up to an error $ o(\frac{1}{\lambda^{2}}) $, as $ \Vert \bar v \Vert=O(\frac{1}{\lambda^{2}}) $, and therefore still up to an error $ o(\frac{1}{\lambda^{2}}) $ 
\begin{equation*}
\begin{split}
I_{2}
= &
p\frac{r_{\alpha^{m}\varphi_{m}+\bar v}}{(k_{\tau})_{\alpha^{m}\varphi_{m}+\bar v}} 
\int K(\alpha^{m}\varphi_{m})^{p-1}\phi_{k, i}\phi_{l, j}d\mu_{g_{0}}
\\
& +
4n(n-1)\frac{n+2}{n-2}\frac{4}{n-2}\frac{\alpha^{2}}{\alpha_{K, \tau}^{\frac{2n}{n-2}}} 
\int K(\alpha^{m}\varphi_{m})^{\frac{6-n}{n-2}}\phi_{k, i}\phi_{l, j}
\bar v d\mu_{g_{0}}.
\end{split} 
\end{equation*}
As due to $ d(a_i, a_j) \simeq 1 $ for $ i \neq j $ the interactions terms in 
\eqref{eq:eijm} are of order 
$$ 
\varepsilon_{i, j} 
=
O(\frac{1}{\lambda^{n-2}})
=
o(\frac{1}{\lambda^{2}}),$$
we find
\begin{equation*}
\begin{split}
I_{2}
= &
p\frac{r_{\alpha^{m}\varphi_{m}+\bar v}}{(k_{\tau})_{\alpha^{m}\varphi_{m}+\bar v}}\delta_{i, j} \alpha_{i}^{p-1}
\int K\varphi_{i}^{p-1}\phi_{k, i}\phi_{l, i}d\mu_{g_{0}}\\
& +
4n(n-1)\frac{n+2}{n-2}\frac{4}{n-2}\frac{\alpha^{2}}{\alpha_{K, \tau}^{\frac{2n}{n-2}}} \delta_{i, j}\alpha_{i}^{\frac{6-n}{n-2}}
\int K\varphi_{i}^{\frac{6-n}{n-2}}\phi_{k, i}\phi_{l, i}
\bar v d\mu_{g_{0}} 
\end{split}
\end{equation*}
up to an error $ o(\frac{1}{\lambda^{2}}) $. 
Up to the 
same error we may simplify this using\eqref{def_refined_neighbourhood} to 
\begin{equation*}
\begin{split}
I_{2}
= &
p\frac{r_{\alpha^{m}\varphi_{m}+\bar v}}{(k_{\tau})_{\alpha^{m}\varphi_{m}+\bar v}} \delta_{i, j}K_{i}\alpha_{i}^{p-1}
\int \varphi_{i}^{p-1}\phi_{k, i}\phi_{l, i}d\mu_{g_{0}} \\
& +
4n(n-1)\frac{n+2}{n-2}\delta_{i, j}
\underset{B_{\varepsilon}(a_{i})}{\int} \frac{\nabla^{2}K_{i}}{2K_{i}}x^{2}\varphi_{i}^{\frac{4}{n-2}}\phi_{k, i}\phi_{l, i}d\mu_{g_{0}}
\\
& +
4n(n-1)\frac{n+2}{n-2}\frac{4}{n-2}\delta_{i, j}
\alpha_{i}^{-1}
\int \varphi_{i}^{\frac{6-n}{n-2}}\phi_{k, i}\phi_{l, i}
\bar v d\mu_{g_{0}} 
\end{split}
\end{equation*}
for some $ \varepsilon>0 $ small and fixed. 
Moreover by orthogonality and \eqref{r/lambda_expansion}
\begin{equation*}
\frac{r_{\alpha^{i}\varphi_{i}+\bar v}}{(k_{\tau})_{\alpha^{i}\varphi_{i}+\bar v}}
=
\frac{r_{\alpha^{i}\varphi_{i}}}{(k_{\tau})_{\alpha^{i}\varphi_{i}}}
=
4n(n-1)\frac{\alpha^{2}}{\alpha_{K, \theta}^{p+1}}
(
1
-
(\frac{\bar c_{1}}{\bar c_{0}}-\frac{\tilde c_{1}}{\tilde c_{2}}\frac{\bar c_{2}}{ \bar c_{0}})\tau
)
+
o(\frac{1}{\lambda^2}), 
\end{equation*}
whence by\eqref{def_refined_neighbourhood} and the fact that $ p=\frac{n+2}{n-2}-\tau $ we arrive at
\begin{equation*}
\begin{split}
I_{2}
= &
4n(n-1)\frac{n+2}{n-2}
[
(
1
-
(\frac{n-2}{n+2}+\frac{\bar c_{1}}{\bar c_{0}}-\frac{\tilde c_{1}}{\tilde c_{2}}\frac{\bar c_{2}}{ \bar c_{0}})\tau
)
]
\lambda_{i}^{\theta}\delta_{i, j}
\int \varphi_{i}^{p-1}\phi_{k, i}\phi_{l, i}d\mu_{g_{0}} \\
& +
4n(n-1)\frac{n+2}{n-2}\delta_{i, j} 
\underset{B_{\varepsilon}(a_{i})}{\int}\frac{\nabla^{2}K_{i}}{2K_{i}}x^{2}\varphi_{i}^{\frac{4}{n-2}}\phi_{k, i}\phi_{l, i}d\mu_{g_{0}}
\\
& +
4n(n-1)\frac{n+2}{n-2}\frac{4}{n-2}\delta_{i, j}
\alpha_{i}^{-1}
\int \varphi_{i}^{\frac{6-n}{n-2}}\phi_{k, i}\phi_{l, i}
\bar v d\mu_{g_{0}}.
\end{split}
\end{equation*}
Let us compute the last integral above, which is of order $ O(\frac{1}{\lambda^{2}}) $ as $ \Vert \bar v \Vert $. Clearly
\begin{equation*}
\begin{split}
\frac{4}{n-2}\int & \varphi_{i}^{\frac{6-n}{n-2}}\phi_{k, i}\phi_{l, i}
\bar v d\mu_{g_{0}}
= 
\int d_{k, i}\varphi_{i}^{\frac{4}{n-2}}\phi_{l, i}
\bar v d\mu_{g_{0}} \\
= &
d_{k, i} \int \varphi_{i}^{\frac{4}{n-2}}\phi_{l, i}
\bar v d\mu_{g_{0}}
-
\int \varphi_{i}^{\frac{4}{n-2}}d_{k, i}\phi_{l, i}
\bar v d\mu_{g_{0}}
-
\int \varphi_{i}^{\frac{4}{n-2}}\phi_{l, i}
d_{k, i} \bar v d\mu_{g_{0}}. 
\end{split}
\end{equation*}
Due to orthogonality the first integral above is of order $ o(\frac{1}{\lambda^{2}}) $ 
and denoting by 
\begin{equation}\label{eq:hatw}
\widehat w=\Pi_{\langle \phi_{k, i}\rangle^{\perp_{L_{g_{0}}}}}w\; \text{ for }\;w\in W^{1, 2}(M, g_0)
\end{equation}
the orthogonal projection onto $ \langle \phi_{k, i}\rangle^{\perp_{L_{g_{0}}}} $ 
we have up to an error $ o(\frac{1}{\lambda^{2}}) $ 
\begin{equation*}
\begin{split}
\int \varphi_{i}^{\frac{4}{n-2}}d_{k, i}\phi_{l, i}
\bar v d\mu_{g_{0}}
= &
\int \varphi_{i}^{\frac{4}{n-2}}\widehat{d_{k, i}\phi_{l, i}} 
\bar v d\mu_{g_{0}}
\end{split}
\end{equation*}
due to the orthogonalities $ \langle \bar v, \phi_{k, i}\rangle_{L_{g_{0}}}=0 $ and 
 $ \Vert \bar v\Vert=O(\frac{1}{\lambda^{2}}) $. Hence, 
using the notation in \eqref{eq:hatw}, we arrive at
\begin{equation*}
\begin{split}
I_{2}
= &
4n(n-1)\frac{n+2}{n-2}
[
(
1
-
(\frac{n-2}{n+2}+\frac{\bar c_{1}}{\bar c_{0}}-\frac{\tilde c_{1}}{\tilde c_{2}}\frac{\bar c_{2}}{ \bar c_{0}})\tau
)
]
\lambda_{i}^{\theta}\delta_{i, j}
\int \varphi_{i}^{p-1}\phi_{k, i}\phi_{l, i}d\mu_{g_{0}} \\
& +
4n(n-1)\frac{n+2}{n-2}\delta_{i, j}
\int_{B_{c}(a_{i})} \frac{\nabla^{2}K_{i}}{2K_{i}}x^{2}\varphi_{i}^{\frac{4}{n-2}}\phi_{k, i}\phi_{l, i}d\mu_{g_{0}}
\\
& -
4n(n-1)\frac{n+2}{n-2}\delta_{i, j}
\alpha_{i}^{-1}
(
\int \varphi_{i}^{\frac{4}{n-2}}\widehat{d_{k, i}\phi_{l, i}}
\bar v d\mu_{g_{0}}
+
\int \varphi_{i}^{\frac{4}{n-2}}\phi_{l, i}
d_{k, i} \bar v d\mu_{g_{0}}
).
\end{split}
\end{equation*}
Due to $ \Vert \bar v \Vert=O(\frac{1}{\lambda^{2}}) $ we havestill up to a $ o(\frac{1}{\lambda^{2}}) $ 
\begin{equation*}
\begin{split}
\partial^{2}J_{\tau}(\alpha^{m}\varphi_{m})\bar v
= &
\frac{8n(n-1)}{(\bar c_{0}\alpha_{K, \tau}^{p+1})^{\frac{n-2}{n}}}
\left( \frac{L_{g_{0}}}{4n(n-1)}\bar v
-
\frac{n+2}{n-2}\sum_{m} 
\varphi_{m}^{\frac{4}{n-2}}\bar v
\right)
\end{split}
\end{equation*}
and we recall from \eqref{bar_v_solution_to} that 
\begin{equation*}
\partial^{2}J_{\tau}(\alpha^{m}\varphi_{m})\bar v
=
-\partial J_{\tau}(\alpha^{m}\varphi_{m})+o(\frac{1}{\lambda^{2}})
\; \text{ on }\; \langle \phi_{l, j}\rangle^{\perp_{L_{g_{0}}}}.
\end{equation*}
From this we deduce again by smallness of interactions terms $ \varepsilon_{i, j} $ 
\begin{equation*}
\begin{split}
\frac{n+2}{n-2}
\int \varphi_{i}^{\frac{4}{n-2}}\widehat{d_{k, i}\phi_{l, i}}
\bar v d\mu_{g_{0}}
= &
\frac{(\bar c_{0}\alpha_{K, \tau}^{p+1})^{\frac{n-2}{n}}}{8n(n-1)}\partial J_{\tau}(\alpha^{m}\varphi_{m})
\widehat{d_{k, i}\phi_{l, i}} 
+ 
 \frac{\langle \bar v, \widehat{d_{k, i}\phi_{l, i}}\rangle_{L_{g_{0}}} }{4n(n-1)}
\end{split}
\end{equation*}
and by orthogonality and Lemma \ref{lem_emergence_of_the_regular_part} there holds 
up to an error $ o(\frac{1}{\lambda^{2}}) $ 
\begin{equation*}
\begin{split}
\langle \bar v, \widehat{d_{k, i}\phi_{l, i}}\rangle_{L_{g_{0}}} 
= &
-
\langle d_{k, i} \bar v, \phi_{l, i}\rangle_{L_{g_{0}}} 
= 
-4n(n-1)\int \bar d_{k, i} v d_{l, i}\varphi^{\frac{n+2}{n-2}} d\mu_{g_{0}} \\
= &
-4n(n-1)\frac{n+2}{n-2}\int \bar \varphi_{i}^{\frac{4}{n-2}}d_{k, i} v \phi_{l, i}d\mu_{g_{0}}.
\end{split}
\end{equation*}
We therefore conclude that up to an error $ o(\frac{1}{\lambda^2}) $ 
\begin{equation*}
\begin{split}
I_{2}
= &
4n(n-1)\frac{n+2}{n-2}
[
(
1
-
(\frac{n-2}{n+2}+\frac{\bar c_{1}}{\bar c_{0}}-\frac{\tilde c_{1}}{\tilde c_{2}}\frac{\bar c_{2}}{ \bar c_{0}})\tau
)
]
\lambda_{i}^{\theta}\delta_{i, j}
\int \varphi_{i}^{p-1}\phi_{k, i}\phi_{l, i}d\mu_{g_{0}} \\
& +
4n(n-1)\frac{n+2}{n-2}\delta_{i, j} 
\underset{B_{\varepsilon}(a_{i})}{\int} \frac{\nabla^{2}K_{i}}{2K_{i}}x^{2}\varphi_{i}^{\frac{4}{n-2}}\phi_{k, i}\phi_{l, i}d\mu_{g_{0}} \\
&- 
4n(n-1)\delta_{i, j}
\alpha_{i}^{-1}
\frac{(\bar c_{0}\alpha_{K, \tau}^{p+1})^{\frac{n-2}{n}}}{8n(n-1)}\partial J_{\tau}(\alpha^{m}\varphi_{m})
\widehat{d_{k, i}\phi_{l, i}}, 
\end{split}
\end{equation*}
at which point $ \bar v $ has been eliminated from the main terms in the expansion. By Lemma \ref{lem_refined_gradient} we then have 
\begin{equation*}
\partial J_{\tau}(\alpha^{m}\varphi_{m})\lfloor_{\langle \phi_{k, i}\rangle }=o(\frac{1}{\lambda^{2}}), 
\end{equation*}
so we may pass from $ \widehat{d_{k, i}\phi_{l, i}} $ to $ d_{k, i}\phi_{l, i} $ in the above formulae and, 
as 
$$ \partial J_{\tau}(\alpha^{m}\varphi_{m})=O(\frac{1}{\lambda^{2}}),$$ 
we obtain
\begin{equation*}
\begin{split}
 \frac{(\bar c_{0}\alpha_{K, \tau}^{p+1})^{\frac{n-2}{n}}}{8n(n-1)}&
\partial J_{\tau}(\alpha^{m}\varphi_{m})d_{k, i}\phi_{l, i}\\
= &
-
\alpha^{m} 
\tau 
\underset{B_{\varepsilon}(a_{m})}{\int}
\biggr(
\varphi_{m}^{\frac{n+2}{n-2}}\ln(1+\lambda_{m}^{2}r^{2})^{\frac{n-2}{2}}
-
\frac{\bar c_{1}}{c_{1}}\varphi_{m}^{\frac{n+2}{n-2}} \\
& \quad\quad\quad\quad\quad\quad\quad\quad +
\frac{2}{n-2}\frac{\tilde c_{1}}{c_{2}}
\varphi_{m}^{\frac{4}{n-2}}\lambda_{m}\partial_{\lambda_{m}}\varphi_{m} 
\biggr)
d_{k, i}\phi_{l, i}
d\mu_{g_{0}} 
\\
&
+
\alpha^{m}
\tau 
\underset{B_{\varepsilon}(a_{m})}{\int}
\biggr(
\frac{\tilde c_{1}}{\tilde c_{2}}\frac{ \lambda_{m}^{2}r^{2}}{2n}\varphi_{m}^{\frac{n+2}{n-2}}
 -
\frac{\tilde c_{1}\bar c_{2}}{\tilde c_{2}c_{1}}\varphi_{m}^{\frac{n+2}{n-2}} \\
& \quad\quad\quad\quad\quad\quad\quad\quad +
\frac{2}{n-2}\frac{\tilde c_{1}}{c_{2}}\varphi_{m}^{\frac{4}{n-2}}\lambda_{m}\partial_{\lambda_{m}}\varphi_{m}
\biggr)d_{k, i}\phi_{l, i} d\mu_{g_{0}}
\\
& -
\alpha^{m}
\underset{B_{\varepsilon}(a_{m})}{\int}
(
\frac{\nabla^{2}K_{m}}{2K_{m}}
x^{2}
-
\frac{\Delta K_{m}}{2nK_{m}}r^{2}
)
\varphi_{m}^{\frac{n+2}{n-2}}d_{k, i}\phi_{l, i} d\mu_{g_{0}}. 
\end{split} 
\end{equation*}
Still $ \varepsilon_{i, j} = o(\frac{1}{\lambda^2}) $ we therefore arrive at 
\begin{equation*}
\begin{split}
I_{2}
= &
4n(n-1)\frac{n+2}{n-2}
[
(
1
-
(\frac{n-2}{n+2}+\frac{\bar c_{1}}{\bar c_{0}}-\frac{\tilde c_{1}}{\tilde c_{2}}\frac{\bar c_{2}}{ \bar c_{0}})\tau
)
]
\lambda_{i}^{\theta}\delta_{i, j}
\int \varphi_{i}^{p-1}\phi_{k, i}\phi_{l, i}d\mu_{g_{0}} \\
& +
4n(n-1)\frac{n+2}{n-2}\delta_{i, j} 
\underset{B_{\varepsilon}(a_{i})}{\int} \frac{\nabla^{2}K_{i}}{2K_{i}}x^{2}\varphi_{i}^{\frac{4}{n-2}}\phi_{k, i}\phi_{l, i}d\mu_{g_{0}}
\\
& -
4n(n-1)\delta_{i, j}
\biggr(
-
\tau 
\underset{B_{\varepsilon}(a_{i})}{\int}
\biggr(
\varphi_{i}^{\frac{n+2}{n-2}}\ln(1+\lambda_{i}^{2}r^{2})^{\frac{n-2}{2}}
-
\frac{\bar c_{1}}{c_{1}}\varphi_{i}^{\frac{n+2}{n-2}} \\
& \quad\quad\quad\quad\quad\quad\quad\quad\quad\quad\quad\quad\quad\quad +
\frac{2}{n-2}\frac{\tilde c_{1}}{c_{2}}
\varphi_{i}^{\frac{4}{n-2}}\lambda_{i}\partial_{\lambda_{i}}\varphi_{i} 
\biggr)
d_{k, i}\phi_{l, i}
d\mu_{g_{0}} 
\\
&\quad \quad \quad \quad \quad \quad \quad \quad 
+
\tau 
\underset{B_{\varepsilon}(a_{i})}{\int}
\biggr(
\frac{\tilde c_{1}}{\tilde c_{2}}\frac{ \lambda_{i}^{2}r^{2}}{2n}\varphi_{i}^{\frac{n+2}{n-2}}
 -
\frac{\tilde c_{1}\bar c_{2}}{\tilde c_{2}c_{1}}\varphi_{i}^{\frac{n+2}{n-2}} \\
& \quad\quad\quad\quad\quad\quad\quad\quad\quad\quad\quad\quad\quad\quad +
\frac{2}{n-2}\frac{\tilde c_{1}}{c_{2}}\varphi_{i}^{\frac{4}{n-2}}\lambda_{i}\partial_{\lambda_{i}}\varphi_{i}
\biggr)d_{k, i}\phi_{l, i} d\mu_{g_{0}}
\\
& \quad \quad \quad \quad \quad \quad \quad \quad 
-
\underset{B_{\varepsilon}(a_{i})}{\int}
(
\frac{\nabla^{2}K_{i}}{2K_{i}}
x^{2}
-
\frac{\Delta K_{i}}{2nK_{i}}r^{2}
)
\varphi_{i}^{\frac{n+2}{n-2}}d_{k, i}\phi_{l, i} d\mu_{g_{0}}
\biggr), 
\end{split}
\end{equation*}
up to some $ o(\frac{1}{\lambda^{2}}) $.
By oddness we may simplify this to
\begin{equation*}
\begin{split}
I_{2}
= &
4n(n-1)\frac{n+2}{n-2}
[
(
1
-
(\frac{n-2}{n+2}+\frac{\bar c_{1}}{\bar c_{0}}-\frac{\tilde c_{1}}{\tilde c_{2}}\frac{\bar c_{2}}{ \bar c_{0}})\tau
)
]\\
& \quad\quad\quad\quad\quad
\lambda_{i}^{\theta}\delta_{i, j}\delta_{k, l}
\int \varphi_{i}^{p-1} \phi_{k, i} \phi_{k, i} d\mu_{g_{0}} \\
& +
4n(n-1)\frac{n+2}{n-2}\delta_{i, j} \delta_{k, l}
\underset{B_{\varepsilon}(a_{i})}{\int}
\frac{\nabla^{2}K_{i}}{2K_{i}}x^{2}\varphi_{i}^{\frac{4}{n-2}}\phi_{k, i} \phi_{k, i} d\mu_{g_{0}}
\\
& -
4n(n-1)\delta_{i, j}\delta_{k, l}
\biggr(
-
\tau 
\underset{B_{\varepsilon}(a_{i})}{\int}
\biggr(
\varphi_{i}^{\frac{n+2}{n-2}}\ln(1+\lambda_{i}^{2}r^{2})^{\frac{n-2}{2}}
-
\frac{\bar c_{1}}{c_{1}}\varphi_{i}^{\frac{n+2}{n-2}} \\
& \quad\quad\quad\quad\quad\quad\quad\quad\quad\quad\quad\quad\quad\quad\quad
+
\frac{2}{n-2}\frac{\tilde c_{1}}{c_{2}}
\varphi_{i}^{\frac{4}{n-2}}\lambda_{i}\partial_{\lambda_{i}}\varphi_{i} 
\biggr)
d_{k, i}\phi_{k, i}
d\mu_{g_{0}} 
\\
&\quad \quad \quad \quad \quad \quad \quad \quad \quad \;
+
\tau 
\underset{B_{\varepsilon}(a_{i})}{\int}
\biggr(
\frac{\tilde c_{1}}{\tilde c_{2}}\frac{ \lambda_{i}^{2}r^{2}}{2n}\varphi_{i}^{\frac{n+2}{n-2}}
 -
\frac{\tilde c_{1}\bar c_{2}}{\tilde c_{2}c_{1}}\varphi_{i}^{\frac{n+2}{n-2}} \\
&\quad\quad\quad\quad\quad\quad\quad\quad\quad\quad\quad\quad\quad\quad\quad +
\frac{2}{n-2}\frac{\tilde c_{1}}{c_{2}}\varphi_{i}^{\frac{4}{n-2}}\lambda_{i}\partial_{\lambda_{i}}\varphi_{i}
\biggr)d_{k, i}\phi_{k, i} d\mu_{g_{0}}
\\
& \quad \quad \quad \quad \quad \quad \quad \quad \quad \;
-
\underset{B_{\varepsilon}(a_{i})}{\int}
(
\frac{\nabla^{2}K_{i}}{2K_{i}}
x^{2}
-
\frac{\Delta K_{i}}{2nK_{i}}r^{2}
)
\varphi_{i}^{\frac{n+2}{n-2}}d_{k, i}\phi_{k, i} d\mu_{g_{0}}
\biggr)
\end{split}
\end{equation*}
 By Lemma \ref{lem_emergence_of_the_regular_part}it follows that 
 for $ k=2, 3 $ andup to some $ o(\frac{1}{\lambda^{2}}) $ 
\begin{equation*}
\begin{split}
4n & (n-1) \frac{n+2}{n-2}\int \varphi_{i}^{\frac{4}{n-2}}\lambda_{i}\partial_{\lambda_{i}}\varphi_{i}d_{k, i}\phi_{k, i}d\mu_{g_{0}}
= 
\int L_{g_{0}}(\lambda_{i}\partial_{\lambda_{i}}\varphi_{i})d_{k, i}\phi_{k, i}d\mu_{g_{0}} \\
= &
\langle \lambda_{i}\partial_{\lambda_{i}}\varphi_{i}, (d_{k, i})^{2}\varphi_{i}\rangle_{L_{g_{0}}}
=
d_{k, i}\langle \lambda_{i}\partial_{\lambda_{i}}\varphi_{i}, d_{k, i}\varphi_{i}\rangle_{L_{g_{0}}}
-
\langle \lambda_{i}\partial_{\lambda_{i}}d_{k, i}\varphi_{i}, d_{k, i}\varphi_{i}\rangle_{L_{g_{0}}} \\
= &
d_{k, i}\langle \phi_{2, i}, \phi_{k, i}\rangle_{L_{g_{0}}}
-
\frac{1}{2}
\lambda_{i}\partial_{\lambda_{i}}
\Vert \phi_{k, i}^{2}\Vert_{L_{g_{0}}}=o(1), 
\end{split}
\end{equation*}
as $ \langle \phi_{2, i}, \phi_{k, i}\rangle_{L_{g_{0}}} $ and $ \Vert \phi_{k, i}^{2}\Vert_{L_{g_{0}}} $ are almost constant in $ a_{i} $ and $ \lambda_{i} $. 
Hence
\begin{equation*}
\begin{split}
\frac{I_{2}}{4n(n-1)}
= &
\frac{n+2}{n-2}
[
(
1
-
(\frac{n-2}{n+2}+\frac{\bar c_{1}}{\bar c_{0}}-\frac{\tilde c_{1}}{\tilde c_{2}}\frac{\bar c_{2}}{ \bar c_{0}})\tau
)
]
\lambda_{i}^{\theta}\delta_{i, j}\delta_{k, l}
\int \varphi_{i}^{p-1}\phi_{k, i} \phi_{k, i}d\mu_{g_{0}} \\
& +
\frac{n+2}{n-2}\delta_{i, j} \delta_{k, l}
\underset{B_{\varepsilon}(a_{i})}{\int}\frac{\nabla^{2}K_{i}}{2K_{i}}x^{2}\varphi_{i}^{\frac{4}{n-2}}\phi_{k, i} \phi_{k, i} d\mu_{g_{0}} \\
& -
\delta_{i, j}\delta_{k, l}
\biggr(
-
\tau 
\underset{B_{\varepsilon}(a_{i})}{\int}
\biggr(
\ln(1+\lambda_{i}^{2}r^{2})^{\frac{n-2}{2}}
-
\frac{\bar c_{1}}{c_{1}}
\biggr)
\varphi_{i}^{\frac{n+2}{n-2}}d_{k, i}\phi_{k, i}
d\mu_{g_{0}} 
\\
&\quad \quad \quad \quad \quad
+
\tau 
\underset{B_{\varepsilon}(a_{i})}{\int}
\biggr(
\frac{\tilde c_{1}}{\tilde c_{2}}\frac{ \lambda_{i}^{2}r^{2}}{2n}
 -
\frac{\tilde c_{1}\bar c_{2}}{\tilde c_{2}c_{1}}
\biggr) \varphi_{i}^{\frac{n+2}{n-2}}d_{k, i}\phi_{k, i} d\mu_{g_{0}} \\
& \quad \quad \quad \quad \quad -
\underset{B_{\varepsilon}(a_{i})}{\int}^
(
\frac{\nabla^{2}K_{i}}{2K_{i}}
x^{2}
-
\frac{\Delta K_{i}}{2nK_{i}}r^{2}
)
\varphi_{i}^{\frac{n+2}{n-2}}d_{k, i}\phi_{k, i} d\mu_{g_{0}}
\biggr).
\end{split}
\end{equation*}
Next for the first summand above we find that up to an error $ o(\frac{1}{\lambda^{2}}) $ 
\begin{equation*}
\begin{split}
\lambda_{i}^{\theta}\int& \varphi_{i}^{p-1}\phi_{k, i} \phi_{k, i} 
d\mu_{g_{0}} \\
& =
\int \varphi_{i}^{\frac{4}{n-2}}\phi_{k, i} \phi_{k, i}d\mu_{g_{0}}
+
\underset{B_{\varepsilon}(a_{i})}{\int}
\varphi_{i}^{\frac{4}{n-2}}( \lambda_{i}^{\theta}\varphi_{i}^{-\tau}-1)\phi_{k, i} \phi_{k, i}d\mu_{g_{0}} \\
= &
\frac{n-2}{n+2}
\int d_{k, i}\varphi_{i}^{\frac{n+2}{n-2}}\phi_{k, i}d\mu_{g_{0}}
+
\underset{B_{\varepsilon}(a_{i})}{\int}
\varphi_{i}^{\frac{4}{n-2}}((1+\lambda_{i}^{2}r^{2})^{\theta}-1)\phi_{k, i} \phi_{k, i}d\mu_{g_{0}} \\
= &
\frac{1}{4n(n-1)}\frac{n-2}{n+2}\langle \phi_{k, i}, \phi_{k, i}\rangle_{ L_{g_{0}}}
+
\theta \underset{B_{\varepsilon}(a_{i})}{\int}\varphi_{i}^{\frac{4}{n-2}}\ln(1+\lambda_{i}^{2}r^{2})
\phi_{k, i} \phi_{k, i} d\mu_{g_{0}}
\end{split}
\end{equation*}
using Lemma \ref{lem_emergence_of_the_regular_part} and properly expanding. Recalling \eqref{second_variation_split_I_1_I_2} we thus conclude
\begin{equation}\begin{split}\label{second_variation_nu_phi_2_phi_3_interaction_rough}
 \frac{(k_{\tau})_{\alpha^{m}\varphi_{m}+\bar v}^{\frac{2}{p+1}}}{8n(n-1)}
\hspace {-20pt} & \hspace {20pt}
\partial^{2}J_{\tau}(\alpha^{m}\varphi_{m}+\bar v)\phi_{k, i}\phi_{l, j} \\
= &
\int \frac{L_{g_{0}}}{4n(n-1)}\phi_{k, i}\phi_{l, j} d\mu_{g_{0}} 
- 
\frac{I_{2}}{4n(n-1)} \\
= &
\delta_{i, j}\delta_{k, l}
\biggr(
(
1+\frac{n+2}{n-2}(\frac{\bar c_{1}}{\bar c_{0}}-\frac{\tilde c_{1}}{\tilde c_{2}}\frac{\bar c_{2}}{ \bar c_{0}})
)
\tau
\int \varphi_{i}^{\frac{4}{n-2}}\phi_{k, i} \phi_{k, i}d\mu_{g_{0}} \\
& \quad \quad \quad \quad \quad -
\frac{n+2}{n-2}
 \tau 
\underset{B_{\varepsilon}(a_{i})}{\int}\varphi_{i}^{\frac{4}{n-2}}\ln(1+\lambda_{i}^{2}r^{2})^{\frac{n-2}{2}}
\phi_{k, i} \phi_{k, i} d\mu_{g_{0}}\\
& \quad \quad \quad \quad \quad -
\tau 
\underset{B_{\varepsilon}(a_{i})}{\int}
\biggr(
\ln(1+\lambda_{i}^{2}r^{2})^{\frac{n-2}{2}}
-
\frac{\bar c_{1}}{c_{1}}
\biggr)
\varphi_{i}^{\frac{n+2}{n-2}}d_{k, i}\phi_{k, i}
d\mu_{g_{0}} \\
& \quad \quad \quad \quad \quad +
\tau 
\underset{B_{\varepsilon}(a_{i})}{\int}
\biggr(
\frac{\tilde c_{1}}{\tilde c_{2}}\frac{ \lambda_{i}^{2}r^{2}}{2n}
 -
\frac{\tilde c_{1}\bar c_{2}}{\tilde c_{2}c_{1}}
\biggr) \varphi_{i}^{\frac{n+2}{n-2}}d_{k, i}\phi_{k, i} d\mu_{g_{0}}
\\
& \quad \quad \quad \quad \quad 
-
\underset{B_{\varepsilon}(a_{i})}{\int}
(
\frac{\nabla^{2}K_{i}}{2K_{i}}
x^{2}
-
\frac{\Delta K_{i}}{2nK_{i}}r^{2}
)
\varphi_{i}^{\frac{n+2}{n-2}}d_{k, i}\phi_{k, i} d\mu_{g_{0}} \\
&\quad \quad \quad \quad \quad -
\frac{n+2}{n-2}
\underset{B_{\varepsilon}(a_{i})}{\int} \frac{\nabla^{2}K_{i}}{2K_{i}}x^{2}\varphi_{i}^{\frac{4}{n-2}}\phi_{k, i} \phi_{k, i} d\mu_{g_{0}}
\biggr)
\end{split}
\end{equation}
and in particular for $ i = 1, \dots, q $, and $ k, l = 1, \dots, n $ 
we have up some $ o(\frac{1}{\lambda^{2}}) $ 
\begin{equation*} 
[\partial^{2}J_{\tau}(\alpha^{k}\varphi_{k}+\bar v)]_{\mathbb{B}} 
=
 \begin{pmatrix}
\frac{1}{\lambda^2} \mathbb{V}_{+} & 0 & 0 & 0 \\
0 &\frac{1}{\lambda^2} \mathbb{A}_{q-1, 0} & 0 & 0\\
0 &0 & \partial^{2}J_{\tau} \lambda_{i} \partial_{\lambda_{i}}\varphi_{i} \lambda_{i}\partial_{\lambda_{i}}\varphi_{i} & 0 \\
0 & 0 & 0 &
\partial^{2}J_{\tau} \frac{(\nabla_{a_{i}})_{k}}{\lambda_{i}}\varphi_{i} \frac{(\nabla_{a_{i}})_{l}}{\lambda_{i}}\varphi_{i}\\
\end{pmatrix}. 
\end{equation*}

\

\noindent {\bf{Last diagonal terms.}} 
Concerning $ \lambda $-derivatives we note, that mixed derivatives in different $ \lambda_{i} $   are 
of order $ \lambda^{2-n}= o(\lambda^{-2})$, since $ n \geq 5 $. Therefore 
it is sufficient to compute second derivatives with respect to the same $ \lambda_{i} $. 
This corresponds to 
\begin{equation*}\begin{split}
\frac{(k_{\tau})_{\alpha^{m}\varphi_{m}+\bar v}^{\frac{2}{p+1}}}{8n(n-1)}
&
\partial^{2}J_{\tau}(\alpha^{m}\varphi_{m}+\bar v)(\lambda_{i}\partial_{\lambda_{i}}\varphi_{i})^{2} \\
= &
(
1+\frac{n+2}{n-2}(\frac{\bar c_{1}}{\bar c_{0}}-\frac{\tilde c_{1}}{\tilde c_{2}}\frac{\bar c_{2}}{ \bar c_{0}})
)
\tau
\int \varphi_{i}^{\frac{4}{n-2}}\phi_{k, i} \phi_{k, i}d\mu_{g_{0}}
\\
&
 -
\frac{n+2}{n-2}
 \tau 
\underset{B_{\varepsilon}(a_{i})}{\int}\varphi_{i}^{\frac{4}{n-2}}\ln(1+\lambda_{i}^{2}r^{2})^{\frac{n-2}{2}}
\vert\lambda_{i}\partial_{\lambda_{i}}\varphi_{i}\vert^{2}d\mu_{g_{0}}\\
&
-
\tau 
\underset{B_{\varepsilon}(a_{i})}{\int}
\biggr(
\ln(1+\lambda_{i}^{2}r^{2})^{\frac{n-2}{2}}
-
\frac{\bar c_{1}}{c_{1}}
\biggr)
\varphi_{i}^{\frac{n+2}{n-2}}(\lambda_{i}\partial_{\lambda_{i}})^{2}\varphi_{i}
d\mu_{g_{0}} \\
& +
\tau 
\underset{B_{\varepsilon}(a_{i})}{\int}
\biggr(
\frac{\tilde c_{1}}{\tilde c_{2}}\frac{ \lambda_{i}^{2}r^{2}}{2n}
 -
\frac{\tilde c_{1}\bar c_{2}}{\tilde c_{2}c_{1}}
\biggr) \varphi_{i}^{\frac{n+2}{n-2}}(\lambda_{i}\partial_{\lambda_{i}})^{2}\varphi_{i} d\mu_{g_{0}}
\\
&
-
\underset{B_{\varepsilon}(a_{i})}{\int}
(
\frac{\nabla^{2}K_{i}}{2K_{i}}
x^{2}
-
\frac{\Delta K_{i}}{2nK_{i}}r^{2}
)
\varphi_{i}^{\frac{n+2}{n-2}}(\lambda_{i}\partial_{\lambda_{i}})^{2}\varphi_{i} d\mu_{g_{0}} \\
& -
\frac{n+2}{n-2}
\underset{B_{\varepsilon}(a_{i})}{\int} \frac{\nabla^{2}K_{i}}{2K_{i}}x^{2}\varphi_{i}^{\frac{4}{n-2}}\vert \lambda_{i}\partial_{\lambda_{i}}\varphi_{i}\vert^{2}d\mu_{g_{0}}.
\end{split}
\end{equation*}
The second-last summand vanishes and 
$$ 
\int \varphi_{i}^{p-1}\phi_{k, i} \phi_{k, i}d\mu_{g_{0}} 
= 
c_{k} + o(1), 
$$
cf. Lemma \ref{lem_interactions}, whence
\begin{equation*}\begin{split}
\frac{(k_{\tau})_{\alpha^{m}\varphi_{m}+\bar v}^{\frac{2}{p+1}}}{8n(n-1)}
&
\partial^{2}J_{\tau}(\alpha^{m}\varphi_{m}+\bar v)(\lambda_{i}\partial_{\lambda_{i}}\varphi_{i})^{2} 
= 
c_{2}
(
1+\frac{n+2}{n-2}(\frac{\bar c_{1}}{\bar c_{0}}-\frac{\tilde c_{1}}{\tilde c_{2}}\frac{\bar c_{2}}{ \bar c_{0}})
)
\tau
\\
&
 -
\frac{n+2}{n-2}
 \tau 
\underset{B_{\varepsilon}(a_{i})}{\int}\varphi_{i}^{\frac{4}{n-2}}\ln(1+\lambda_{i}^{2}r^{2})^{\frac{n-2}{2}}
\vert\lambda_{i}\partial_{\lambda_{i}}\varphi_{i}\vert^{2}d\mu_{g_{0}} \\
& -
\tau 
\underset{B_{\varepsilon}(a_{i})}{\int}
\biggr(
\ln(1+\lambda_{i}^{2}r^{2})^{\frac{n-2}{2}}
-
\frac{\bar c_{1}}{c_{1}}
\biggr)
\varphi_{i}^{\frac{n+2}{n-2}}(\lambda_{i}\partial_{\lambda_{i}})^{2}\varphi_{i}
d\mu_{g_{0}} 
\\
&
+
\tau 
\underset{B_{\varepsilon}(a_{i})}{\int}
\biggr(
\frac{\tilde c_{1}}{\tilde c_{2}}\frac{ \lambda_{i}^{2}r^{2}}{2n}
 -
\frac{\tilde c_{1}\bar c_{2}}{\tilde c_{2}c_{1}}
\biggr) \varphi_{i}^{\frac{n+2}{n-2}}(\lambda_{i}\partial_{\lambda_{i}})^{2}\varphi_{i} d\mu_{g_{0}} \\
& -
\frac{n+2}{n-2}\frac{\Delta K_{i}}{2n K_{i}}
\underset{B_{\varepsilon}(a_{i})}{\int} r^{2}\varphi_{i}^{\frac{4}{n-2}}\vert \lambda_{i}\partial_{\lambda_{i}}\varphi_{i}\vert^{2}d\mu_{g_{0}}.
\end{split}
\end{equation*}
Moreover
\begin{equation*}
\begin{split}
\int \varphi_{i}^{\frac{n+2}{n-2}}
&
(\lambda_{i}\partial_{\lambda_{i}})^{2}\varphi_{i}d\mu_{g_{0}}
= 
\lambda_{i}\partial_{\lambda_{i}}\int \varphi_{i}^{\frac{n+2}{n-2}}\lambda_{i}\partial_{\lambda_{i}}\varphi_{i}d\mu_{g_{0}} \\
& -
\frac{n+2}{n-2}\int \varphi_{i}^{\frac{4}{n-2}}\vert \lambda_{i}\partial_{\lambda_{i}}\varphi_{i}\vert ^{2}d\mu_{g_{0}}
=
-\frac{n+2}{n-2}c_{2}
+
o(1), 
\end{split}
\end{equation*}
and 
\begin{equation*}
\begin{split}
\frac{n+2}{n-2}
\underset{B_{\varepsilon}(a_{i})}{\int} & r^{2}\varphi_{i}^{\frac{4}{n-2}}
\vert \lambda_{i}\partial_{\lambda_{i}}\varphi_{i}\vert^{2}d\mu_{g_{0}}
=
\underset{B_{\varepsilon}(a_{i})}{\int} r^{2}\lambda_{i}\partial_{\lambda_{i}}\varphi_{i}\lambda_{i}\partial_{\lambda_{i}}\varphi_{i}^{\frac{n+2}{n-2}}d\mu_{g_{0}} \\
= &
\lambda_{i}\partial_{\lambda_{i}}
\underset{B_{\varepsilon}(a_{i})}{\int} r^{2}\lambda_{i}\partial_{\lambda_{i}}\varphi_{i}\varphi_{i}^{\frac{n+2}{n-2}}d\mu_{g_{0}} 
-
\underset{B_{\varepsilon}(a_{i})}{\int} r^{2}\varphi_{i}^{\frac{n+2}{n-2}}(\lambda_{i}\partial_{\lambda_{i}})^{2}\varphi_{i}d\mu_{g_{0}}.
\end{split}
\end{equation*}
Thus recalling\eqref{def_refined_neighbourhood}, in particular 
 $ \tilde c_{1}\tau +\tilde c_{2}\frac{\Delta K_{i}}{K_{i}\lambda_{i}^{2}}=o(\frac{1}{\lambda^{2}}) $, 
we arrive at 
\begin{equation*}\begin{split}
\frac{(k_{\tau})_{\alpha^{m}\varphi_{m}+\bar v}^{\frac{2}{p+1}}}{8n(n-1)}
&
\partial^{2}J_{\tau}(\alpha^{m}\varphi_{m}+\bar v)(\lambda_{i}\partial_{\lambda_{i}}\varphi_{i})^{2} 
= 
c_{2}
\tau \\
& -
\frac{n+2}{n-2}
 \tau 
\underset{B_{\varepsilon}(a_{i})}{\int}\varphi_{i}^{\frac{4}{n-2}}\ln(1+\lambda_{i}^{2}r^{2})^{\frac{n-2}{2}}
\vert\lambda_{i}\partial_{\lambda_{i}}\varphi_{i}\vert^{2}d\mu_{g_{0}}\\
&
-
\tau 
\underset{B_{\varepsilon}(a_{i})}{\int}
\ln(1+\lambda_{i}^{2}r^{2})^{\frac{n-2}{2}}
\varphi_{i}^{\frac{n+2}{n-2}}(\lambda_{i}\partial_{\lambda_{i}})^{2}\varphi_{i}
d\mu_{g_{0}} \\
& + 
\frac{\tilde c_{1}}{\tilde c_{2}}\frac{\tau}{2n} 
\lambda_{i}^{3}\partial_{\lambda_{i}}
\underset{B_{\varepsilon}(a_{i})}{\int} r^{2}\lambda_{i}\partial_{\lambda_{i}}\varphi_{i}\varphi_{i}^{\frac{n+2}{n-2}}d\mu_{g_{0}}, 
\end{split}
\end{equation*}
and for the last integral above we find passing to integration over $ \R^{n} $ 
\begin{equation*}
\begin{split}
\lambda_{i}\partial_{\lambda_{i}} &
\underset{B_{\varepsilon}(a_{i})}{\int} r^{2}\lambda_{i}\partial_{\lambda_{i}}\varphi_{i}\varphi_{i}^{\frac{n+2}{n-2}}d\mu_{g_{0}} 
= 
\lambda_{i}\partial_{\lambda_{i}}
\int_{\R^{n}} r^{2}\lambda_{i}\partial_{\lambda_{i}}\delta_{0, \lambda_{i}}\delta_{0, \lambda_{i}}^{\frac{n+2}{n-2}} dx \\
= &
\frac{n-2}{2n}(\lambda_{i} \partial_{\lambda_{i}})^{2}
\int r^{2}\delta_{0, \lambda_{i}}^{\frac{2n}{n-2}}dx
=
\frac{n-2}{2n}(\lambda_{i} \partial_{\lambda_{i}})^{2}
(\lambda_{i}^{-2}\int_{\R^{n}}\frac{r^{2}}{(1+r^{2})^{n}}dx) \\
= &
\frac{n-2}{8n}\frac{\bar c_{2}}{\lambda_{i}^{2}}
\end{split}
\end{equation*}
up to some error of order $ o(1) $.
Consequently
\begin{equation*}\begin{split}
\frac{(k_{\tau})_{\alpha^{m}\varphi_{m}+\bar v}^{\frac{2}{p+1}}}{8n(n-1)}
&
\partial^{2}J_{\tau}(\alpha^{m}\varphi_{m}+\bar v)(\lambda_{i}\partial_{\lambda_{i}}\varphi_{i})^{2} 
= 
c_{2}
(
1
+
\frac{n-2}{16n^2}\frac{\tilde c_{1}\bar c_{2}}{\tilde c_{2}c_{2}}
)
\tau
\\
&
 -
\frac{n+2}{n-2}
 \tau 
 \int_{B_{\varepsilon}(a_{i})}\varphi_{i}^{\frac{4}{n-2}}\ln(1+\lambda_{i}^{2}r^{2})^{\frac{n-2}{2}}
\vert\lambda_{i}\partial_{\lambda_{i}}\varphi_{i}\vert^{2}d\mu_{g_{0}} \\
& -
\tau 
\underset{B_{\varepsilon}(a_{i})}{\int}
\ln(1+\lambda_{i}^{2}r^{2})^{\frac{n-2}{2}}
\varphi_{i}^{\frac{n+2}{n-2}}(\lambda_{i}\partial_{\lambda_{i}})^{2}\varphi_{i}
d\mu_{g_{0}}.
\end{split}
\end{equation*}
Finally we calculate passing to integration over $ \R^{n} $ and up to a $ o(1) $ 
\begin{equation*}
\begin{split}
\frac{n+2}{n-2} &
\underset{B_{\varepsilon}(a_{i})}{\int}\varphi_{i}^{\frac{4}{n-2}}\ln(1 +\lambda_{i}^{2}r^{2})^{\frac{n-2}{2}}
\vert\lambda_{i}\partial_{\lambda_{i}}\varphi_{i}\vert^{2}d\mu_{g_{0}} \\
=&
\int_{\R^{n}}
\ln(1+\lambda_{i}^{2}r^{2})^{\frac{n-2}{2}}
\lambda_{i}\partial_{\lambda_{i}}\delta_{0, \lambda_{i}}
\lambda_{i}\partial_{\lambda_{i}}\delta_{0, \lambda_{i}}^{\frac{n+2}{n-2}}dx \\
= & 
\lambda_{i}\partial_{\lambda_{i}}
\int_{\R^{n}}
\ln(1+\lambda_{i}^{2}r^{2})^{\frac{n-2}{2}}
\lambda_{i}\partial_{\lambda_{i}}\delta_{0, \lambda_{i}}
\delta_{0, \lambda_{i}}^{\frac{n+2}{n-2}}dx\\
&-
(n-2)
\int_{\R^{n}}
\frac{\lambda_{i}^{2}r^{2}}{1+\lambda_{i}^{2}r^{2}}
\lambda_{i}\partial_{\lambda_{i}}\delta_{0, \lambda_{i}}
\delta_{0, \lambda_{i}}^{\frac{n+2}{n-2}}dx \\
& -
\int_{\R^{n}}
\ln(1+\lambda_{i}^{2}r^{2})^{\frac{n-2}{2}}
(\lambda_{i}\partial_{\lambda_{i}})^{2}\delta_{0, \lambda_{i}}
\delta_{0, \lambda_{i}}^{\frac{n+2}{n-2}}dx, 
\end{split}
\end{equation*}
where the first summand above vanishes by rescaling, 
and we are reduced to 
\begin{equation*} \begin{split}
\frac{(k_{\tau})_{\alpha^{m}\varphi_{m}+\bar v}^{\frac{2}{p+1}}}{8n(n-1)}
\partial^{2}J_{\tau}(\alpha^{m}\varphi_{m}+\bar v)(\lambda_{i}\partial_{\lambda_{i}}\varphi_{i})^{2} \\
= 
c_{2}
(
1
+
\frac{n-2}{16n^2}\frac{\tilde c_{1}\bar c_{2}}{\tilde c_{2}c_{2}}
)
\tau
+
(n- 2) \tau
\int_{\R^{n}}
\frac{\lambda_{i}^{2}r^{2}}{1+\lambda_{i}^{2}r^{2}}
\lambda_{i}\partial_{\lambda_{i}}\delta_{0, \lambda_{i}}
\delta_{0, \lambda_{i}}^{\frac{n+2}{n-2}}dx
, 
\end{split}
\end{equation*}
where up to some $ o(1) $ and with 
$
\hat c_{3}
=
-
\underset{\R^{n}}{\int}
\frac{r^{2}(1-r^{2})}{(1+r^{2})^{n+2}}dx
$ 
\begin{equation}\label{c_3}
\begin{split}
\underset{\R^{n}}{\int} 
\frac{\lambda_{i}^{2}r^{2}}{1+\lambda_{i}^{2}r^{2}}
\lambda_{i}\partial_{\lambda_{i}}\delta_{0, \lambda_{i}}
\delta_{0, \lambda_{i}}^{\frac{n+2}{n-2}} dx
= 
\frac{n-2}{2}
\underset{\R^{n}}{\int}
\frac{r^{2}(1-r^{2})}{(1+r^{2})^{n+2}}dx
=
-
\frac{n-2}{2}\hat c_{3}
.
\end{split}
\end{equation}
By an explicit computation of the above constants 
we conclude that 
\begin{equation*}\begin{split}
\frac{(k_{\tau})_{\alpha^{m}\varphi_{m}+\bar v}^{\frac{2}{p+1}}}{8n(n-1)}
& \partial^{2}J_{\tau}(\alpha^{m}\varphi_{m}+\bar v)(\lambda_{i}\partial_{\lambda_{i}}\varphi_{i})^{2} \\
= &
\left(
c_{2} 
(
1
+
\frac{n-2}{16n^2}\frac{\tilde c_{1}\bar c_{2}}{\tilde c_{2}c_{2}}
)
-
\frac{(n-2)^{2}}{2}\hat c_{3}
\right)
\tau 
= 
\frac{(n-2)^{2}\Gamma^{2}(\frac{n}{2})}{128 n \Gamma(n+1)}
\tau
\end{split}
\end{equation*}
up to an error $ o(\frac{1}{\lambda^{2}}) $.
Thence with $ i = 1, \dots, q $ and $ k, l = 1, \dots, n $ 
we derive
\begin{equation*} 
[\partial^{2}J_{\tau}(\alpha^{k}\varphi_{k}+\bar v)]_{\mathbb{B}} 
=
 \begin{pmatrix}
\frac{1}{\lambda^2} \mathbb{V}_{+} & 0 & 0 & 0 \\
0 & \frac{1}{\lambda^2} \mathbb{A}_{q-1, 0} & 0 & 0\\
0 &0 & \frac{1}{\lambda^2} \mathbb{\Lambda}_{+} \\
0 & 0 & 0 &
\partial^{2}J_{\tau} \frac{(\nabla_{a_{i}})_{k}}{\lambda_{i}}\varphi_{i} \frac{(\nabla_{a_{i}})_{l}}{\lambda_{i}}\varphi_{i} \\
\end{pmatrix}
\end{equation*}
up to $ o(\frac{1}{\lambda^{2}}) $, where $ \mathbb{\Lambda}_{+}>0 $ is as in the statement.
For instance consider
\begin{equation*}\begin{split}
\frac{(k_{\tau})_{\alpha^{m}\varphi_{m}+\bar v}^{\frac{2}{p+1}}}{8n(n-1)}
&
\partial^{2}J_{\tau}(\alpha^{m}\varphi_{m}+\bar v)
(\frac{(\nabla_{a_{i}})_{_{1}}}{\lambda_{i}}\varphi_{i})^{2}
 \\
= &
(
1+\frac{n+2}{n-2}(\frac{\bar c_{1}}{\bar c_{0}}-\frac{\tilde c_{1}}{\tilde c_{2}}\frac{\bar c_{2}}{ \bar c_{0}})
)
\tau
\int \varphi_{i}^{\frac{4}{n-2}}\vert \frac{(\nabla_{a_{i}})_{_{1}}}{\lambda_{i}}\varphi_{i}\vert^{2} d\mu_{g_{0}} \\\
& -
\frac{n+2}{n-2}
 \tau 
\underset{B_{\varepsilon}(a_{i})}{\int}\varphi_{i}^{\frac{4}{n-2}}\ln(1+\lambda_{i}^{2}r^{2})^{\frac{n-2}{2}}
\vert \frac{(\nabla_{a_{i}})_{_{1}}}{\lambda_{i}}\varphi_{i}\vert^{2}d\mu_{g_{0}}\\
&
 -
\tau 
\underset{B_{\varepsilon}(a_{i})}{\int}
\biggr(
\ln(1+\lambda_{i}^{2}r^{2})^{\frac{n-2}{2}}
-
\frac{\bar c_{1}}{c_{1}}
\biggr)
\varphi_{i}^{\frac{n+2}{n-2}} 
(\frac{(\nabla_{a_{i}})_{_{1}}}{\lambda_{i}})^{2}\varphi_{i}
d\mu_{g_{0}} \\
& +
\tau 
\underset{B_{\varepsilon}(a_{i})}{\int}
\biggr(
\frac{\tilde c_{1}}{\tilde c_{2}}\frac{ \lambda_{i}^{2}r^{2}}{2n}
 -
\frac{\tilde c_{1}\bar c_{2}}{\tilde c_{2}c_{1}}
\biggr) \varphi_{i}^{\frac{n+2}{n-2}}(\frac{(\nabla_{a_{i}})_{_{1}}}{\lambda_{i}})^{2}\varphi_{i} d\mu_{g_{0}}
\\
& 
-
\underset{B_{\varepsilon}(a_{i})}{\int}
(
\frac{\nabla^{2}K_{i}}{2K_{i}}
x^{2}
-
\frac{\Delta K_{i}}{2nK_{i}}r^{2}
)
\varphi_{i}^{\frac{n+2}{n-2}}(\frac{(\nabla_{a_{i}})_{_{1}}}{\lambda_{i}})^{2}\varphi_{i} d\mu_{g_{0}} \\
& -
\frac{n+2}{n-2}
\underset{B_{\varepsilon}(a_{i})}{\int} \frac{\nabla^{2}K_{i}}{2K_{i}}x^{2}\varphi_{i}^{\frac{4}{n-2}}\vert \frac{(\nabla_{a_{i}})_{_{1}}}{\lambda_{i}}\varphi_{i}\vert^{2}d\mu_{g_{0}}.
\end{split}
\end{equation*}
At this point some simplifications occur. From the relation 
\begin{equation*}
\tilde c_{1}\tau + \tilde c_{2}\frac{\Delta K_{i}}{K_{i}\lambda_{i}^{2}}=o(\frac{1}{\lambda^{2}})
\end{equation*} 
we obtain cancellation of the terms involving $ \Delta K_i $ and $ \frac{\tilde c_{1}}{\tilde c_{2}}\frac{ \lambda_{i}^{2}r^{2}}{2n} $. 
Using 
 $$ 
\int \varphi_{i}^{\frac{4}{n-2}}\vert \frac{(\nabla_{a_{i}})_{_{1}}}{\lambda_{i}}\varphi_{i}\vert^{2}d\mu_{g_{0}}=\frac{c_{3}}{n} + o(1)
$$
and 
$$
\quad\int \varphi_{i}^{\frac{n+2}{n-2}}
(\frac{(\nabla_{a_{i}})_{_{1}}}{\lambda_{i}})^{2}\varphi_{i}
d\mu_{g_{0}}
=
-\frac{n+2}{n-2}\frac{c_{3}}{n}
+
o(1)
 $$ 
as well as $ (\frac{\bar c_{1}}{\bar c_{0}}-\frac{\tilde c_{1}}{\tilde c_{2}}\frac{\bar c_{2}}{ \bar c_{0}}) \frac{c_3}{n}
=(\frac{\bar c_{1}}{c_{1}}-\frac{\tilde c_{1}}{\tilde c_{2}}\frac{\bar c_{2}}{c_{1}})\frac{c_2}{n} $ due $ \bar c_{0}=c_{1} $ 
and $ c_2 = c_3 $, we find
\begin{equation*}\begin{split}
\frac{(k_{\tau})_{\alpha^{m}\varphi_{m}+\bar v}^{\frac{2}{p+1}}}{8n(n-1)}
&
\partial^{2}J_{\tau}(\alpha^{m}\varphi_{m}+\bar v)
(\frac{(\nabla_{a_{i}})_{_{1}}}{\lambda_{i}}\varphi_{i})^{2}
 \\
= &
\frac{c_{3}}{n}
\tau
 -
\frac{n+2}{n-2}
 \tau 
\underset{B_{\varepsilon}(a_{i})}{\int}\varphi_{i}^{\frac{4}{n-2}}\ln(1+\lambda_{i}^{2}r^{2})^{\frac{n-2}{2}}
\vert \frac{(\nabla_{a_{i}})_{_{1}}}{\lambda_{i}}\varphi_{i}\vert^{2}d\mu_{g_{0}} \\
& -
\tau 
\underset{B_{\varepsilon}(a_{i})}{\int}
\ln(1+\lambda_{i}^{2}r^{2})^{\frac{n-2}{2}}
\varphi_{i}^{\frac{n+2}{n-2}} 
(\frac{(\nabla_{a_{i}})_{_{1}}}{\lambda_{i}})^{2}\varphi_{i}
d\mu_{g_{0}} 
\\
& 
-
\underset{B_{\varepsilon}(a_{i})}{\int}
\frac{\nabla^{2}K_{i}}{2K_{i}}
x^{2}
\varphi_{i}^{\frac{n+2}{n-2}}(\frac{(\nabla_{a_{i}})_{_{1}}}{\lambda_{i}})^{2}\varphi_{i} d\mu_{g_{0}} \\
& -
\frac{n+2}{n-2}
\underset{B_{\varepsilon}(a_{i})}{\int} \frac{\nabla^{2}K_{i}}{2K_{i}}x^{2}\varphi_{i}^{\frac{4}{n-2}}\vert \frac{(\nabla_{a_{i}})_{_{1}}}{\lambda_{i}}\varphi_{i}\vert^{2}d\mu_{g_{0}}.
\end{split}
\end{equation*}
Moreover we have, passing to integration over $ \R^{n} $, up to an error $ o(1) $ 
\begin{equation*}
\begin{split}
\frac{n+2}{n-2} &
\underset{B_{\varepsilon}(a_{i})}{\int}\varphi_{i}^{\frac{4}{n-2}}\ln(1+\lambda_{i}^{2}r^{2})^{\frac{n-2}{2}}
\vert \frac{(\nabla_{a_{i}})_{_{1}}}{\lambda_{i}}\varphi_{i}\vert^{2}d\mu_{g_{0}} \\
= &
\int_{\R^{n}}\ln(1+\lambda_{i}^{2}r^{2})^{\frac{n-2}{2}}
\frac{(\nabla_{a_{i}})_{_{1}}}{\lambda_{i}}\delta_{0, \lambda_{i}}^{\frac{n+2}{n-2}}
\frac{(\nabla_{a_{i}})_{_{1}}}{\lambda_{i}}\delta_{0, \lambda_{i}} dx \\
= &
-(n-2)
\int_{\R^{n}}\frac{\lambda_{i}x_{1}}{1+\lambda_{i}^{2}r^{2}}
\delta_{0, \lambda_{i}}^{\frac{n+2}{n-2}} 
\frac{(\nabla_{a_{i}})_{_{1}}}{\lambda_{i}}\delta_{0, \lambda_{i}} dx \\
& -
\int_{\R^{n}}\ln(1+\lambda_{i}^{2}r^{2})^{\frac{n-2}{2}}
\delta_{0, \lambda_{i}}^{\frac{n+2}{n-2}}
(\frac{(\nabla_{a_{i}})_{_{1}}}{\lambda_{i}})\delta_{0, \lambda_{i}} dx \\
\end{split} 
\end{equation*}
and find for the first summand 
\begin{equation*}
\begin{split}
(n-2)
\int_{\R^{n}}\frac{\lambda_{i}x_{1}}{1+\lambda_{i}^{2}r^{2}}
\delta_{0, \lambda_{i}}^{\frac{n+2}{n-2}}
\frac{(\nabla_{a_{i}})_{_{1}}}{\lambda_{i}}\delta_{0, \lambda_{i}}dx
= & 
-
\int_{\R^{n}}\delta_{0, \lambda_{i}}^{\frac{4}{n-2}}
\vert\frac{(\nabla_{a_{i}})_{_{1}}}{\lambda_{i}}\delta_{0, \lambda_{i}}\vert^{2}
dx =
-\frac{c_{3}}{n}.
\end{split}
\end{equation*}
We therefore are left with 
\begin{equation*}\begin{split}
\frac{(k_{\tau})_{\alpha^{m}\varphi_{m}+\bar v}^{\frac{2}{p+1}}}{8n(n-1)}
&
\partial^{2}J_{\tau}(\alpha^{m}\varphi_{m}+\bar v)
(\frac{(\nabla_{a_{i}})_{_{1}}}{\lambda_{i}}\varphi_{i})^{2}
 \\
= &
-
\underset{B_{\varepsilon}(a_{i})}{\int}
\frac{\nabla^{2}K_{i}}{2K_{i}}
x^{2}
\varphi_{i}^{\frac{n+2}{n-2}}(\frac{(\nabla_{a_{i}})_{_{1}}}{\lambda_{i}})^{2}\varphi_{i} d\mu_{g_{0}} \\
& -
\frac{n+2}{n-2}
\underset{B_{\varepsilon}(a_{i})}{\int} \frac{\nabla^{2}K_{i}}{2K_{i}}x^{2}\varphi_{i}^{\frac{4}{n-2}}\vert \frac{(\nabla_{a_{i}})_{_{1}}}{\lambda_{i}}\varphi_{i}\vert^{2}d\mu_{g_{0}}.
\end{split}
\end{equation*}
Finally passing to integration over $ \R^{n} $ up to some $ o(1) $ 
 there holds
\begin{equation*}
\begin{split}
\frac{n+2}{n-2} &
\underset{B_{\varepsilon}(a_{i})}{\int} x_{l}^{2}\varphi_{i}^{\frac{4}{n-2}}\vert \frac{(\nabla_{a_{i}})_{_{1}}}{\lambda_{i}}\varphi_{i}\vert^{2}d\mu_{g_{0}} 
= 
\int_{\R^{n}}x_{l}^{2} \frac{(\nabla_{a_{i}})_{_{1}}}{\lambda_{i}}\delta_{0, \lambda_{i}} ^{\frac{n+2}{n-2}} \frac{(\nabla_{a_{i}})_{_{1}}}{\lambda_{i}}\delta_{0, \lambda_{i}} dx \\
= &
- 2 \delta_{1, l}
\int_{\R^{n}}\frac{x_{1}}{\lambda_{i}} \delta_{0, \lambda_{i}} ^{\frac{n+2}{n-2}} \frac{(\nabla_{a_{i}})_{_{1}}}{\lambda_{i}}\delta_{0, \lambda_{i}} dx
-
\int_{\R^{n}}x_{l}^{2} \frac{(\nabla_{a_{i}})_{_{1}}}{\lambda_{i}}\delta_{0, \lambda_{i}} ^{\frac{n+2}{n-2}} (\frac{(\nabla_{a_{i}})_{_{1}}}{\lambda_{i}})^{2}\delta_{0, \lambda_{i}} dx, 
\end{split}
\end{equation*}
and similarly for $ j = 2, \dots, n $. Diagonalizing the Hessian we have 
$$ \nabla^{2}K_{i}x^{2}=\sum_{l=1}^{n}\partial_{l}^{2}K_{i}x_{l}^{2} $$ and 
\begin{equation*}
\begin{split}
\int_{\R^{n}}\frac{x_{1}}{\lambda_{i}} \delta_{0, \lambda_{i}} ^{\frac{n+2}{n-2}} \frac{(\nabla_{a_{i}})_{_{1}}}{\lambda_{i}}\delta_{0, \lambda_{i}} dx
= &
-(n-2)\int_{\R^{n}}\delta_{0, \lambda_{i}}^{\frac{2n}{n-2}}\frac{x_{1}^{2}}{1+\lambda_{i}^{2}r^{2}} dx \\
= &
-\frac{n-2}{n\lambda_{i}^{2}}
\int_{\R^{n}}\frac{r^{2}}{(1+r^{2})^{n+1}} dx, 
\end{split}
\end{equation*}
so we conclude that 
\begin{equation*}\begin{split}
\frac{(k_{\tau})_{\alpha^{m}\varphi_{m}+\bar v}^{\frac{2}{p+1}}}{8n(n-1)}
&
\partial^{2}J_{\tau}(\alpha^{m}\varphi_{m}+\bar v)
(\frac{(\nabla_{a_{i}})_{_{1}}}{\lambda_{i}}\varphi_{i})^{2}
= 
-
c\frac{\partial_{1}^{2}K_{i}}{K_{i}\lambda_{i}^{2}}.
\end{split}
\end{equation*}
Similarly one can show analogous formula for any couple of indices 
 $$ 
 \frac{(k_{\tau})_{\alpha^{m}\varphi_{m}+\bar v}^{\frac{2}{p+1}}}{8n(n-1)}
 \partial^{2}J_{\tau}(\alpha^{m}\varphi_{m}+\bar v)
 \frac{(\nabla_{a_{i}})_{_{k}}}{\lambda_{i}}\varphi_{i} \frac{(\nabla_{a_{i}})_{_{l}}}{\lambda_{i}}\varphi_{i} = 
 -
 c\frac{\partial_{k, l}^{2}K_{i}}{K_{i}\lambda_{i}^{2}}.
 $$ 
The proof is thereby complete. 
\end{proof}

\medskip

\noindent
From Proposition \ref{lem_refined_second_variation} we deduce that the kernel of $ \partial^2 J_\tau $ 
is potentially one-dimensional. On the other hand the presence of an at one dimensional kernel at a solutionis necessary
due to the scaling invariance of $ J_\tau $. Hence it is natural to impose somehomogeneous constraint. 

\begin{corollary}
\label{cor_restricted_second_variation}   
Let 
$ u \in \bar{V}(q, \varepsilon)$ 
be a solution of \eqref{eq:scin-tau} 
and
$$ 
I_\tau=J_\tau\lfloor_{[\Vert \cdot\Vert_{L_{g_{0}}}=1]}
\; \text{ or }\;
I_\tau=J_\tau\lfloor_{[\Vert \cdot\Vert_{k_{\tau}}=1]}.
 $$ 
Then, if $ \tilde{u} $ denoted the corresponding normalization of $u$, we have
\begin{equation*}
m(I_\tau, \tilde{u})
=
q-1+\sum_{i=1}^q (n-m(K, a_{i})).
\end{equation*}
\end{corollary}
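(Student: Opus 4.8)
The plan is to read off the Morse index directly from the nearly diagonal form of the Hessian furnished by Proposition~\ref{lem_refined_second_variation}, once the scaling invariance of $ J_\tau $ has been factored out through the sphere constraint.

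First I would record that, $ J_\tau $ being scaling invariant, every solution $ u \in \bar V(q, \varepsilon) $ of \eqref{eq:scin-tau} lies on a ray of critical points of $ J_\tau $; in particular its normalization $ \tilde u $ is a \emph{free} critical point, of the form $ \tilde u = \alpha^i \varphi_i + \bar v $ with $ \alpha^i \varphi_i \in \bar V(q, \varepsilon) $ by part (ii) of Lemma~\ref{l:v-bar}, so that Proposition~\ref{lem_refined_second_variation} applies at $ \tilde u $. Differentiating the identity $ J_\tau(t \tilde u) = J_\tau(\tilde u) $ twice and using $ \partial J_\tau(\tilde u) = 0 $ gives $ \partial^2 J_\tau(\tilde u)[\tilde u, \cdot] \equiv 0 $, so the scaling direction $ \tilde u $ lies in the kernel of $ \partial^2 J_\tau(\tilde u) $. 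In the decomposition $ W^{1, 2} = \mathcal{V} \oplus X_\alpha \oplus X_\lambda \oplus X_a $ this direction is, to leading order, $ \sum_i \alpha_i \varphi_i \in X_\alpha $ modulo the correction $ \bar v \in \mathcal{V} $ of size $ O(\frac{1}{\lambda^2}) $, and it coincides with the one-dimensional kernel of the block $ \mathbb{A}_{q-1, 0} $, which stems from the degree-zero homogeneous model $ f(\alpha) $.

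Next I would pass from $ J_\tau $ to the constrained functional $ I_\tau $. Since the off-diagonal coefficients in Proposition~\ref{lem_refined_second_variation} are $ o(\frac{1}{\lambda^2}) $ while every diagonal block has eigenvalues of order $ \frac{1}{\lambda^2} $ bounded away from zero, a continuity-of-eigenvalues argument shows that the strictly positive and strictly negative spectra of $ \partial^2 J_\tau(\tilde u) $ are unaffected by the off-diagonal perturbation; combined with the exact null vector found above, $ \partial^2 J_\tau(\tilde u) $ has a genuinely one-dimensional kernel, spanned by $ \tilde u $. As the quadratic form vanishes on this kernel direction, its restriction to any complementary hyperplane has the same index. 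Both constraint spheres $ \{ \Vert \cdot \Vert_{L_{g_0}} = 1 \} $ and $ \{ \Vert \cdot \Vert_{k_\tau} = 1 \} $ are transverse to the scaling ray, so their tangent spaces at $ \tilde u $ are such complements; moreover $ \partial J_\tau(\tilde u) = 0 $ forces the Lagrange multiplier to vanish, whence $ \partial^2 I_\tau(\tilde u) $ is exactly $ \partial^2 J_\tau(\tilde u) $ restricted there. Consequently $ m(I_\tau, \tilde u) $ equals the number of negative eigenvalues of $ \partial^2 J_\tau(\tilde u) $ on the full space, independently of the chosen normalization.

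Finally I would count block by block. The blocks $ \mathbb{V}_+ $ on $ \mathcal{V} $ and $ \mathbb{\Lambda}_+ $ on $ X_\lambda $ are positive-definite and contribute nothing; $ \mathbb{A}_{q-1, 0} $ contributes its $ q-1 $ negative eigenvalues, its kernel having been excised by the constraint; and $ -\frac{\nabla^2 K}{K} = -(\frac{\nabla^2 K_i}{K_i})_i $ contributes at each $ a_i $ as many negative eigenvalues as $ \nabla^2 K(a_i) $ has positive ones, namely $ n - m(K, a_i) $, since $ K_i > 0 $ and, as $ a_i \to x_i $ with $ \nabla^2 K(x_i) $ non-degenerate, the inertia is unambiguous for small $ \tau $. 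Summing yields $ m(I_\tau, \tilde u) = (q-1) + \sum_{i=1}^q (n - m(K, a_i)) $, as claimed. The main obstacle is precisely the second step: matching the \emph{exact} one-dimensional kernel of $ \partial^2 J_\tau(\tilde u) $ with the leading-order kernel of $ \mathbb{A}_{q-1, 0} $, and verifying through the $ o(\frac{1}{\lambda^2}) $ bound on the off-diagonal terms that the near-diagonal Hessian shares the inertia of its diagonal part and that the sphere constraint removes exactly this single direction.
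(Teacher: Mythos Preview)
Your proposal is correct and follows essentially the same approach as the paper: the paper's ``proof'' consists of the brief paragraph immediately preceding the corollary, observing that Proposition~\ref{lem_refined_second_variation} bounds the kernel dimension by one while scaling invariance forces it to be at least one, so imposing a homogeneous constraint removes exactly this direction. You have correctly filled in the remaining details---identifying the kernel direction with that of $\mathbb{A}_{q-1,0}$, noting that the Lagrange multiplier vanishes since $\tilde u$ is already a free critical point, and counting block by block---all of which the paper leaves implicit.
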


\section{Appendix: Some estimates and list of constants}\label{s:app}

In this appendix, recalling our notation, we collect some useful statements and formulae
proved in \cite{MM1}.

\begin{lemma}\label{lem_emergence_of_the_regular_part}
There holds 
 $ 
L_{g_{0}}\varphi_{a, \lambda}=
O
(
\varphi_{a, \lambda}^{\frac{n+2}{n-2}}
).
 $ 
More precisely on a geodesic ball $ B_{\alpha}( a ) $ for $ \alpha>0 $ small 
\begin{equation*}
\begin{split}
L_{g_{0}}\varphi_{a, \lambda}
= &
4n(n-1)\varphi_{a, \lambda}^{\frac{n+2}{n-2}}
-
2nc_{n}
r_{a}^{n-2}((n-1)H_{a}+r_{a}\partial_{r_{a}}H_{a}) \varphi_{a, \lambda}^{\frac{n+2}{n-2}} 
\\& +
\frac{R_{g_{a}}}{\lambda} u_{a}^{\frac{2}{n-2}} \varphi_{a, \lambda}^{\frac{n}{n-2}}
+
o(r_{a}^{n-2})\varphi_{a, \lambda}^{\frac{n+2}{n-2}}, \quad
r_{a}=d_{g_{a}}(a, \cdot).
\end{split}
\end{equation*}
Since in conformal normal coordinates $$ R_{g_{a}}=O(r_{a}^{2}),$$ 
cf. \cite{lp}, we obtain
\begin{enumerate}[label=(\roman*)]
 \item \quad for $ n=5 $ 
 $$ 
L_{g_{0}}\varphi_{a, \lambda}
= 
4n(n-1)
[1
-
\frac{c_{n}}{2}r_{a}^{n-2}(
H_{a}(a) + n \nabla H_{a}(a)x
)
]
\varphi_{a, \lambda}^{\frac{n+2}{n-2}} 
 +
O(\lambda^{-2}\varphi_{a, \lambda});
 $$ 
\item \quad for $ n=6 $ and with $ W(a) = |\mathbb{W}(a)|^2 $ 
 $$ L_{g_{0}}\varphi_{a, \lambda}=
4n(n-1)\varphi_{a, \lambda}^{\frac{n+2}{n-2}}
=
4n(n-1)[1+\frac{c_{n}}{2}W(a)\ln r]\varphi_{a, \lambda}^{\frac{n+2}{n-2}}
+
O
(\lambda^{-2}\varphi_{a, \lambda}); $$ 
\item \quad for $ n=7 $ 
 $$ L_{g_{0}}\varphi_{a, \lambda}=
4n(n-1)\varphi_{a, \lambda}^{\frac{n+2}{n-2}}
+
O
(\lambda^{-2}\varphi_{a, \lambda}). $$ 
\end{enumerate}
These expansions persist upon taking $ \lambda \partial \lambda $ and $ \frac{\nabla_{a}}{\lambda} $ derivatives.
\end{lemma}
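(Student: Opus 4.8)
The plan is to exploit the conformal covariance of the conformal Laplacian so as to carry out the computation in the background metric $g_a$, where conformal normal coordinates and the clean Green's function expansion \eqref{eq:exp-G} are available. Writing $\varphi_{a,\lambda}=u_a\,\psi_{a,\lambda}$ with $\psi_{a,\lambda}=\big(\lambda/(1+\lambda^2\gamma_n G_a^{\frac{2}{2-n}})\big)^{\frac{n-2}{2}}$ and applying the transformation law $L_g(\phi)=u^{-\frac{n+2}{n-2}}L_{g_0}(u\phi)$ with $u=u_a$, one gets
\[
L_{g_0}\varphi_{a,\lambda}=u_a^{\frac{n+2}{n-2}}\,L_{g_a}\psi_{a,\lambda},
\]
so it suffices to expand $L_{g_a}\psi_{a,\lambda}=-c_n\Delta_{g_a}\psi_{a,\lambda}+R_{g_a}\psi_{a,\lambda}$ and then multiply by $u_a^{\frac{n+2}{n-2}}$, which converts every $\psi_{a,\lambda}^{\beta}$ into $\varphi_{a,\lambda}^{\beta}$.

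First I would write $\psi_{a,\lambda}=F(G_a)$ as a function of the Green's function alone, and use the two defining features of $G_a$ in these coordinates: that $L_{g_a}G_a=0$ away from $a$, equivalently $\Delta_{g_a}G_a=\frac{R_{g_a}}{c_n}G_a$, and the eikonal identity $|\nabla_{g_a}r_a|_{g_a}=1$. The chain rule $\Delta_{g_a}F(G_a)=F'(G_a)\Delta_{g_a}G_a+F''(G_a)|\nabla_{g_a}G_a|^2$ then gives the separated identity
\[
L_{g_a}\psi_{a,\lambda}=R_{g_a}\big(F(G_a)-G_aF'(G_a)\big)-c_nF''(G_a)\,|\nabla_{g_a}G_a|^2,
\]
which isolates the scalar-curvature contribution in the first bracket and puts the main term in the second. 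Differentiating \eqref{eq:exp-G} and using the eikonal identity yields
\[
|\nabla_{g_a}G_a|^2=\tfrac{1}{(4n(n-1)\omega_n)^2}\big((n-2)^2 r_a^{2-2n}+2(2-n)r_a^{1-n}\partial_{r_a}H_a+|\nabla H_a|^2\big).
\]

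Inserting this into the previous display, the leading singular part $(n-2)^2 r_a^{2-2n}$ reproduces, after the routine algebra for $F'$ and $F''$, the flat bubble identity and hence the main term $4n(n-1)\varphi_{a,\lambda}^{\frac{n+2}{n-2}}$ (the same mechanism as for the Euclidean Talenti bubble, recalling $c_n\,n(n-2)=4n(n-1)$). The subleading pieces carrying $H_a$ and $\partial_{r_a}H_a$, together with the $H_a$-dependence of $t=\gamma_n G_a^{\frac{2}{2-n}}=r_a^2(1+H_a r_a^{n-2})^{\frac{2}{2-n}}$ inside $F''$, assemble into the announced correction $-2nc_n r_a^{n-2}\big((n-1)H_a+r_a\partial_{r_a}H_a\big)\varphi_{a,\lambda}^{\frac{n+2}{n-2}}$, while the bracket $R_{g_a}(F-G_aF')$ produces $\frac{R_{g_a}}{\lambda}u_a^{\frac{2}{n-2}}\varphi_{a,\lambda}^{\frac{n}{n-2}}$. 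Since $R_{g_a}=O(r_a^2)$ in conformal normal coordinates, the latter is $O(\lambda^{-2}\varphi_{a,\lambda})$; the shape of the $H_a$-correction is then dictated by the singular part $H_{s,a}$, which is $O(r_a)$ for $n=5$ (giving the explicit $H_a(a)+n\nabla H_a(a)x$ profile), $O(\ln r_a)$ with leading coefficient the Weyl tensor for $n=6$, and $O(r_a^{6-n})$ for $n\geq 7$, so that $r_a^{n-2}H_{s,a}=o(r_a^{n-2})$ is absorbed.

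The persistence of the expansions under $\lambda\partial_\lambda$ and $\lambda^{-1}\nabla_a$ follows because these operators commute with $L_{g_0}$, which does not depend on $(\lambda,a)$, and act on the explicit profile $F(G_a)$; one simply differentiates the identity above, noting that each such derivative maps the bubble hierarchy into itself and preserves the orders. The main obstacle I expect is the gradient expansion step: obtaining $|\nabla_{g_a}G_a|^2$ and the $t$-expansion to precisely order $r_a^{n-2}$ relative to the leading singularity, and verifying that all remainders are genuinely $o(r_a^{n-2})\varphi_{a,\lambda}^{\frac{n+2}{n-2}}$ \emph{uniformly} as $\lambda r_a$ ranges over the entire concentration regime, from the peak to the tail. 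This is where the dimension-dependent singular term $H_{s,a}$ and the higher-order behavior of the conformal normal-coordinate metric must be tracked with care.
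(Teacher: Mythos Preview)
The paper does not provide its own proof of this lemma: it is stated in the Appendix as one of the ``useful statements and formulae proved in \cite{MM1}'', so there is no in-paper argument to compare against line by line. That said, your approach is exactly the standard route and is the one used in \cite{MM1}: pass to the conformal metric $g_a$ via the covariance law $L_{g_0}(u_a\psi)=u_a^{\frac{n+2}{n-2}}L_{g_a}\psi$, write $\psi_{a,\lambda}=F(G_a)$, and exploit $L_{g_a}G_a=0$ to reduce $L_{g_a}\psi_{a,\lambda}$ to the two pieces $R_{g_a}(F-G_aF')$ and $-c_nF''|\nabla_{g_a}G_a|^2$. The identification of the main term $4n(n-1)\varphi^{\frac{n+2}{n-2}}$ from the leading singularity $(n-2)^2r_a^{2-2n}$ in $|\nabla G_a|^2$, and of the $H_a$-correction from the cross term, is correct in outline.

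Two minor cautions. First, for the persistence under $\frac{1}{\lambda}\nabla_a$, commutation with $L_{g_0}$ is fine, but on the right-hand side the objects $u_a$, $g_a$, $G_a$, $H_a$ and the conformal normal coordinate chart all depend on $a$; you should say explicitly that the $a$-derivatives of these quantities are smooth and bounded (this is the content of the smooth dependence of conformal normal coordinates on the base point, cf.\ \cite{lp}), so that differentiating the expansion termwise is legitimate and the remainder stays $o(r_a^{n-2})\varphi^{\frac{n+2}{n-2}}$. Second, your claimed uniformity ``from peak to tail'' is exactly the point: near $a$ one has $\varphi_{a,\lambda}^{\frac{n}{n-2}}\simeq\lambda\,\varphi_{a,\lambda}^{\frac{n+2}{n-2}}/(1+\lambda^2r_a^2)$, so $\frac{R_{g_a}}{\lambda}\varphi^{\frac{n}{n-2}}=O(r_a^2/(1+\lambda^2r_a^2))\varphi^{\frac{n+2}{n-2}}=O(\lambda^{-2})\varphi$, as you state; it is worth writing this inequality once rather than leaving it implicit.
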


\medskip

\begin{lemma}\label{lem_interactions}   
Let $ \theta=\frac{n-2}{2}\tau $ and $ k, l=1, 2, 3 $ and $ i, j = 1, \ldots, q $. 
There holds uniformly as $ 0\leq \tau\longrightarrow 0 $ 
\begin{enumerate}[label=(\roman*)]
 \item \quad
 $ \vert \phi_{k, i}\vert, 
\vert \lambda_{i}\partial_{\lambda_{i}}\phi_{k, i}\vert, 
\vert \frac{1}{\lambda_{i}}\nabla_{a_{i}} \phi_{k, i}\vert
\leq 
C \varphi_{i}; $ 
 \item \quad
 $ 
\lambda_{i}^{\theta}\int \varphi_{i}^{\frac{4}{n-2}-\tau} \phi_{k, i}\phi_{k, i}d\mu_{g_{0}}
=
c_{k}\cdot id
+
O(\tau +\frac{1}{\lambda_{i}^{2+\theta}}), \;c_{k}>0; $ 
\item \quad 
for $ i\neq j $ up to some error of order 
 $ O(\tau^{2}+\sum_{i\neq j}(\frac{1}{\lambda_{i}^{4}}+\varepsilon_{i, j}^{\frac{n+2}{n}})) $ 
\begin{equation*}
\lambda_{i}^{\theta}\int \varphi_{i}^{\frac{n+2}{n-2}-\tau}\phi_{k, j}d\mu_{g_{0}}
= 
b_{k}d_{k, i}\varepsilon_{i, j}
=
\int \varphi_{i}^{1-\tau}d_{k, j}\varphi_{j}^{\frac{n+2}{n-2}}d\mu_{g_{0}} ;\end{equation*}
 \item \quad 
 $ 
\lambda_{i}^{\theta}\int \varphi_{i}^{\frac{4}{n-2}-\tau} \phi_{k, i}\phi_{l, i}d\mu_{g_{0}}
= 
O(\frac{1}{\lambda_{i}^{2}}) $ 
for $ k\neq l $ and for $ k=2, 3 $ 
 $$ \textstyle\quad 
\lambda_{i}^{\theta}\int \varphi_{i}^{\frac{n+2}{n-2}-\tau} \phi_{k, i}d\mu_{g_{0}}
=
O\left(
\tau
+
\begin{pmatrix}
\lambda_{i}^{2-n} & \text{for } n = 5 \\
\frac{\ln \lambda_{i}}{\lambda_{i}^{4}} & \text{for } n=6 \\
\lambda_{i}^{4} &\text{for } n\geq 7
\end{pmatrix}
\right);
 $$ 
 \item \quad
for $ i\neq j, \;\alpha +\beta=\frac{2n}{n-2}, \; \alpha-\tau>\frac{n}{n-2}>\beta\geq 1 $ 
 $$ 
\lambda_{i}^{\theta}\int \varphi_{i}^{\alpha-\tau }\varphi_{j}^{\beta} d\mu_{g_{0}}
=
O(\varepsilon_{i, j}^{\beta})
; $$ 
\item \quad
 $ 
\int \varphi_{i}^{\frac{n}{n-2}}\varphi_{j}^{\frac{n}{n-2}} d\mu_{g_{0}}
=
O(\varepsilon^{\frac{n}{n-2}}_{i, j}\ln \varepsilon_{i, j}), \, i\neq j;
 $ 
 \item \quad 
 $ 
(1, \lambda_{i}\partial_{\lambda_{i}}, \frac{1}{\lambda_{i}}\nabla_{a_{i}})\varepsilon_{i, j}=O(\varepsilon_{i, j})
, \, i\neq j $, 
\end{enumerate}
 cf. \eqref{eq:eijm}, 
with constants 
\begin{itemize}
\item \quad
$
b_{k}=\underset{\R^{n}}{\int}\frac{dx}{(1+r^{2})^{\frac{n+2}{2}}}
$
for $k=1,2,3$;
\item \quad
$
c_{1}=\underset{\R^{n}}{\int}\frac{dx}{(1+r^{2})^{n}};
$
\item \quad
$
c_{2}=\frac{(n-2)^{2}}{4}\underset{\R^{n}}{\int}\frac{\vert r^{2}-1\vert^{2}dx}{(1+r^{2})^{n+2}};
$
\item \quad
$
c_{3}=\frac{(n-2)^{2}}{n}\underset{\R^{n}}{\int}\frac{r^{2}dx}{(1+r^{2})^{n+2}}.
$
\end{itemize}
\end{lemma}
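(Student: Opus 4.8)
The plan is to reduce every integral to an explicit Euclidean computation against the standard bubble, exploiting that in the conformal normal coordinates of \eqref{eq:exp-G} the profile in \eqref{eq:bubbles} becomes, after the rescaling $y=\lambda_i\exp_{a_i}^{-1}(x)$, the Aubin--Talenti bubble $\delta_{0,1}(y)=(1+|y|^2)^{-(n-2)/2}$ up to the multiplicative constant $u_{a_i}(a_i)$ and curvature corrections controlled by Lemma \ref{lem_emergence_of_the_regular_part}. The weight $\lambda_i^{\theta}$ with $\theta=\frac{n-2}{2}\tau$ is exactly what cancels the subcritical factor $\varphi_i^{-\tau}\simeq\lambda_i^{-\theta}$ at the concentration scale, so the limiting integrals are $\tau$-independent; writing $\varphi_i^{-\tau}=1-\tau\ln\varphi_i+O(\tau^2)$ produces the $O(\tau)$ errors, while the $O(\lambda_i^{-2})$-type errors come from the tail outside $B_\varepsilon(a_i)$ and from the regular part $H_{a_i}$ of \eqref{eq:exp-G}.

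First I would prove (i) directly from \eqref{eq:bubbles}: each application of $\lambda\partial_\lambda$ or $\frac{1}{\lambda}\nabla_a$ to the profile $(\lambda/(1+\lambda^2 s))^{(n-2)/2}$, with $s=\gamma_n G_a^{2/(2-n)}$, multiplies it by a factor bounded uniformly by virtue of $\lambda^2 s/(1+\lambda^2 s)\le 1$, and the bound persists under a second derivative. For (ii) and the diagonal part of (iv), the rescaling turns $\lambda_i^\theta\int\varphi_i^{4/(n-2)-\tau}\phi_{k,i}\phi_{l,i}$ into $\int_{\R^n}\delta_{0,1}^{4/(n-2)}(d_k\delta_{0,1})(d_l\delta_{0,1})\,dy$ plus controlled errors; these Euclidean integrals equal $c_k\cdot\mathrm{id}$ for $k=l$ (with the explicit, manifestly positive $c_k$) and vanish for $k\ne l$ either by parity of the translation generator $d_3$ or by the scale-invariance of the total mass $\int\delta^{2n/(n-2)}$, which kills the $d_1$--$d_2$ pair, leaving the stated $O(\lambda_i^{-2})$. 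The single-bubble integrals in (iv) vanish to leading order by the same mass/parity argument, and their $n$-dependent remainder reflects the order of $H_{a_i}$ in \eqref{eq:exp-G}.

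For the cross terms I use that $d(a_i,a_j)\simeq 1$ forces $\varepsilon_{i,j}\simeq(\lambda_i\lambda_j)^{(2-n)/2}\to0$. The pure-power estimates (v) and (vi) are the classical two-bubble bounds: splitting $M$ into $\{\varphi_i\ge\varphi_j\}$ and its complement and bounding each $\varphi$ by its concentrated profile, the hypothesis $\alpha-\tau>n/(n-2)>\beta\ge1$ makes the integral near $a_i$ convergent and dominated by the overlap region, giving $O(\varepsilon_{i,j}^\beta)$, whereas the borderline exponents $\alpha=\beta=n/(n-2)$ of (vi) produce the extra $\ln\varepsilon_{i,j}$ from the marginally divergent overlap. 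Item (vii) is immediate: setting $E=\varepsilon_{i,j}^{2/(2-n)}$, each operator $\lambda_i\partial_{\lambda_i}$ and $\frac{1}{\lambda_i}\nabla_{a_i}$ sends $E$ to a quantity of size $O(E)$, since the dominant summand $\lambda_i\lambda_j\gamma_n G^{2/(2-n)}$ is homogeneous in the scales and smooth in $a_i$ off the diagonal, so raising to the power $(2-n)/2$ gives $O(\varepsilon_{i,j})$.

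The main obstacle is the double equality in (iii), which expresses a duality between differentiating the $j$-th bubble and the interaction quantity $\varepsilon_{i,j}$. I would first establish the $k=1$ case $\lambda_i^\theta\int\varphi_i^{(n+2)/(n-2)-\tau}\varphi_j=b_1\varepsilon_{i,j}+O(\cdots)$ by the overlap analysis above, and then obtain $k=2,3$ by differentiating this identity under the integral sign in the parameters of bubble $j$. The crucial point is the approximate symmetry $d_{k,i}\varepsilon_{i,j}=d_{k,j}\varepsilon_{i,j}+(\text{lower order})$, valid because in the regime $\lambda_i\simeq\lambda_j$ the symmetric term $\lambda_i\lambda_j\gamma_n G^{2/(2-n)}$ dominates $E$; combined with an integration by parts transferring $d_{k,j}$ from $\varphi_j$ onto $\varphi_i^{1-\tau}$ this also yields the second equality. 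Tracking the two sources of error, namely $O(\tau^2)$ from the subcritical expansion and $O(\varepsilon_{i,j}^{(n+2)/n})$ from the next-order overlap contribution, together with the $O(\lambda_i^{-4})$ self-interaction correction, reproduces exactly the remainder $O(\tau^2+\sum_{i\ne j}(\lambda_i^{-4}+\varepsilon_{i,j}^{(n+2)/n}))$ asserted; the constants $b_k,c_1,c_2,c_3$ are then read off as the Euclidean integrals listed at the end of the statement.
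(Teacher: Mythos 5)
The first thing to note is that this paper contains no proof of Lemma \ref{lem_interactions} at all: the appendix explicitly states that it collects statements ``proved in \cite{MM1}'', so your proposal can only be measured against the standard bubble-interaction computations that \cite{MM1} carries out. Your overall strategy is indeed that standard route, and most items are fine as sketched: the pointwise bounds (i) from the explicit profile \eqref{eq:bubbles}; the rescaled Euclidean integrals for (ii) and the diagonal part of (iv), with the $k\neq l$ and single-bubble cancellations via parity and the scale invariance of $\int\delta^{2n/(n-2)}$, the $n$-dependent remainders coming from $H_a$ in \eqref{eq:exp-G}; the Bahri-type splitting into $\{\varphi_i\geq\varphi_j\}$ and its complement for (v)--(vi); and the direct estimate of $E=\varepsilon_{i,j}^{2/(2-n)}$ for (vii), which is correct as you argue.

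The genuine gap is in (iii), the item you yourself single out as the main obstacle, and it is twofold. First, you propose to obtain $k=2,3$ ``by differentiating this identity under the integral sign'': differentiation under the integral sign produces exactly the quantity $\int\varphi_i^{p-\tau}\phi_{k,j}\,d\mu_{g_0}$ to be estimated, but differentiating the asymptotic identity $\lambda_i^\theta\int\varphi_i^{p-\tau}\varphi_j\,d\mu_{g_0}=b_1\varepsilon_{i,j}+O(\cdot)$ does not yield an expansion of that derivative, since nothing guarantees that the error $O(\tau^2+\sum\lambda^{-4}+\varepsilon_{i,j}^{(n+2)/n})$ is differentiable in $(\lambda_j,a_j)$ with bounds of the same order; each $k$ must be expanded directly (localizing where $\varphi_i^{p}$ concentrates, or converting the parameter derivative into a spatial one and integrating by parts, as you suggest only for the second equality). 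Second, the ``crucial'' approximate symmetry $d_{k,i}\varepsilon_{i,j}=d_{k,j}\varepsilon_{i,j}+\text{lower order}$ is false for $k=3$: in the dominant regime $\varepsilon_{i,j}\approx(\lambda_i\lambda_j\gamma_nG^{2/(2-n)}(a_i,a_j))^{(2-n)/2}$ the flat model gives $\nabla_{a_i}\vert a_i-a_j\vert^2=-\nabla_{a_j}\vert a_i-a_j\vert^2$, so $\frac{1}{\lambda_i}\nabla_{a_i}\varepsilon_{i,j}\approx-\frac{\lambda_j}{\lambda_i}\,\frac{1}{\lambda_j}\nabla_{a_j}\varepsilon_{i,j}$: the two derivatives are approximately opposite vectors (and live in different tangent spaces), not equal ones. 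Only the dilation case $k=2$ enjoys the symmetry you invoke, both derivatives being $\approx\frac{2-n}{2}\varepsilon_{i,j}$ as in \eqref{lambda_interaction_derivative_lower_bound}, which is perhaps what misled you; as written, your argument would produce the wrong sign for the translational case, and the $i$- versus $j$-index bookkeeping in the displayed identity can only come out of the explicit overlap computation, not the symmetry heuristic.
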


\begin{lemma}\label{lem_testing_with_v}
For $u\in V(q, \varepsilon)$ with $k_{\tau}=1$ and 
$\nu\in H_{u}(q, \varepsilon)$ there holds
\begin{equation*}
\partial J_{\tau}(\alpha^{i}\varphi_{i})\nu
= 
O\bigg(
\bigg[
\sum_{r}\frac{\tau}{\lambda_{r}^{\theta}}
+
\sum_{r}\frac{\vert \nabla K_{r}\vert}{\lambda_{r}^{1+\theta}}
+
\sum_{r}\frac{1}{\lambda_{r}^{2+\theta}}
+
\sum_{r\neq s}\frac{\varepsilon_{r, s}^{\frac{n+2}{2n}}}{\lambda_{r}^{\theta}}
\bigg]
\Vert \nu \Vert \bigg).
\end{equation*}
\end{lemma}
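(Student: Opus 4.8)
The plan is to start from the first variation \eqref{first_variation_evaluating}, which at $ u=\alpha^{i}\varphi_{i} $ reads
\begin{equation*}
\partial J_{\tau}(\alpha^{i}\varphi_{i})\nu
=
\frac{2}{(k_{\tau})^{\frac{2}{p+1}}}
\Big[\int L_{g_{0}}(\alpha^{i}\varphi_{i})\,\nu\,d\mu_{g_{0}}
-\frac{r}{k_{\tau}}\int K(\alpha^{i}\varphi_{i})^{p}\nu\,d\mu_{g_{0}}\Big],
\end{equation*}
where $ r=r_{\alpha^{i}\varphi_{i}} $ and $ (k_{\tau})_{\alpha^{i}\varphi_{i}} $ are uniformly bounded above and away from zero. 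The first term vanishes exactly: since $ L_{g_{0}} $ is self-adjoint, $ \int L_{g_{0}}(\alpha^{i}\varphi_{i})\nu=\sum_{i}\alpha_{i}\langle\varphi_{i},\nu\rangle_{L_{g_{0}}}=0 $ because $ \nu\in H_{\alpha^{i}\varphi_{i}} $ is $ L_{g_{0}} $-orthogonal to each $ \varphi_{i} $, cf. \eqref{eq:Hu}. Thus the whole estimate is reduced to bounding $ \int K(\alpha^{i}\varphi_{i})^{p}\nu\,d\mu_{g_{0}} $, and it is here that the orthogonality of $ \nu $ must be exploited a second time.

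Next I would localise around the concentration points. Writing $ (\alpha^{i}\varphi_{i})^{p}=\sum_{i}\alpha_{i}^{p}\varphi_{i}^{p}+(\text{cross terms}) $, the cross terms are of the form $ \varphi_{i}^{p-1}\varphi_{j} $ with $ i\neq j $; testing them against $ K\nu $ and using H\"older's inequality together with the Sobolev embedding $ \|\nu\|_{L^{\frac{2n}{n-2}}}\lesssim\|\nu\| $ and the interaction estimates of Lemma~\ref{lem_interactions}~(v), they contribute the term $ \sum_{r\neq s}\varepsilon_{r,s}^{\frac{n+2}{2n}}\lambda_{r}^{-\theta}\|\nu\| $, cf. \eqref{eq:eijm}. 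For the diagonal pieces $ \int K\varphi_{i}^{p}\nu $ I would Taylor-expand $ K $ in conformal normal coordinates centred at $ a_{i} $, $ K(x)=K_{i}+\nabla K_{i}\cdot x+\tfrac12\nabla^{2}K_{i}[x,x]+O(|x|^{3}) $, and estimate the resulting integrals separately. The crucial normalisation is that $ \varphi_{i}^{p}=\varphi_{i}^{\frac{n+2}{n-2}}\varphi_{i}^{-\tau} $ carries a global factor $ \lambda_{i}^{-\theta} $ (as $ \varphi_{i}\simeq\lambda_{i}^{\frac{n-2}{2}} $ at its peak), which produces the $ \lambda_{r}^{-\theta} $ appearing in every term; the constraint $ \lambda_{i}^{\tau}<1+\varepsilon $ from the definition of $ A_{u}(q,\varepsilon) $ keeps this factor comparable to a constant.

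The linear and quadratic terms are then routine: a direct H\"older estimate of $ \||x|\varphi_{i}^{p}\|_{L^{\frac{2n}{n+2}}}\simeq\lambda_{i}^{-1-\theta} $ and $ \||x|^{2}\varphi_{i}^{p}\|_{L^{\frac{2n}{n+2}}}\simeq\lambda_{i}^{-2-\theta} $ gives precisely $ \tfrac{|\nabla K_{r}|}{\lambda_{r}^{1+\theta}}\|\nu\| $ and $ \tfrac{1}{\lambda_{r}^{2+\theta}}\|\nu\| $, while the cubic remainder is of lower order. The main work, and the expected obstacle, is the constant term $ K_{i}\int\varphi_{i}^{p}\nu $, which without further input is only $ O(\|\nu\|) $. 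Here I would use the orthogonality once more: splitting $ \int\varphi_{i}^{p}\nu=\int\varphi_{i}^{\frac{n+2}{n-2}}\nu+\int\varphi_{i}^{\frac{n+2}{n-2}}(\varphi_{i}^{-\tau}-1)\nu $, the first integral is controlled by writing $ 4n(n-1)\varphi_{i}^{\frac{n+2}{n-2}}=L_{g_{0}}\varphi_{i}-E_{i} $ with $ E_{i} $ the error of Lemma~\ref{lem_emergence_of_the_regular_part}; since $ \int L_{g_{0}}\varphi_{i}\,\nu=0 $, only $ \int E_{i}\nu $ survives, and the regular-part and $ O(\lambda_{i}^{-2}\varphi_{i}) $ contributions are bounded by $ \lambda_{i}^{-(n-2)}\|\nu\| $ and $ \lambda_{i}^{-4}\|\nu\| $, both absorbed into $ \tfrac{1}{\lambda_{r}^{2+\theta}}\|\nu\| $ for $ n\geq 5 $. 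The second integral is the genuine subcritical effect: using $ \varphi_{i}^{-\tau}-1=-\tau\ln\varphi_{i}+O(\tau^{2}(\ln\varphi_{i})^{2}) $ and a H\"older estimate with the $ \lambda_{i}^{-\theta} $ factor extracted, it is of size $ \tfrac{\tau}{\lambda_{r}^{\theta}}\|\nu\| $, modulo logarithmic factors tamed by $ \lambda_{i}^{\tau}<1+\varepsilon $. Summing over $ i $ and reinstating the $ O(1) $ prefactors yields the four claimed terms. The delicate point throughout is to verify that the regular-part errors in Lemma~\ref{lem_emergence_of_the_regular_part} are genuinely subleading and to track the $ \lambda^{-\theta} $ bookkeeping consistently, so that no spurious $ O(\|\nu\|) $ term remains.
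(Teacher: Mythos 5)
Your overall architecture is sound and is surely the intended one (note that this paper does not prove the lemma itself: it is quoted in the Appendix from \cite{MM1}): the term $\int L_{g_0}(\alpha^{i}\varphi_{i})\nu\, d\mu_{g_0}$ vanishes exactly by the $L_{g_0}$-orthogonality in \eqref{eq:Hu}; the cross terms $\varphi_i^{p-1}\varphi_j$ give the $\varepsilon_{r,s}^{\frac{n+2}{2n}}$ contribution; the Taylor expansion of $K$ produces the $\frac{|\nabla K_r|}{\lambda_r^{1+\theta}}$ and $\frac{1}{\lambda_r^{2+\theta}}$ terms; and the constant term $K_i\int\varphi_i^{\frac{n+2}{n-2}}\nu$ is killed by combining Lemma \ref{lem_emergence_of_the_regular_part} with orthogonality — this is precisely the mechanism the paper records as \eqref{non_linear_v_part_interaction}, whose right-hand sides $\lambda^{-3}$, $\ln^{2/3}\lambda\,\lambda^{-10/3}$, $\lambda^{-4}$ are indeed absorbed into $\lambda_r^{-2-\theta}$ for $n\geq 5$. (A minor caveat: Lemma \ref{lem_interactions}(v) as stated requires $\alpha-\tau>\frac{n}{n-2}>\beta\geq 1$, which the exponents of $(\varphi_i^{p-1}\varphi_j)^{\frac{2n}{n+2}}$ violate for $n\geq 6$; one should instead invoke the classical Bahri-type bound $\Vert\varphi_i^{p-1}\varphi_j\Vert_{L^{\frac{2n}{n+2}}}=O(\varepsilon_{i,j}^{\frac{n+2}{2n}})$, possibly with logarithms that are still dominated.)

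There is, however, one genuine gap: your treatment of the subcritical term $\int\varphi_i^{\frac{n+2}{n-2}}(\varphi_i^{-\tau}-1)\nu\,d\mu_{g_0}$. Writing $\varphi_i^{-\tau}-1=-\tau\ln\varphi_i+O(\tau^2\ln^2\varphi_i)$ and noting
\begin{equation*}
-\tau\ln\varphi_i
=
-\theta\ln\lambda_i
+
\theta\ln\left(1+\lambda_i^{2}r^{2}\right)
-
\tau\ln u_{a_i}
+
o(\tau),
\end{equation*}
the spatially constant piece $\theta\ln\lambda_i$ is the problem: a plain H\"older estimate, with the logarithm ``tamed'' by $\lambda_i^{\tau}<1+\varepsilon$ as you propose, only yields $O(\tau\ln\lambda_i)\Vert\nu\Vert=O(\varepsilon)\Vert\nu\Vert$, and since $\tau\ln\lambda_i/\tau=\ln\lambda_i\longrightarrow\infty$ this does \emph{not} imply the claimed bound $O(\frac{\tau}{\lambda_r^{\theta}})\Vert\nu\Vert$, whose first term is comparable to $\tau$ itself. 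The missing idea is to use the orthogonality one more time: the constant $\theta\ln\lambda_i$ multiplies exactly $\int\varphi_i^{\frac{n+2}{n-2}}\nu\,d\mu_{g_0}$, which by \eqref{non_linear_v_part_interaction} is $O(\lambda_i^{-3})\Vert\nu\Vert$ (resp.\ the $n=6$, $n\geq 7$ analogues), so this contribution is $O(\tau\ln\lambda_i\cdot\lambda_i^{-3})\Vert\nu\Vert$ and negligible; only the genuinely varying piece survives, and since $\Vert\varphi_i^{\frac{n+2}{n-2}}\ln(1+\lambda_i^{2}r^{2})\Vert_{L^{\frac{2n}{n+2}}}$ is scale-invariantly $O(1)$, it contributes $O(\theta)\Vert\nu\Vert=O(\tau)\Vert\nu\Vert$, which is the stated $\frac{\tau}{\lambda_r^{\theta}}$-term ($\tau\ln u_{a_i}$ is harmless as $u_{a_i}$ is bounded, and the $O(\tau^2\ln^{2}\varphi_i)$ remainder is handled by the same splitting). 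Without this cancellation your bookkeeping leaves a spurious factor $\ln\lambda$ in the $\tau$-term, so the estimate as you justify it is strictly weaker than the lemma.
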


\begin{lemma}
 \label{lem_alpha_derivatives_at_infinity}
 For $ u\in V(q, \varepsilon) $ and $ \varepsilon>0 $ sufficiently small the three quantities 
 $ \partial J_{\tau} (u)\phi_{1, j} $, 
 $ \partial J_{\tau}(\alpha^{i}\varphi_{i})\phi_{1, j} $, 
 $ \partial_{\alpha_{j}}J_{\tau}(\alpha^{i}\varphi_{i}) $ can be written as 
 \begin{equation*}
 \begin{split}
 \frac{\alpha_{j}}
 {(\alpha_{K, \tau}^{\frac{2n}{n-2}})^{\frac{n-2}{n}}}
 \bigg( 
 &
 \grave c_{0}\big(
 1
 -
 \frac{\alpha^{2}}{\alpha_{K, \tau}^{p+1}}
 \frac{K_{j}}{\lambda_{j}^{\theta}}\alpha_{j}^{p-1}
\big)
 -
 \grave c_{2}
 \big(
 \frac{\Delta K_{j}}{K_{j}\lambda_{j}^{2}}
 -
 \sum_{k}\frac{\Delta K_{k}}{K_{k}\lambda_{k}^{2}}
 \frac{\alpha_{k}^{2}}{\alpha^{2}}
 \big)
 \\ & 
 +
 \grave b_{1} \bigg(
 \sum_{k\neq l}
 \frac{\alpha_{k}\alpha_{l}}{\alpha^{2}}
 \varepsilon_{k, l}
 -
 \sum_{j\neq i}\frac{\alpha_{i}}{\alpha_{j}}\varepsilon_{i, j}
 \bigg)
 \\ & \quad \quad\quad\quad\quad \;\, 
 -
 \grave d_{1}
 \begin{pmatrix}
 \frac{H_{j}}{\lambda_{j}^{3}} -\sum_{k}\frac{\alpha_{k}^{2}}{\alpha^{2}}\frac{H_{k}}{\lambda_{k}^{3 }} &\text{for } n=5\\
 \frac{W_{j}\ln \lambda_{j}}{\lambda_{i}^{4}}-\sum_{k}\frac{\alpha_{k}^{2}}{\alpha^{2}}\frac{W_{k}\ln \lambda_{k}}{\lambda_{k}^{4}} &\text{for } n=6\\
 0 &\text{for }n\geq 7
 \end{pmatrix} 
 \bigg)
 \end{split}
 \end{equation*}
up to an error of order 
 $$ 
O
\big(
\tau^{2}
+
\sum_{r\neq s} 
\frac{\vert \nabla K_{r}\vert^{2}}{\lambda_{r}^{2}}
+
\frac{1}{\lambda_{r}^{4}}
+
\varepsilon_{r, s}^{\frac{n+2}{n}}
+
\vert \partial J_{\tau}(u)\vert^{2}
\big), 
 $$ 
with positive constants 
\begin{equation}\label{constants_alpha_derivative}
\begin{split}
& \bullet \quad  
 \grave b_{1}
 =
 \frac{8n(n-1)(n+2)}{\bar c_{0}^{\frac{n-2}{n}}(n-2)}b_{1}; \\
& \bullet \quad
 \grave c_{2}
 =
 \frac{{8n}(n-1)}{\bar c_{0}^{\frac{n-2}{n}}}
 \bar c_{2};\\
& \bullet \quad
 \grave d_{1}
 =
 \frac{{8n}(n-1)}{\bar c_{0}^{\frac{n-2}{n}}}
 \bar d_{1};\\
& \bullet \quad
 \grave c_{0}= 8n(n-1)\bar c_{0}^{\frac{2}{n}}.
\end{split}
 \end{equation}
 In particular for all $ j $ 
 \begin{equation*}
 \frac{\alpha^{2}}{\alpha_{K, \tau}^{p+1}}\frac{K_{j}}{\lambda_{j}^{\theta}}\alpha_{j}^{p-1}
 =
 1
 +
 O 
 \big(
 \tau
 +
 \sum_{r\neq s} 
 \frac{1}{\lambda_{r}^{2}}
 +
 \varepsilon_{r, s}
 +
 \vert \partial J_{\tau}(u)\vert
 \big).
\end{equation*}
\end{lemma}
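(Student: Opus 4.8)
The plan is first to observe that the three quantities coincide up to the claimed error, so that only one expansion has to be carried out. Since $\phi_{1,j}=\varphi_j$ (recall $d_{1,i}=1$) and $\partial_{\alpha_j}(\alpha^i\varphi_i)=\varphi_j$, the chain rule gives $\partial_{\alpha_j}J_\tau(\alpha^i\varphi_i)=\partial J_\tau(\alpha^i\varphi_i)\varphi_j=\partial J_\tau(\alpha^i\varphi_i)\phi_{1,j}$, so the last two agree identically. For the first one I would write $u=\alpha^i\varphi_i+v$ with $v=u-\alpha^i\varphi_i$, and Taylor expand
\begin{equation*}
\partial J_\tau(u)\phi_{1,j}=\partial J_\tau(\alpha^i\varphi_i)\phi_{1,j}+\int_0^1\partial^2J_\tau(\alpha^i\varphi_i+tv)\,v\,\phi_{1,j}\,dt .
\end{equation*}
Using \eqref{second_variation_evaluating}, the orthogonality of $v$, and the interaction estimates, the integral term is absorbed into the error; the $\vert\partial J_\tau(u)\vert^2$ contribution in the remainder originates precisely from this cross term together with the reduced equation governing $v$. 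Hence it suffices to expand $\partial J_\tau(\alpha^i\varphi_i)\varphi_j$.

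\textbf{The main expansion.} From \eqref{first_variation_evaluating},
\begin{equation*}
\frac{k_\tau^{\frac{2}{p+1}}}{2}\,\partial J_\tau(\alpha^i\varphi_i)\varphi_j=\int L_{g_0}(\alpha^i\varphi_i)\varphi_j\,d\mu_{g_0}-\frac{r}{k_\tau}\int K(\alpha^i\varphi_i)^p\varphi_j\,d\mu_{g_0},
\end{equation*}
and I would treat the two integrals separately. For the linear term I split into the diagonal part $\alpha_j\int L_{g_0}\varphi_j\,\varphi_j$ and the off-diagonal parts $\sum_{i\neq j}\alpha_i\int L_{g_0}\varphi_i\,\varphi_j$: Lemma \ref{lem_emergence_of_the_regular_part} replaces $L_{g_0}\varphi$ by $4n(n-1)\varphi^{\frac{n+2}{n-2}}$ plus the dimension–dependent corrections involving $H_a$ (for $n=5$) and the Weyl term (for $n=6$), which produce the $\grave d_1$-terms, while Lemma \ref{lem_interactions}(iii) turns the off-diagonal integrals into the interaction quantities $\varepsilon_{i,j}$, giving the $\grave b_1$-terms. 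For the nonlinear term I would localize near each $a_i$: on $B_\varepsilon(a_j)$ the bubble $\varphi_j$ dominates, so $K(\alpha^i\varphi_i)^p\varphi_j=\alpha_j^p K\varphi_j^{p+1}+\dots$; expanding $K$ about $a_j$ as $K_j+\tfrac12\nabla^2K_j\,x^2+\dots$ (the gradient term being of lower order since the centers sit near critical points) and integrating against $\varphi_j^{p+1}$ weighted by $r^2$ yields the $\grave c_2\,\Delta K_j/\lambda_j^2$-term, with the $\lambda_j^{-\theta}$ factors emerging from $\int\varphi_j^{p+1}$ at the subcritical exponent $p=\tfrac{n+2}{n-2}-\tau$, cf. Lemma \ref{lem_interactions}(ii). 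The remaining regions contribute further $\varepsilon_{i,j}$-interactions. Finally I would insert the asymptotics of $r/k_\tau$ from \eqref{eq:r}, \eqref{eq:kp}, isolating the ratio $\tfrac{\alpha^2}{\alpha_{K,\tau}^{p+1}}$.

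\textbf{Assembling and the balancing relation.} Matching the normalization constant $4n(n-1)$ from the linear term against the $Ku^p$ term produces the leading factor $\grave c_0\big(1-\tfrac{\alpha^2}{\alpha_{K,\tau}^{p+1}}\tfrac{K_j}{\lambda_j^\theta}\alpha_j^{p-1}\big)$, collecting the subleading pieces gives exactly the $\grave c_2$, $\grave b_1$, $\grave d_1$ contributions, and the leftover is of the stated remainder order (the $\vert\partial J_\tau(u)\vert^2$ part coming from the first paragraph, the rest from Lemmas \ref{lem_interactions} and \ref{lem_testing_with_v}). The last assertion then follows by rearranging: solving the displayed identity for the leading factor and noting that $\partial J_\tau(u)\varphi_j$ and all the $\grave c_2,\grave b_1,\grave d_1$ terms are $O\!\big(\tau+\sum_{r\neq s}\lambda_r^{-2}+\varepsilon_{r,s}+\vert\partial J_\tau(u)\vert\big)$, while $\grave c_0>0$, yields $\tfrac{\alpha^2}{\alpha_{K,\tau}^{p+1}}\tfrac{K_j}{\lambda_j^\theta}\alpha_j^{p-1}=1+O(\cdots)$, consistent with the defining constraint of $A_u(q,\varepsilon)$. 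I expect the main obstacle to be the nonlinear integral $\int K(\alpha^i\varphi_i)^p\varphi_j$: the region decomposition according to which bubble dominates, the bookkeeping of the $\lambda^{-\theta}$ factors, and the dimension–dependent $H_a$/Weyl corrections must all be controlled so that every cross term and remainder lands precisely in the claimed error order.
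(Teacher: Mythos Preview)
The paper does not actually prove this lemma: it is listed in the Appendix among ``useful statements and formulae proved in \cite{MM1}'', so there is no argument in the present paper to compare your proposal against. Your outline is the standard route one takes for such an expansion (first variation formula, split into the $L_{g_0}$-part and the $K u^p$-part, diagonal/off-diagonal decomposition via Lemmas \ref{lem_emergence_of_the_regular_part} and \ref{lem_interactions}, Taylor expansion of $K$, then insert the asymptotics of $r/k_\tau$), and it is consistent with how the surrounding estimates in the Appendix are organized.

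One point deserves correction. You justify dropping the first-order term in the Taylor expansion of $K$ by saying ``the gradient term being of lower order since the centers sit near critical points''. In $V(q,\varepsilon)$ the centers $a_i$ are \emph{not} assumed to be close to critical points of $K$; that localization only occurs in $\bar V(q,\varepsilon)$. The correct reason the term $\nabla K_j\cdot x$ does not appear in the main expansion is oddness: to leading order $\varphi_j^{p+1}$ is radial about $a_j$, so $\int_{B_\varepsilon(a_j)}\nabla K_j\cdot x\,\varphi_j^{p+1}\,d\mu_{g_0}$ vanishes at principal order, and the residual contributions are what feed the $\vert\nabla K_r\vert^2/\lambda_r^2$ piece of the error. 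If you had genuinely relied on $a_j$ being near $\{\nabla K=0\}$, the lemma would not hold on all of $V(q,\varepsilon)$ as stated.
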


\begin{lemma}
 \label{lem_lambda_derivatives_at_infinity} 
 For $ u\in V(q, \varepsilon) $ and $ \varepsilon>0 $ sufficiently small the three quantities 
 $ \partial J_{\tau}(u) \phi_{2, j} $, 
 $ \partial J_{\tau}(\alpha^{i}\varphi_{i})\phi_{2, j} $ and 
 $ \frac{\lambda_{j}}{\alpha_{j}}\partial_{\lambda_{j}}J_{\tau}(\alpha^{i}\varphi_{i}) $ 
 can be written as 
 \begin{equation*}
 \begin{split}
 \frac{\alpha_{j}}{(\alpha_{K, \tau}^{\frac{2n}{n-2}})^{\frac{n-2}{n}}}
 \bigg(
 \tilde c_{1}\tau
 +
 \tilde c_{2}\frac{\Delta K_{j}}{K_{j}\lambda_{j}^{2}} 
 -
 \tilde b_{2}\sum_{j\neq i }\frac{\alpha_{i}}{\alpha_{j}}\lambda_{j}\partial_{\lambda_{j}}\varepsilon_{i, j} 
+
 \tilde d_{1}
 \begin{pmatrix}
 \frac{H_{j}}{\lambda_{j}^{3 }} &\text{for }\; n=5\\
 \frac{W_{j}\ln \lambda_{j}}{\lambda_{j}^{4}} &\text{for }\; n=6\\
 0 &\text{for }\; n\geq 7
 \end{pmatrix} 
 \bigg), 
 \end{split}
 \end{equation*}
 with positive constants $ \tilde c_{1}, \tilde c_{2}, \tilde d_{1}, \tilde b_{2} $ up to some error 
 $$ 
 O
 \big(
 \tau^{2}
 +
 \sum_{r\neq s}
 \frac{\vert \nabla K_{r}\vert^{2}}{\lambda_{r}^{2}}
 +
 \frac{1}{\lambda_{r}^{4}}
 +
 \varepsilon_{r, s}^{\frac{n+2}{n}}
 +
 \vert \partial J_{\tau}(u)\vert^{2}
 \big).
 $$ \end{lemma}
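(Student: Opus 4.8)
The plan is to reduce all three quantities to the single computation of $\partial J_\tau(\alpha^i\varphi_i)\phi_{2,j}$ and then expand the latter via the first-variation formula \eqref{first_variation_evaluating}. For the reduction, note that by the chain rule and the definition $\phi_{2,j}=-\lambda_j\partial_{\lambda_j}\varphi_j$ one has $\frac{\lambda_j}{\alpha_j}\partial_{\lambda_j}J_\tau(\alpha^i\varphi_i)=-\partial J_\tau(\alpha^i\varphi_i)\phi_{2,j}$, so the second and third quantities coincide up to sign. To pass from a general $u$ to its optimal decomposition $\alpha^i\varphi_i$ from Proposition \ref{prop_optimal_choice}, I would write $u=\alpha^i\varphi_i+v$ with $v\in H_u$ and expand $\partial J_\tau(u)\phi_{2,j}=\partial J_\tau(\alpha^i\varphi_i)\phi_{2,j}+\partial^2 J_\tau(\alpha^i\varphi_i)v\phi_{2,j}+o(\|v\|)$; since $v\perp_{L_{g_0}}\phi_{2,j}$ and $\|\phi_{2,j}\|$ is uniformly bounded, the correction is controlled by $\|\partial J_\tau(u)\|$ and absorbed into the error term $|\partial J_\tau(u)|^2$ listed in the statement.

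The main computation is the expansion of $\partial J_\tau(\alpha^i\varphi_i)\phi_{2,j}$. Using \eqref{first_variation_evaluating}, I would split it into the linear piece $\int L_{g_0}(\alpha^i\varphi_i)\phi_{2,j}\,d\mu_{g_0}$ and the nonlinear piece $\frac{r}{k_\tau}\int K(\alpha^i\varphi_i)^p\phi_{2,j}\,d\mu_{g_0}$, normalizing by $k_\tau^{2/(p+1)}\simeq(\bar c_0\alpha_{K,\tau}^{2n/(n-2)})^{(n-2)/n}$ from \eqref{eq:kp}. For the linear part I would invoke Lemma \ref{lem_emergence_of_the_regular_part}: the leading term $4n(n-1)\varphi_i^{(n+2)/(n-2)}$ cancels against the leading contribution of the nonlinear part at the critical exponent, and the residual regular-part corrections produce exactly the dimension-dependent block $\tilde d_1\,H_j/\lambda_j^3$ for $n=5$, $\tilde d_1\,W_j\ln\lambda_j/\lambda_j^4$ for $n=6$, and $0$ for $n\geq 7$ (the singular error $H_{s,a}$ being of higher order there).

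For the nonlinear part I would set $p=\frac{n+2}{n-2}-\tau$, $\theta=\frac{n-2}{2}\tau$, and expand $\varphi_i^{-\tau}=\lambda_i^{-\theta}(1+\theta\ln(1+\lambda_i^2 r^2)+O(\tau^2))$ together with the Taylor expansion of $K$ around $a_i$, namely $K=K_i+\nabla K_i\cdot x+\frac{1}{2}\nabla^2 K_i\,x^2+O(|x|^3)$. Integrating against the radial $\phi_{2,j}$, the odd gradient term contributes only to the error $|\nabla K_r|^2/\lambda_r^2$, while the radial quadratic part yields $\tilde c_2\,\Delta K_j/(K_j\lambda_j^2)$ and the $\tau$-expansion yields $\tilde c_1\tau$, the precise constants arising from the explicit log-weighted integrals $\int_{\R^n}\varphi^{4/(n-2)}\ln(1+\lambda^2r^2)\,|\lambda\partial_\lambda\varphi|^2$ and from the constants $c_1,c_2,c_3$ of Lemma \ref{lem_interactions}. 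The cross-bubble terms $i\neq j$ are handled by the interaction estimates (iii), (v), (vii) of Lemma \ref{lem_interactions}, producing the term $-\tilde b_2\sum_{i\neq j}\frac{\alpha_i}{\alpha_j}\lambda_j\partial_{\lambda_j}\varepsilon_{i,j}$ and collecting the remainders into $\varepsilon_{r,s}^{(n+2)/n}$ and $\lambda_r^{-4}$.

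The step I expect to be the main obstacle is the simultaneous subcritical and scale expansion of the nonlinear integral: one must track the interplay between the factor $\lambda_i^{-\theta}$, the logarithmic weight generated by $\varphi_i^{-\tau}$, and the operator $\lambda_j\partial_{\lambda_j}$, and show that after integrating by parts in $\lambda$ the surviving contribution is exactly $\tilde c_1\tau$ with the asserted positive constant, all subleading logarithmic and odd-in-$x$ pieces falling within the stated error. This is precisely the balance that, together with condition (iii) defining $\bar V(q,\varepsilon)$ in \eqref{def_refined_neighbourhood}, fixes the equilibrium scale $\lambda_j^2\simeq -c_2\Delta K(x_j)/(K(x_j)\tau)$, so getting the signs and sizes of $\tilde c_1$ and $\tilde c_2$ right is the crux of the argument.
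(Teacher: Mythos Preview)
The paper does not actually prove this lemma: Section~\ref{s:app} opens by stating that these estimates are ``proved in \cite{MM1}'', and Lemma~\ref{lem_lambda_derivatives_at_infinity} is simply quoted from that reference. So there is no proof in the present paper to compare against. Your outline is, however, exactly the standard route taken in that literature and in \cite{MM1}: reduce to $\partial J_\tau(\alpha^i\varphi_i)\phi_{2,j}$, plug into \eqref{first_variation_evaluating}, use Lemma~\ref{lem_emergence_of_the_regular_part} for the linear piece and the dimension-dependent block, Taylor-expand $K$ and $\varphi_i^{-\tau}$ for the $\tau$- and $\Delta K$-terms, and use Lemma~\ref{lem_interactions} for the interaction term. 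Your identification of the delicate step (the simultaneous subcritical and logarithmic expansion producing $\tilde c_1\tau$) is accurate.

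Two small points are worth sharpening. First, your observation that $\frac{\lambda_j}{\alpha_j}\partial_{\lambda_j}J_\tau(\alpha^i\varphi_i)=-\partial J_\tau(\alpha^i\varphi_i)\phi_{2,j}$ is correct given $\phi_{2,j}=-\lambda_j\partial_{\lambda_j}\varphi_j$; the lemma's claim that the three quantities have the \emph{same} expansion is thus to be read up to sign (or up to a sign convention carried over from \cite{MM1}). Second, in the reduction from $u$ to $\alpha^i\varphi_i$, the sentence ``the correction is controlled by $\|\partial J_\tau(u)\|$'' compresses two separate facts: (a) the orthogonality $v\perp_{L_{g_0}}\phi_{2,j}$, together with Lemma~\ref{lem_emergence_of_the_regular_part}, forces $\partial^2 J_\tau(\alpha^i\varphi_i)v\,\phi_{2,j}=o(1)\cdot\|v\|$ rather than merely $O(\|v\|)$; and (b) $\|v\|$ itself is bounded, via the uniform positivity of $\partial^2 J_\tau$ on $H_u$ and Lemma~\ref{lem_testing_with_v}, by $|\partial J_\tau(u)|$ plus the small quantities already appearing in the error. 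Only the product of these two bounds is quadratic and hence fits inside the stated $O(|\partial J_\tau(u)|^2+\cdots)$. As written your argument would only give a linear-in-$\|v\|$ error, which is not enough.
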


\begin{lemma} 
\label{lem_a_derivatives_at_infinity} 
For $ u\in V(q, \varepsilon) $ and $ \varepsilon>0 $ sufficiently small the three quantities 
 $ \partial J_{\tau}(u)\phi_{3, j} $, 
 $ \partial J_{\tau}(\alpha^{i}\varphi_{i})\phi_{3, j} $ and $ \frac{\nabla_{a_{j}}}{\alpha_{j}\lambda_{j}}J_{\tau}(\alpha^{i}\varphi_{i}) $ can be written as 
\begin{equation*}
\begin{split}
-\frac{\alpha_{j}}{(\alpha_{K, \tau}^{\frac{2n}{n-2}})^{\frac{n-2}{n}}}
\left(
\check{c}_{3}\frac{\nabla K_{j}}{K_{j}\lambda_{j}}
+
\check{c}_{4} \frac{\nabla \Delta K_j}{K_j \lambda_{j}^3}
+
\check{b}_{3}
\sum_{j\neq i}
\frac{\alpha_{i}}{\alpha_{j}}
\frac{\nabla_{a_{j}}}{\lambda_{j}}\varepsilon_{i, j}
\right), 
\end{split}
\end{equation*}
with positive constants $ \check{c}_{3}, \check{c}_{4}, \check{b}_{3} $ up to some error 
 $$ 
O\big(
\tau^{2}
+
\sum_{r\neq s}
\frac{\vert \nabla K_{r}\vert^{2}}{\lambda_{r}^{2}}
+
\frac{1}{\lambda_{r}^{4}}
+
\varepsilon_{r, s}^{\frac{n+2}{n}}
+
\vert \partial J_{\tau}(u)\vert^{2}
\big).
 $$ 
\end{lemma}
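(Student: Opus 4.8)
The plan is to prove the common expansion for the most explicit of the three quantities, $\frac{\nabla_{a_j}}{\alpha_j\lambda_j}J_\tau(\alpha^i\varphi_i)$, and then transfer it to the other two. First I would identify the second and third quantities by the chain rule: since only the $j$-th summand of $\alpha^i\varphi_i$ depends on $a_j$, one has $\frac{\nabla_{a_j}}{\lambda_j}(\alpha^i\varphi_i)=\alpha_j\phi_{3,j}$, so that $\frac{\nabla_{a_j}}{\alpha_j\lambda_j}J_\tau(\alpha^i\varphi_i)=\partial J_\tau(\alpha^i\varphi_i)\phi_{3,j}$ identically. To pass from $\partial J_\tau(\alpha^i\varphi_i)\phi_{3,j}$ to $\partial J_\tau(u)\phi_{3,j}$ I would write $u=\alpha^i\varphi_i+v$ with $v\perp_{L_{g_0}}\langle\phi_{k,i}\rangle$ and use the identity $\partial J_\tau(u)\phi_{3,j}-\partial J_\tau(\alpha^i\varphi_i)\phi_{3,j}=\int_0^1\partial^2J_\tau(\alpha^i\varphi_i+tv)\,v\,\phi_{3,j}\,dt$; the orthogonality of $v$ together with \eqref{second_variation_evaluating} and Lemma \ref{lem_testing_with_v} bounds this difference by products of $|\partial J_\tau(u)|$ with $\|v\|$, which are absorbed into the stated error $O(|\partial J_\tau(u)|^2+\cdots)$.

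The core is therefore to expand $\partial J_\tau(\alpha^i\varphi_i)\phi_{3,j}$ through \eqref{first_variation_evaluating}, splitting it into the linear piece $\int L_{g_0}(\alpha^i\varphi_i)\phi_{3,j}\,d\mu_{g_0}$ and the nonlinear piece $\tfrac{r}{k_\tau}\int K(\alpha^i\varphi_i)^p\phi_{3,j}\,d\mu_{g_0}$. In each I would separate the self-interaction at $a_j$ from the cross terms with $\varphi_i$, $i\neq j$. The cross terms are controlled by Lemma \ref{lem_interactions}(iii), producing exactly the interaction contribution $\check b_3\sum_{i\neq j}\frac{\alpha_i}{\alpha_j}\frac{\nabla_{a_j}}{\lambda_j}\varepsilon_{i,j}$ with $\check b_3>0$ built from the constant $b_3$. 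For the self-interaction I would pass to conformal normal coordinates centred at $a_j$ and rescale $x=a_j+y/\lambda_j$; in these variables $\phi_{3,j}$ is, to leading order, the \emph{odd} function $\lambda_j^{(n-2)/2}\nabla_y\delta_{0,1}$, while $\varphi_j^{\,p}$ is even. Expanding $K$ to third order, parity then kills the $K_j$ and $\nabla^2K_j$ contributions, keeps the linear term $\frac{1}{\lambda_j}\nabla K_j\cdot y$ (yielding the leading $\check c_3\frac{\nabla K_j}{K_j\lambda_j}$), and keeps the cubic term $\frac{1}{\lambda_j^3}\nabla^3K_j[y,y,y]$, whose angular average retains only the trace, giving $\check c_4\frac{\nabla\Delta K_j}{K_j\lambda_j^3}$; the explicit bubble integrals fix $\check c_3,\check c_4>0$.

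The one geometric subtlety is why no regular-part term (involving $\nabla H_j$ for $n=5$, or the Weyl contribution for $n=6$) survives here, in contrast to the $\alpha$- and $\lambda$-derivatives. I would use Lemma \ref{lem_emergence_of_the_regular_part}: the asymmetric correction to $L_{g_0}\varphi_j$, namely $r_a^{n-2}\nabla H_a(a)\cdot x\,\varphi_j^{\frac{n+2}{n-2}}$ for $n=5$ and its analogue for $n=6$, is the only part with the correct parity to pair with the odd $\phi_{3,j}$; a rescaling count shows it contributes at order $\lambda^{-(n-1)}$, i.e. one power of $\lambda^{-1}$ below the corresponding $\alpha$-derivative term of order $\lambda^{-(n-2)}$, the extra decay coming from the compulsory $x$-factor needed for a nonvanishing pairing. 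For $n\geq 5$ this is $O(\lambda^{-4})$, hence swallowed by the error term $\frac{1}{\lambda_r^4}$.

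Finally I would assemble the self- and cross-contributions, using $\frac{r}{k_\tau}=4n(n-1)\frac{\alpha^2}{\alpha_{K,\tau}^{p+1}}(1+o(1))$ together with the normalisation $\frac{\alpha^2}{\alpha_{K,\tau}^{p+1}}\frac{K_j}{\lambda_j^{\theta}}\alpha_j^{p-1}=1+O(\tau+\sum_{r\neq s}(\lambda_r^{-2}+\varepsilon_{r,s})+|\partial J_\tau(u)|)$ from Lemma \ref{lem_alpha_derivatives_at_infinity} to collapse the self-terms into the prefactor $-\frac{\alpha_j}{(\alpha_{K,\tau}^{2n/(n-2)})^{(n-2)/n}}$; the residual products of these corrections with the leading gradient land in $O(\tau^2+\sum_{r\neq s}(|\nabla K_r|^2/\lambda_r^2+1/\lambda_r^4+\varepsilon_{r,s}^{(n+2)/n})+|\partial J_\tau(u)|^2)$, as stated. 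The main obstacle I anticipate is precisely this order-bookkeeping: verifying the parity-induced extra factor $\lambda^{-1}$ that demotes the geometric terms into the error, and controlling the reduction $\partial J_\tau(u)\leftrightarrow\partial J_\tau(\alpha^i\varphi_i)$ by the self-referential error $|\partial J_\tau(u)|^2$.
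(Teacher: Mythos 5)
This lemma is stated in the paper without proof: the Appendix preamble says explicitly that it collects statements ``proved in \cite{MM1}'', so the benchmark is the derivation there, and your sketch reconstructs essentially that route — identifying $\frac{\nabla_{a_j}}{\alpha_j\lambda_j}J_\tau(\alpha^i\varphi_i)=\partial J_\tau(\alpha^i\varphi_i)\phi_{3,j}$ by definition of $\phi_{3,j}$, expanding the first variation \eqref{first_variation_evaluating} tested against $\phi_{3,j}$ with parity in conformal normal coordinates (killing the $K_j$ and $\nabla^2K_j$ terms, retaining $\nabla K_j/\lambda_j$ and the trace part $\nabla\Delta K_j/\lambda_j^3$), extracting the cross terms via Lemma \ref{lem_interactions}(iii), and correctly explaining through Lemma \ref{lem_emergence_of_the_regular_part} why the regular-part terms ($\nabla H_j$ for $n=5$, Weyl for $n=6$) are demoted to $O(\lambda^{-4})$ by the extra odd factor. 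The one step stated too loosely is the transfer $\partial J_\tau(\alpha^i\varphi_i)\phi_{3,j}\leftrightarrow\partial J_\tau(u)\phi_{3,j}$: the difference $\int_0^1\partial^2J_\tau(\alpha^i\varphi_i+tv)\,v\,\phi_{3,j}\,dt$ is not directly a product of $|\partial J_\tau(u)|$ with $\Vert v\Vert$ but rather $O(\varsigma\Vert v\Vert+\Vert v\Vert^2)$ with $\varsigma=\tau+\sum_r(\frac{\vert\nabla K_r\vert}{\lambda_r}+\frac{1}{\lambda_r^2})+\sum_{r\neq s}\varepsilon_{r,s}^{\frac{n+2}{2n}}$ (after the near-cancellation from $L_{g_0}\phi_{3,j}\approx 4n(n-1)\frac{n+2}{n-2}\varphi_j^{\frac{4}{n-2}}\phi_{3,j}$ and $\langle v,\phi_{3,j}\rangle_{L_{g_0}}=0$), so one must also invoke the standard coercivity bound $\Vert v\Vert\lesssim\vert\partial J_\tau(u)\vert+\varsigma$, obtained from the uniform positivity of $\partial^2J_\tau$ on $H_u$ together with Lemma \ref{lem_testing_with_v}, which you use only implicitly; with that bound made explicit every residual term squares into the stated error, and your argument is complete.
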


 \begin{lemma} 
\label{lem_upper_bound}
For every $ u\in V(q, \varepsilon) $ there holds
\begin{equation*}
\vert \partial J_\tau(u)\vert \
\lesssim 
\tau
+
\sum_{r\neq s}\frac{\vert \nabla K_{r}\vert}{\lambda_{r}}
+
\frac{1}{\lambda_{r}^{2}}
+
\vert 1
-
\frac{\alpha^{2}}{\alpha_{K, \tau}^{p+1}}
\frac{K_{r}}{\lambda_{r}^{\theta}}\alpha_{r}^{p-1} \vert
+
\varepsilon_{r, s}^{\frac{n+2}{2n}}
+
\Vert v \Vert. 
\end{equation*}
\end{lemma}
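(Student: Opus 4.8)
The plan is to bound $|\partial J_\tau(u)| = \sup_{\|\nu\|=1}|\partial J_\tau(u)\nu|$ by splitting every test direction into its tangential and normal parts relative to the approximate-solution manifold. Normalizing $k_\tau=1$ (as in Lemma \ref{lem_testing_with_v}, which is legitimate since $\partial J_\tau(su)=s^{-1}\partial J_\tau(u)$ and fixing $k_\tau$ fixes the scale, so that $\alpha_i,\|\varphi_i\|\simeq 1$), I would write $u=\alpha^i\varphi_i+v$ via Proposition \ref{prop_optimal_choice}, so that $v\perp_{L_{g_0}}\langle\phi_{k,i}\rangle$, i.e. $v\in H_u$. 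An arbitrary unit vector is then decomposed as $\nu=\Pi_{\langle\phi_{k,i}\rangle}\nu+\nu^\perp$ with $\nu^\perp\in H_u$. Since the $\phi_{k,i}$ are uniformly bounded in $W^{1,2}$ and span a finite-dimensional space with uniformly controlled Gram matrix, the coefficients of the tangential part are $O(1)$; hence it suffices to estimate $\partial J_\tau(u)\phi_{k,j}$ for each $k\in\{1,2,3\}$, $j\in\{1,\dots,q\}$, together with $\partial J_\tau(u)\nu^\perp$ for unit $\nu^\perp\in H_u$.

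For the tangential directions I would invoke directly Lemmas \ref{lem_alpha_derivatives_at_infinity}, \ref{lem_lambda_derivatives_at_infinity}, \ref{lem_a_derivatives_at_infinity}, which give $\partial J_\tau(u)\phi_{1,j}$, $\partial J_\tau(u)\phi_{2,j}$, $\partial J_\tau(u)\phi_{3,j}$ in closed form. Reading off the leading terms: $\phi_{1,j}$ yields $|1-\frac{\alpha^2}{\alpha_{K,\tau}^{p+1}}\frac{K_j}{\lambda_j^\theta}\alpha_j^{p-1}|$ together with $\frac{\Delta K_j}{K_j\lambda_j^2}$-, $\varepsilon_{i,j}$- and $\frac{H_j}{\lambda_j^3}$-type contributions; $\phi_{2,j}$ yields $\tau$, $\frac{\Delta K_j}{K_j\lambda_j^2}$ and $\lambda_j\partial_{\lambda_j}\varepsilon_{i,j}$; and $\phi_{3,j}$ yields $\frac{\nabla K_j}{K_j\lambda_j}$, $\frac{\nabla\Delta K_j}{K_j\lambda_j^3}$ and $\frac{1}{\lambda_j}\nabla_{a_j}\varepsilon_{i,j}$. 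Each is dominated by a term on the right-hand side: $\frac{\Delta K_j}{K_j\lambda_j^2},\frac{H_j}{\lambda_j^3},\frac{\nabla\Delta K_j}{K_j\lambda_j^3}=O(\lambda_j^{-2})$ since $K_j,H_j,\nabla\Delta K_j$ are bounded, while $\lambda_j\partial_{\lambda_j}\varepsilon_{i,j}$ and $\frac{1}{\lambda_j}\nabla_{a_j}\varepsilon_{i,j}$ are $O(\varepsilon_{i,j})\le\varepsilon_{i,j}^{(n+2)/(2n)}$ by Lemma \ref{lem_interactions}(vii). The error terms common to the three lemmas, $\tau^2+\sum_{r\neq s}(\frac{|\nabla K_r|^2}{\lambda_r^2}+\frac{1}{\lambda_r^4}+\varepsilon_{r,s}^{(n+2)/n})$, are squares of right-hand-side quantities and hence negligible against them.

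For the normal direction I would write $\partial J_\tau(u)\nu^\perp=\partial J_\tau(\alpha^i\varphi_i)\nu^\perp+[\partial J_\tau(u)-\partial J_\tau(\alpha^i\varphi_i)]\nu^\perp$. The first summand is bounded by Lemma \ref{lem_testing_with_v}, whose estimate $\sum_r\frac{\tau}{\lambda_r^\theta}+\sum_r\frac{|\nabla K_r|}{\lambda_r^{1+\theta}}+\sum_r\frac{1}{\lambda_r^{2+\theta}}+\sum_{r\neq s}\frac{\varepsilon_{r,s}^{(n+2)/(2n)}}{\lambda_r^\theta}$ reduces to right-hand-side terms once one uses $\lambda_r^\theta\simeq 1$, a consequence of $\lambda_r^\tau<1+\varepsilon$ in the definition of $A_u(q,\varepsilon)$ together with $\theta=\frac{n-2}{2}\tau$. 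The second summand is controlled by the mean-value inequality and the uniform $C^{2,\alpha}$ bound on $J_\tau$ over $U_\varepsilon$: as $u-\alpha^i\varphi_i=v$ and $\partial^2 J_\tau$ is bounded in operator norm there, one gets $[\partial J_\tau(u)-\partial J_\tau(\alpha^i\varphi_i)]\nu^\perp=O(\|v\|\,\|\nu^\perp\|)=O(\|v\|)$, which accounts for the $\|v\|$ term.

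Collecting the tangential and normal contributions yields $|\partial J_\tau(u)|\lesssim R+|\partial J_\tau(u)|^2$, where $R$ is the claimed right-hand side. The one genuinely delicate point, and the main obstacle, is the self-referential quadratic error $|\partial J_\tau(u)|^2$ inherited from Lemmas \ref{lem_alpha_derivatives_at_infinity}--\ref{lem_a_derivatives_at_infinity}. I would absorb it by first recording that on $V(q,\varepsilon)$ all constituents of $R$ are $O(\varepsilon)$ with $\tau\ll\varepsilon$, so $R=o(1)$; writing $X:=|\partial J_\tau(u)|$, the quadratic inequality $X\le C(R+X^2)$ admits only the two branches $X\lesssim R$ and $X\gtrsim 1/C$. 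The upper branch is excluded by the a priori boundedness of $\partial J_\tau$ on $U_\varepsilon$ combined with continuity from the unperturbed bubble configuration $v=0$, where $X$ is manifestly $o(1)$; hence $X\le\frac{1}{2C}$ and the term $CX^2\le\frac12 X$ is absorbed into the left-hand side, giving $|\partial J_\tau(u)|\lesssim R$, as claimed.
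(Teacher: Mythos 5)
Your architecture is the natural one, and indeed there is nothing in this paper to compare it against line by line: Lemma \ref{lem_upper_bound} is imported from \cite{MM1} without proof. Your decomposition of a unit test direction into $\Pi_{\langle\phi_{k,i}\rangle}\nu+\nu^{\perp}$, the use of Lemmas \ref{lem_alpha_derivatives_at_infinity}, \ref{lem_lambda_derivatives_at_infinity}, \ref{lem_a_derivatives_at_infinity} on the tangential modes, of Lemma \ref{lem_testing_with_v} on $H_u$, and the $C^{2}$-perturbation $[\partial J_{\tau}(u)-\partial J_{\tau}(\alpha^{i}\varphi_{i})]\nu^{\perp}=O(\Vert v\Vert)$ to generate the $\Vert v\Vert$ term are all correct, as are the individual reductions: $\lambda_{r}^{\theta}\simeq 1$ from $\lambda_{r}^{\tau}<1+\varepsilon$, the domination of $\frac{\Delta K_{j}}{K_{j}\lambda_{j}^{2}}$, $\frac{H_{j}}{\lambda_{j}^{3}}$, $\frac{\nabla\Delta K_{j}}{K_{j}\lambda_{j}^{3}}$ by $\lambda_{j}^{-2}$, of $\lambda_{j}\partial_{\lambda_{j}}\varepsilon_{i,j}$ and $\frac{1}{\lambda_{j}}\nabla_{a_{j}}\varepsilon_{i,j}$ by $\varepsilon_{i,j}\leq\varepsilon_{i,j}^{\frac{n+2}{2n}}$ via Lemma \ref{lem_interactions} (vii), and of the quadratic error terms by the right-hand side $R$.

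The genuine gap is your treatment of the self-referential term $\vert\partial J_{\tau}(u)\vert^{2}$ in the quadratic inequality $X\leq C(R+X^{2})$. The a priori boundedness of $\partial J_{\tau}$ on $U_{\varepsilon}$ does not exclude the branch $X\gtrsim 1/C$; it only gives $X=O(1)$, which is compatible with it. Your continuity argument then rests on two unestablished facts: that $V(q,\varepsilon)$ is path-connected (never proved here, and needed for a connectedness argument to propagate smallness), and that $X$ is ``manifestly'' $o(1)$ at $v=0$ --- which is not manifest from the lemmas you quote, since Lemmas \ref{lem_alpha_derivatives_at_infinity}--\ref{lem_a_derivatives_at_infinity} carry the error $\vert\partial J_{\tau}(u)\vert^{2}$ even when $u=\alpha^{i}\varphi_{i}$, so at the base point you face exactly the same dichotomy you are trying to resolve.

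The repair is direct and avoids connectedness altogether: prove a crude a priori bound $X=O(\varepsilon)$ on all of $V(q,\varepsilon)$, which puts $X$ below the gap $\tfrac{1}{2C}$ and lets $CX^{2}\leq\tfrac{1}{2}X$ be absorbed. By your own perturbation step it suffices to bound $\vert\partial J_{\tau}(\alpha^{i}\varphi_{i})\vert$, and from \eqref{first_variation_evaluating} with $k_{\tau}=1$ one has, testing with a unit $\nu$ and using $L_{g_{0}}\varphi_{i}=4n(n-1)\varphi_{i}^{\frac{n+2}{n-2}}+o(1)$ in $W^{-1,2}$ (Lemma \ref{lem_emergence_of_the_regular_part}), H\"older bounds on the cross terms $\varphi_{i}^{p-1}\varphi_{j}$, the freezing $K=K_{i}+O(d(a_{i},\cdot))$, and $\varphi_{i}^{-\tau}=\lambda_{i}^{-\theta}(1+O(\tau\ln(1+\lambda_{i}^{2}r^{2})))$ with $\tau\ln\lambda_{i}=O(\varepsilon)$, the expression collapses to $\sum_{i}\alpha_{i}\bigl[4n(n-1)-r\,K_{i}\alpha_{i}^{p-1}\lambda_{i}^{-\theta}\bigr]\int\varphi_{i}^{\frac{n+2}{n-2}}\nu\,d\mu_{g_{0}}+O(\varepsilon)$; the bracket is $O(\varepsilon)$ by the constraint $\bigl\vert 1-\frac{r\alpha_{i}^{4/(n-2)}K(a_{i})}{4n(n-1)k_{\tau}}\bigr\vert<\varepsilon$ in the definition of $A_{u}(q,\varepsilon)$, since $\alpha_{i}^{p-1}\lambda_{i}^{-\theta}=\alpha_{i}^{\frac{4}{n-2}}(1+O(\varepsilon))$. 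Hence $X\lesssim\varepsilon+\Vert v\Vert\lesssim\varepsilon$, and the absorption goes through; with this substitution your proof is complete.
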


\begin{thm}
\label{lem_top_down_cascade} 
Suppose that $ n \geq 5 $, $ K : M \longrightarrow \R_{+} $ is Morse and satisfies \eqref{eq:nd}. 
Then for $ \varepsilon>0 $ sufficiently small there exists $ c>0 $ such that for any 
\begin{equation*}
u\in V(q, \varepsilon) \quad \text{ with } \quad  k_{\tau}=1
\end{equation*}
there holds 
\begin{equation*}
\vert \partial J(u)\vert
\geq
c\big( 
\tau +\sum_{r\neq s}\frac{\vert \nabla K_{r}\vert}{\lambda_{r}}+\frac{1}{\lambda_{r}^{2}}
+
\big\vert 1
-
\frac{\alpha^{2}}{\alpha_{K, \tau}^{p+1}}
\frac{K_{r}}{\lambda_{r}^{\theta}}\alpha_{r}^{p-1} \big\vert
+\varepsilon_{r, s}\big), 
\end{equation*}
unless 
there is a violation of at least one of the four conditions 
\begin{enumerate}[label=(\roman*)]
 \item \quad 
 $ \tau>0 $ ; 
 \item \quad 
 $ 
\exists \; x_{i}\neq x_{j}
\in 
\{\nabla K=0\}\cap \{\Delta K<0\}
 $ 
and
 $ d(a_{i}, x_{i})=O(\frac{1}{\lambda_{i}}) $ ;
 \item \quad 
 $ \alpha_{j}=\Theta\, \cdot(\frac{\lambda_{j}^{\theta}}{K_{j}})^{\frac{1}{p-1}}+o(\frac{1}{\lambda_{j}^{2}}); $ 
 \item \quad 
 $ \tilde c_{1}\tau
=
-
\tilde c_{2}\frac{\Delta K_{k}}{K_{k}\lambda_{k}^{2}} 
+
o(\frac{1}{\lambda_{k}^{2}}) $ 
\end{enumerate}
where 
 $ \Theta $ is a positive constant, uniformly bounded and bounded away from zero, that depends on $ u $, 
cf. Remark 6.2 in \cite{MM1}. In the latter case there holds 
 $$ \lambda_{1}\simeq \ldots \lambda_{q}\simeq \lambda=\frac{1}{\sqrt{\tau}} $$ 
and setting $ a_{j}=\exp_{g_{x_{j}}}(\bar a_{j}) $, 
we still have up to an error $ o(\frac{1}{\lambda^{3}}) $ the lower bound
\begin{equation*}
\begin{split}
\vert \partial J(u) \vert 
\gtrsim &
\sum_{j}
\vert
\tau
 +
\frac{2}{9}\frac{\Delta K(x_{j})}{K(x_{j})\lambda_{j}^{2}} 
+
\frac{512}{9\pi}
[
\frac{H(x_{j})}{\lambda_{j}^{3}}
+
\sum_{j\neq i }\sqrt{\frac{K(x_{j})}{K(x_{i})}} \frac{G_{g_{0}}(x_{i}, x_{j})}{\gamma_{n}(\lambda_{i}\lambda_{j})^{\frac{3}{2}}}
] 
\vert
 \\
& 
+
\sum_{j}
\vert 
\frac{\bar a_{j}}{\lambda_{j}}
+
\frac{\check c_{4}}{\check c_{3}}
(\nabla^{2}K(x_{j}))^{-1}
\frac{\nabla \Delta K(x_{j})}{\lambda_{j}^3} \vert 
\\
& +
\sum_{j}
\vert 
\alpha_{j}
-
\Theta \cdot
\sqrt[p-1]
{
\frac{\lambda_{j}^{\theta}}{K(a_{j})}
(
1
-
\frac{1}{90}
\left(
\frac{\Delta K(x_{j})}{K(x_{j})\lambda_{j}^{2}}
+
\frac{2816}{\pi}
\frac{H(x_{j})}{\lambda_{j}^{3}}
-
\frac
{
\sum_{k}
(\frac{\Delta K(x_{k})}{K(x_{k})^{2}\lambda_{k}^{2}}
+
\frac{2816}{\pi}
\frac{H(x_{k})}{K(x_{k})\lambda_{k}^{3}}
)
}
{
\sum_{k}\frac{1}{K(x_{k})}
}
\right)
)
}
\vert
 \end{split}
\end{equation*}
in case $ n=5 $ and 
\begin{equation*}
\begin{split}
\vert \partial J(u)\vert
\gtrsim &
\sum_{j}
(
\vert \tau+\frac{\tilde c_{2}}{\tilde c_{1}}\frac{\Delta K(x_{j})}{K(x_{j})\lambda_{j}^{2}}\vert \\
& +
\vert
\frac{ \bar a_{j}}{\lambda_{j}}
 +
\frac{\check{c}_{4}}{\check{c}_{3}} (\nabla^{2}K(x_{j}))^{-1}\frac{\nabla \Delta K(x_{j})}{\lambda_{j}^3}
\vert
+
\vert 
\alpha_{j}
-
\Theta \cdot
\sqrt[p-1]
{
\frac{\lambda_{j}^{\theta}}{K(a_{j})}}
\vert
)
\end{split}
\end{equation*}
in case $ n\geq 6 $. 
The constants appearing above are defined by
\begin{itemize}
\item \quad
 $ \bar c_{0}=\int_{\R^{n}}\frac{dx}{(1+r^{2})^n} $ ;
\item \quad
 $ \tilde c_{1}=\frac{n(n-1)(n-2)^{2}}{\bar c_{0}^{\frac{n-2}{n}}}\underset{\R^{n}}{\int}\frac{1-r^{2}}{(1+r^{2})^{n+1}}\ln\frac{1}{1+r^{2}}dx; $ 
\item \quad
 $ \tilde c_{2}=-\frac{(n-1)(n-2)}{\bar c_{0}^{\frac{n-2}{n}}}\underset{\R^{n}}{\int}\frac{r^{2}(1-r^{2})}{(1+r^{2})^{n+1}}dx; $ 
\item \quad
 $ \check c_{3}=\int_{\R^{n}}\frac{4(n-1)(n-2)}{(1+r^{2})^{n}}dx; $ 
\item \quad
 $ \tilde b_{2} = \frac{4n(n-1)}{\bar c_{0}^{\frac{n-2}{n}}} \int_{\R^n} \frac{dx}{(1+r^2)^{\frac{n+2}{2}}} $ ; 
\item \quad
 $ \check c_{4}=\int_{\R^{n}}\frac{2(n-1)r^{2}}{(1+r^{2})^{n}}dx $ ; 
\item \quad
 $ \tilde d_{1} = \frac{4n(n-1)}{\bar c_{0}^{\frac{n-2}{n}}} \int_{\R^n} r^n \frac{(n+2-n r^2)}{(1+r^2)^{n+2}} dx $.
\end{itemize}
\end{thm}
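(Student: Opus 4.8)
The plan is to estimate $|\partial J_\tau(u)|$ from below by the sizes of its components along the finite-dimensional bubble directions $\phi_{k,i}$ and along the orthogonal space $H_u$ of \eqref{eq:Hu}, and then to insert the expansions of Lemmas \ref{lem_alpha_derivatives_at_infinity}, \ref{lem_lambda_derivatives_at_infinity} and \ref{lem_a_derivatives_at_infinity}. Splitting $W^{1,2}(M,g_0)=\langle\phi_{k,i}\rangle_{k,i}\oplus H_u$ and using that for distinct centers $d(a_i,a_j)\simeq 1$ the interactions $\varepsilon_{i,j}$ are small, Lemma \ref{lem_interactions} shows the Gram matrix of the $\phi_{k,i}$ is close to block-diagonal with explicitly computable single-bubble blocks; after normalising one obtains
\begin{equation*}
|\partial J_\tau(u)|^2\gtrsim\sum_{k,i}\frac{|\partial J_\tau(u)\phi_{k,i}|^2}{\|\phi_{k,i}\|^2}+\|\Pi_{H_u}\partial J_\tau(u)\|^2,
\end{equation*}
which reduces the problem to the finitely many directional derivatives together with the $H_u$-component.

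First I would dispose of the $H_u$-component. Writing $u=\alpha^i\varphi_i+v$ with $v\in H_u$ and expanding, for $v\in H_u$,
\begin{equation*}
\partial J_\tau(u)v=\partial J_\tau(\alpha^i\varphi_i)v+\partial^2 J_\tau(\alpha^i\varphi_i)v^2+o(\|v\|^2),
\end{equation*}
the uniform positivity $\partial^2 J_\tau(\alpha^i\varphi_i)v^2\gtrsim\|v\|^2$ on $H_u$ (see \cite{bab}), the bound on $\partial J_\tau(\alpha^i\varphi_i)v$ from Lemma \ref{lem_testing_with_v}, and $|\partial J_\tau(u)v|\le|\partial J_\tau(u)|\,\|v\|$ together give $\|v\|\lesssim|\partial J_\tau(u)|+(\text{terms already appearing in the claimed bounds})$. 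Hence the orthogonal direction never obstructs the estimate, and the $\phi_{k,i}$-components of $\partial J_\tau(u)$ agree, up to the errors quoted in the three lemmas, with those of $\partial J_\tau(\alpha^i\varphi_i)$ --- this is exactly the identification of the three quantities asserted there.

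Next I substitute the three lemmas to obtain the coarse estimate. The leading constants $\grave c_0,\tilde c_1,\tilde c_2,\check c_3$ are positive, while the quadratic errors $\tau^2,\lambda_r^{-4},\varepsilon_{r,s}^{\frac{n+2}{n}}$ and $|\partial J_\tau(u)|^2$ are dominated by the linear quantities whenever the latter are small and may be absorbed on the left. This controls the combinations $|1-\tfrac{\alpha^2}{\alpha_{K,\tau}^{p+1}}\tfrac{K_j}{\lambda_j^\theta}\alpha_j^{p-1}|$, $|\tilde c_1\tau+\tilde c_2\tfrac{\Delta K_j}{K_j\lambda_j^2}|$, $\tfrac{|\nabla K_j|}{\lambda_j}$ and $\varepsilon_{i,j}$. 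To upgrade these to the separate quantities $\tau,\lambda_r^{-2},|\nabla K_r|/\lambda_r,\varepsilon_{r,s}$ I use the nondegeneracy \eqref{eq:nd}: at a critical point $\Delta K_j\neq0$, so the two summands of the $\lambda$-combination can cancel only if $\Delta K_j<0$ and the balance (iv) holds; likewise the $\alpha$-combination degenerates only under (iii), and the $a$-derivative forces $a_j$ towards a critical point. Thus whenever at least one of the balances (ii)--(iv) fails, the relevant combination is bounded below by its individual summands, which yields the coarse lower bound in the non-balanced regime.

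Finally, the refined bound is obtained in the balanced regime, where (ii)--(iv) hold at leading order and the coarse leading terms cancel. Reading (iv) for each $j$ and using that $\Delta K(x_j)<0$, $K(x_j)>0$ are fixed gives $\lambda_j^2\simeq-\tfrac{\tilde c_2}{\tilde c_1}\tfrac{\Delta K(x_j)}{K(x_j)\,\tau}$, hence $\lambda_1\simeq\dots\simeq\lambda_q\simeq\tau^{-1/2}$; in this regime all the error terms of the three lemmas are $o(\lambda^{-3})$, so it remains to push their expansions to order $\lambda^{-3}$ and to evaluate the integrals $\tilde c_1,\tilde c_2,\check c_3,\check c_4,\tilde d_1$ at $n=5,6,7$ by the usual Gamma-function computations. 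The main obstacle is dimension $n=5$: by Lemma \ref{lem_emergence_of_the_regular_part}(i) the regular part of the Green's function produces a self-interaction $H(x_j)/\lambda_j^3$, and by \eqref{eq:eijm} the inter-bubble interactions $\varepsilon_{i,j}\simeq G_{g_0}(x_i,x_j)/(\gamma_n(\lambda_i\lambda_j)^{3/2})$ enter at the very order $\lambda^{-3}$ one wishes to detect, so both must be tracked precisely through the $\grave d_1,\tilde d_1$ and $\tilde b_2,\check b_3$ terms; for $n\ge6$ these contributions are $o(\lambda^{-3})$ and drop out, giving the simpler estimate. Collecting all terms of order $\lambda^{-3}$ then produces the stated refined lower bounds.
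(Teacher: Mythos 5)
The paper itself contains no proof of this theorem: the Appendix explicitly states that it, together with Lemmas \ref{lem_alpha_derivatives_at_infinity}--\ref{lem_a_derivatives_at_infinity}, is imported from \cite{MM1}, and your proposal reconstructs essentially the cascade argument used there. Your route coincides with the source proof: split off $H_{u}$ and control $\Vert v\Vert$ via the uniform positivity of $\partial^{2}J_{\tau}$ on $H_{u}$ together with Lemma \ref{lem_testing_with_v}, then run the sign analysis through the $\alpha$-, $\lambda$- and $a$-derivative expansions (positivity of $\tilde c_{1},\tilde c_{2}$, the nondegeneracy \eqref{eq:nd} forcing $\Delta K(x_{j})<0$ under balance (iv), absorption of the quadratic errors including $\vert\partial J_{\tau}(u)\vert^{2}$ when the gradient is small), and finally refine to order $\lambda^{-3}$ in the balanced regime where $\lambda_{j}\simeq\tau^{-1/2}$, correctly isolating $n=5$ as the case in which $H(x_{j})/\lambda_{j}^{3}$ and $\varepsilon_{i,j}\simeq\lambda^{2-n}$ enter at the critical order --- this is exactly the top-down cascade of \cite{MM1}.
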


\bigskip

\noindent
From the proof of Proposition 5.1 and Sections 4, 5 and 6 in \cite{MM1} we will need the estimates

\begin{enumerate}[label=(\roman{*})]
\item up to an error of order 
 $ 
O\bigg(\tau^{2}+\sum_{r}\frac{1}{\lambda_{r}^{4}}+\sum_{r\neq s}\varepsilon_{r, s}^{\frac{n+2}{n}}\bigg) $ there holds 
\begin{equation}\label{intergral_sum_of_bubble_nonlinear_evaluated}
\begin{split}
\int K & (\alpha^{i}\varphi_{i})^{p+1}d\mu_{g_{0}} \\
= &
\sum_{i}
\left(
\bar c_{0}\frac{K_{i}}{\lambda_{i}^{\theta}}\alpha_{i}^{p+1}
+
\bar c_{1}\frac{K_{i}}{\lambda_{i}^{\theta}}\alpha_{i}^{\frac{2n}{n-2}}\tau
+
\bar c_{2}\frac{\Delta K_{i}}{\lambda_{i}^{2+\theta}} \alpha_{i}^{\frac{2n}{n-2}} 
\right) \\
& \quad\quad\quad +
\bar d_{1}\sum_{i}\frac{K_{i}}{\lambda_{i}^{\theta}}\alpha_{i}^{\frac{2n}{n-2}}
\begin{pmatrix}
\frac{H_{i}}{\lambda_{i}^{3 }} \\
\frac{W_{i}\ln \lambda_{i}}{\lambda_{i}^{4}} \\
0 
\end{pmatrix} 
+
\bar b_{1}\sum_{i\neq j}\alpha_{i}^{\frac{n+2}{n-2}}\alpha_{j}
\frac{K_{i}}{\lambda_{i}^{\theta}}\varepsilon_{i, j}
\end{split}
\end{equation}
with 
 $ (\bar b_{1}=\frac{2n}{n-2}b_{1}) $ and
 $ 
\bar d_{1} 
=
\underset{\R^{n}}{\int} \frac{r^{n}dx}{(1+r^{2})^{n+1}}; 
 $ 
\item recalling \eqref{eq:eijm} we have
\begin{equation}\label{L_g_0_bubble_interaction}
\int \varphi_{i} L_{g_{0}} \varphi_{j}d\mu_{g_{0}}
=
\tilde b_{1}\varepsilon_{i, j}
+
O(\sum_{r\neq s}\frac{1}{\lambda_{r}^{4}}+\varepsilon_{r, s}^{\frac{n+2}{n}}), \; 
\quad 
\tilde b_{1}=4n(n-1)b_{1}; 
\end{equation}
\item up to an error 
 $ O(\tau^{2}+\frac{1}{\lambda_{i}^{4}}) $, there holds 
\begin{equation}\label{single_bubble_L_g_0_integral_expansion_exact}
\begin{split}
\int \frac{\varphi_{i} L_{g_{0}}\varphi_{i}}{4n(n-1)} d\mu_{g_{0}}
= &
\bar c_{0};
\end{split}
\end{equation}
 \item up to an error of order 
 $ 
O(\tau^{2}+\sum_{r}\frac{1}{\lambda_{r}^{4}}+\sum_{r\neq s}\varepsilon_{r, s}^{\frac{n+2}{n}})
 $ 
we have
\begin{equation}\label{r_alpha_delta_expansion} 
\begin{split}
\alpha^{i}\alpha^{j}\int \varphi_{i} L_{g_{0}}\varphi_{j}d\mu_{g_{0}}
= &
4n(n-1)\bar c_{0}
\sum_{i}
\alpha_{i}^{2}
+
\tilde b_{1}\sum_{i\neq j}\alpha_{i}\alpha_{j}\varepsilon_{i, j}. 
\end{split}
\end{equation}
\item If $ \varphi_{i} $ is as in \eqref{eq:bubbles}, then 
\begin{equation}
\begin{split}
\label{non_linear_v_part_interaction}
 \bigg\vert \int \varphi_{i}^{\frac{n+2}{n-2}}\nu d\mu_{g_{0}}\bigg\vert 
 \leq &
 \Vert v \Vert \bigg\Vert \frac{L_{g_{0}}\varphi_{i}}{4n(n-1)}-\varphi_{i}^{\frac{n+2}{n-2}}\bigg\Vert_{L_{g_{0}}^{\frac{2n}{n+2}}} 
\\ = &
 O
 \begin{pmatrix}
 \lambda_{i}^{-3} & \;\text{ for }\; n=5\\
 \ln^{\frac{2}{3}}\lambda_{i}\lambda_{i}^{-\frac{10}{3}} & \;\text{ for }\; n=6\\
 \lambda_{i}^{-4} &\;\text{ for }\; n\geq 7
 \end{pmatrix}
 \Vert v \Vert; 
\end{split}
 \end{equation}
\item up to an error
 $ O(\tau^{2}+\frac{1}{\lambda_{i}^{4}}) $ we have
with 
 $ 
\bar c_{2}=\frac{1}{2n}\underset{\R^{n}}{\int}\frac{r^{2}dx}{(1+r^{2})^{n}}; 
 $ 
 \begin{equation}\label{single_bubble_K_integral_expansion_exact}
\begin{split}
\int K \varphi_{i}^{p+1}d\mu_{g_{0}}
= &
\frac{\bar c_{0}K_{i}}{\lambda_{i}^{\theta}}
+
\bar c_{1}\frac{K_{i}\tau}{\lambda_{i}^{\theta}}
+
\bar c_{2}\frac{\Delta K_{i}}{\lambda_{i}^{2+\theta}}
+
\bar d_{1}K_{i}
\begin{pmatrix}
\frac{H_{i}}{\lambda_{i}^{3+\theta }} \\
\frac{W_{i}\ln \lambda_{i}}{\lambda_{i}^{4+\theta}} \\
0 
\end{pmatrix}
;
\end{split}
\end{equation} 
\item up to an error or order 
 $ 
O
(
\tau^{2}
+
\sum_{r\neq s} \frac{\vert \nabla K_{r}\vert^{2}}{\lambda_{r}^{2}}
+
\frac{1}{\lambda_{r}^{4}}
+
\varepsilon_{r, s}^{\frac{n+2}{n}}
)
 $ there holds
\begin{equation}\label{pure_functional_denominator_expanded}
\begin{split}
J_{\tau}(\alpha^{i} \varphi_{i})
= &
\frac
{\alpha^{i}\alpha^{j}\int \varphi_{i} L_{g_{0}}\varphi_{j}d\mu_{g_{0}}}
{(\int K(\sum_{i}\alpha_{i}\varphi_{i})^{p+1})^{\frac{2}{p+1}}} \\
= & 
\frac
{\alpha^{i}\alpha^{j}\int \varphi_{i} L_{g_{0}} \varphi_{j}d\mu_{g_{0}}}
{(\bar c_{0}\sum_{i}\frac{K_{i}}{\lambda_{i}^{\theta}}\alpha_{i}^{p+1})^{\frac{2}{p+1}}}
\Bigg(
1
-
\bar c_{1}\sum_{i}\frac{K_{i}}{\lambda_{i}^{\theta}}\frac{\alpha_{i}^{\frac{2n}{n-2}}}{\alpha_{K, \tau}^{\frac{2n}{n-2}}}\tau
\\
& \quad\quad\quad\quad\quad\quad\quad\quad\quad\quad\, -
\bar c_{2}\sum_{i}\frac{\Delta K_{i}}{\lambda_{i}^{2+\theta}} \frac{\alpha_{i}^{\frac{2n}{n-2}} }{\alpha_{K, \tau}^{\frac{2n}{n-2}}}
\\
& \quad\quad\quad\quad\quad\quad\quad\quad\quad\quad\, -
\bar d_{1}\sum_{i}\frac{K_{i}}{\lambda_{i}^{\theta}}
\begin{pmatrix}
\frac{H_{i}}{\lambda_{i}^{3 }} \\
\frac{W_{i}\ln \lambda_{i}}{\lambda_{i}^{4}} \\
0 
\end{pmatrix} 
\frac{\alpha_{i}^{\frac{2n}{n-2}}}{\alpha_{K, \tau}^{\frac{2n}{n-2}}}
\\
& \quad\quad\quad\quad\quad\quad\quad\quad\quad\quad\, -
\bar b_{1}\sum_{i\neq j}\frac{\alpha_{i}^{\frac{n+2}{n-2}}\alpha_{j}}{\alpha_{K, \tau}^{\frac{2n}{n-2}}}
\frac{K_{i}}{\lambda_{i}^{\theta}}\varepsilon_{i, j}
\Bigg);
\end{split}
\end{equation}
\item if $ \varepsilon_{i, j} $ is as in \eqref{eq:eijm}, then 
in case $ j<i\; \text{ or }\; 
d_{g_{0}}(a_{i}, a_{j})
\neq 
o(1) $ 
\begin{equation}\label{lambda_interaction_derivative_lower_bound}
\lambda_{j}\partial_{\lambda_{j}}\varepsilon_{i, j}
=
\frac{2-n}{2}
\varepsilon_{i, j}
+
O(\frac{1}{\lambda_{j}^{4}}+\varepsilon_{i, j}^{\frac{n+2}{n}})
.
\end{equation}
\end{enumerate}

\noindent Finally we derive one last
technical estimate. 
Recalling \eqref{eq:r}, 
from \eqref{r_alpha_delta_expansion} we have up to an error
 $ o(\frac{1}{\lambda^2}) $, 
 \begin{equation}\label{eq:barc0} 
\begin{split}
r_{\alpha^{i}\varphi_{i}}
= &
\alpha^{i}\alpha^{j}\int L_{g_{0}}\varphi_{i}\varphi_{j}d\mu_{g_{0}}
= 
4n(n-1)\bar c_{0}
\sum_{i}
\alpha_{i}^{2}
=
4n(n-1)\bar c_{0}\alpha^{2}
\end{split}
\end{equation}
with $ \bar c_{0}=\int_{\R^{n}}\frac{dx}{(1+r^{2})^{n}} $. 
From \eqref{intergral_sum_of_bubble_nonlinear_evaluated} instead we get 
\begin{equation*}
\begin{split}
\int K (\alpha^{i}\varphi_{i})^{p+1}d\mu_{g_{0}} 
= &
\sum_{i}
\left(
\bar c_{0}\frac{K_{i}}{\lambda_{i}^{\theta}}\alpha_{i}^{p+1}
+
\bar c_{1}\frac{K_{i}}{\lambda_{i}^{\theta}}\alpha_{i}^{\frac{2n}{n-2}}\tau
+
\bar c_{2}\frac{\Delta K_{i}}{\lambda_{i}^{2+\theta}} \alpha_{i}^{\frac{2n}{n-2}} 
\right) \\
= &
\bar c_{0} \alpha_{K, \theta}^{p+1}
+
\sum_{i}
\frac{K_{i}\alpha_{i}^{\frac{2n}{n-2}}}{\lambda_{i}^{\theta}}
\left(
\bar c_{1}\tau
+
\bar c_{2}\frac{\Delta K_{i}}{K_{i}\lambda_{i}^{2}}
\right)
\end{split}
\end{equation*} 
up to an error
 $ o(\frac{1}{\lambda^2}) $ 
and with constantsgiven by
\begin{equation}\label{eq:ovc1ovc2}
\bar c_{1}=\frac{2}{n-2}\int_{\R^{n}}\frac{\ln (1+r^{2})}{(1+r^{2})^{n}}dx, 
\; \quad \text{ and } \; \quad 
\bar c_{2}=\frac{1}{2n}\int_{\R^{n}}\frac{r^{2}}{(1+r^{2})^{n}}dx.
\end{equation}
Therefore 
\begin{equation*}
\begin{split}
\frac{r_{\alpha^{i}\varphi_{i}}}{(k_{\tau})_{\alpha^{i}\varphi_{i}}}
= & 
4n(n-1)\frac{\alpha^{2}}{\alpha_{K, \theta}^{p+1}}\\
& -
4n(n-1)
\frac{\alpha^{2}}{(\alpha_{K, \theta}^{p+1})^{2}}
\sum_{i}
\frac{K_{i}\alpha_{i}^{\frac{2n}{n-2}}}{\lambda_{i}^{\theta}}
\left(
\frac{\bar c_{1}}{\bar c_{0}}\tau
+
\frac{\bar c_{2}}{\bar c_{0}}\frac{\Delta K_{i}}{K_{i}\lambda_{i}^{2}}
\right) 
+
o(\frac{1}{\lambda^2})
\end{split}
\end{equation*}
and we conclude again from\eqref{def_refined_neighbourhood} that 
\begin{equation}\label{r/lambda_expansion}
\frac{r_{\alpha^{i}\varphi_{i}}}{(k_{\tau})_{\alpha^{i}\varphi_{i}}}
=
4n(n-1)\frac{\alpha^{2}}{\alpha_{K, \theta}^{p+1}}
(
1
-
(\frac{\bar c_{1}}{\bar c_{0}}-\frac{\tilde c_{1}}{\tilde c_{2}}\frac{\bar c_{2}}{ \bar c_{0}})\tau
)
+
o(\frac{1}{\lambda^2}).
\end{equation}

At last we display for the reader  conveniencethe equations where some dimensional 
constants appear. 

\begin{equation*}
\begin{array}{c||c|c|c|c|c|c}
&& \bar{\quad}& \hat{\quad} & \grave{\quad} & \tilde{\quad}&\check{\quad} \\ \hline
c_{0} && \eqref{eq:barc0} & & \eqref{constants_alpha_derivative} && \\ \hline
c_{1}& \text{Lemma}\; \ref{lem_interactions} & \eqref{eq:ovc1ovc2}& && \text{Theorem} \; \ref{lem_top_down_cascade}& \\ \hline 
c_{2} & \text{Lemma}\; \ref{lem_interactions} &\eqref{eq:ovc1ovc2} & &\eqref{constants_alpha_derivative} & \text{Theorem} \; \ref{lem_top_down_cascade}& \\ \hline 
c_{3} & \text{Lemma}\; \ref{lem_interactions} && \eqref{c_3} &&& \text{Theorem} \; \ref{lem_top_down_cascade} \\ \hline 
c_{4} &&&& && \text{Theorem} \; \ref{lem_top_down_cascade} \\ \hline 
d_{1} &&\eqref{intergral_sum_of_bubble_nonlinear_evaluated} && \eqref{constants_alpha_derivative} & \text{Theorem} \; \ref{lem_top_down_cascade}& \\ \hline
b_{1} & \text{Lemma}\; \ref{lem_interactions} &\eqref{intergral_sum_of_bubble_nonlinear_evaluated} && \eqref{constants_alpha_derivative}& \eqref{L_g_0_bubble_interaction} & \\ \hline 
b_{2} & \text{Lemma}\; \ref{lem_interactions} & &&&\text{Theorem} \; \ref{lem_top_down_cascade}&\\ \hline 
b_{3} & \text{Lemma}\; \ref{lem_interactions} &&&&&
\end{array}
\end{equation*}

\end{document}